\documentclass[12pt,usenames,dvipsnames,svgnames,table]{amsart}


\usepackage{subfiles}
\usepackage{mathrsfs}
\usepackage{datetime}

\usepackage{hyperref}

\usepackage{times}
\usepackage{amssymb}
\usepackage{genyoungtabtikz}

\usepackage{enumitem}

\usepackage{amsthm}
\usepackage{thmtools,thm-restate}
\usepackage[capitalise,noabbrev]{cleveref}

\declaretheorem[numberwithin=section]{lemma}
\declaretheorem[sibling=lemma]{theorem}

\declaretheorem[sibling=lemma]{corollary}
\declaretheorem[sibling=lemma]{proposition}

\declaretheorem[sibling=lemma,style=remark]{example}
\declaretheorem[sibling=lemma,style=remark]{remark}

\declaretheorem[sibling=lemma,style=remark]{definition}

\declaretheorem[sibling=lemma,style=remark,name=Inconvenient Definition,Refname={Inconvenient Definition}]%
{inconvenientdefinition}
\declaretheorem[sibling=lemma,style=remark,name=Toy Problem,Refname={Problem}]%
{ntproblem}
\declaretheorem[sibling=lemma,style=remark,name=Toy Problem,Refname={Problem}]%
{tproblem}
\declaretheorem[sibling=lemma,style=theorem,name=Definition-Theorem,Refname={Definition-Theorem},refname={Definition-Theorem}]%
{defthm}

\Crefname{inconvenientdefinition}{inconvenientdefinition}{{inconvenientdefinition}}
\Crefname{ntproblem}{ntproblem}{{ntproblem}}
\Crefname{defthm}{defthm}{{defthm}}

\usepackage{color}
\usepackage{gensymb}
\usepackage{xcolor}

\usepackage{tikz}
\usepackage{tikz-cd}
\usepackage[backgroundcolor=black!10]{todonotes}
\usetikzlibrary{matrix,arrows,calc}
\usetikzlibrary{decorations.markings}
\usetikzlibrary{patterns}
\usetikzlibrary{snakes}

\newcommand{\tikzexternaldisable}{}
\newcommand{\tikzexternalenable}{}

\usepackage{subfig}
\usepackage{xspace}
\usepackage[T1]{fontenc}
\usepackage[utf8]{inputenc}
\usepackage{bm}

\usepackage{subfiles}

\usepackage{pgfplots}

\usepackage{commath}

\def\a{\alpha}

\newcommand{\kumupartitions}{k}

\newcommand{\sconstant}{g}

\DeclareMathOperator{\Tr}{Tr}
\DeclareMathOperator{\degg}{deg}
\DeclareMathOperator{\id}{id}

\newcommand{\Sym}[1]{\mathfrak{S}(#1)}

\newcommand{\Sfunct}{\mathcal{S}}

\newcommand{\Rfunct}{\mathcal{R}}

\newcommand{\disjoint}{\bullet}

\newcommand{\E}{\mathbb{E}}

\newcommand{\Q}{\mathbb{Q}}
\newcommand{\C}{\mathbb{C}}

\newcommand{\R}{\mathbb{R}}
\newcommand{\Alg}{\mathcal{A}}
\newcommand{\AlgB}{\mathcal{B}}

\newcommand{\condKumu}[2]{\kappa_{#1}^{#2}}
\newcommand{\kumuDisjointPoint}{\kappa_{\disjoint}}

\newcommand{\AllYoung}{\mathbb{Y}}
\newcommand{\Young}[1]{\mathbb{Y}_{#1}}
\newcommand{\AllPartitions}{\mathcal{P}}
\newcommand{\partitions}[1]{\AllPartitions_{#1}}

\DeclareMathOperator{\reg}{reg}

\newcommand{\Poly}{\mathscr{P}}

\DeclareMathOperator{\Ch}{Ch}

\newcommand{\Laurent}{\Q\left[A,A^{-1}\right]}

\numberwithin{equation}{section}

\author{Maciej Dołęga}
\address{
Wydział Matematyki i Informatyki, 
Uniwersytet im.~Adama Mickiewicza, 
Collegium Mathematicum,
Umultowska 87, 
61-614 Poznań, 
Poland, \newline \indent Instytut Matematyczny,
Uniwersytet Wrocławski,  \mbox{pl.\ Grunwaldzki~2/4,} 50-384
Wrocław, Poland}
\email{maciej.dolega@amu.edu.pl}

\author{Piotr \'Sniady}
\address{
Institute of Mathematics, Polish Academy of Sciences,
\mbox{ul.~\'Sniadec\-kich 8,} \linebreak 00-956 Warszawa, Poland
} 
\email{psniady@impan.pl}

\title[Jack-deformed random Young diagrams]%
{Gaussian fluctuations \\ of Jack-deformed random Young diagrams}

\begin{document}

\begin{abstract}
We introduce a large class of random Young diagrams 
which can be regarded as a natural one-parameter deformation
of some classical Young diagram ensembles;
a deformation which is related to \emph{Jack polynomials} and \emph{Jack characters}. 
We show that each such a random Young diagram converges asymptotically to some limit shape
and that the fluctuations around the limit are asymptotically Gaussian. 
\end{abstract}

\subjclass[2010]{%
Primary   05E05; 
Secondary 
20C30,  
60K35, 
60B20, 
}

\keywords{Jack polynomials, Jack characters, random Young diagrams, random 
matrices, $\beta$-ensemble}

\maketitle

\section{Introduction}

\subsection{Random partitions\dots}
An \emph{integer partition}, called also a \emph{Young diagram}, 
is a weakly decreasing finite sequence $\lambda=(\lambda_1,\dots,\lambda_l)$ 
of positive integers $\lambda_1\geq \cdots \geq \lambda_l>0$.
We also say that $\lambda$ is a partition of $|\lambda|:=\lambda_1+\cdots+\lambda_l$.

\emph{Random partitions} 
occur in mathematics and physics in a wide variety of contexts,
in particular in the Gromov--Witten and Seiberg--Witten theories,
see the overview articles of Okounkov \cite{Okounkov2003} 
and Vershik \cite{VershikICM}. 

\subsection{\dots{}and random matrices}

Certain random partitions can be regarded as discrete counterparts of
some interesting ensembles of random matrices. 
We shall explore this link on a particular example of 
random matrices called 
\emph{$\beta$-ensembles} or \emph{$\beta$-log gases} \cite{Forrester},
i.e.~the probability distributions on $\R^n$ with the density of the form
\[ p(x_1,\dots,x_n) =  \frac{1}{Z} e^{V(x_1)+\cdots+V(x_n)}  \prod_{i<j} |x_i - x_j|^{\beta}, \]
where $V$ is some real-valued function and $Z$ is the normalization constant.
In the special cases $\beta\in\{1,2,4\}$ they describe 
the joint distribution of the eigenvalues of random matrices with natural symmetries;
the investigation of such ensembles for a generic value of $\beta$
is motivated, among others, by statistical mechanics. In this general
case the problem of computing their correlation functions heavily
relies on Jack polynomial theory \cite[Chapter 13]{Forrester}.

\subsection{Random Young diagrams related to log-gases}
Opposite to the special cases $\beta\in\{1,2,4\}$,
in the generic case of $\beta$-ensembles there seems to be no obvious unique 
way of defining their discrete counterparts and several alternative
approaches are available, see the work of Moll \cite{Moll2015}
as well as the work of
Borodin, Gorin and Guionnet \cite{BorodinGorinGuionnet2015}.
In the current paper we took another approach 
based on a deformation of the character theory of the symmetric groups.

The class of random Young diagrams considered in the current paper
as well as the classes from \cite{Moll2015,BorodinGorinGuionnet2015} are of quite distinct
flavors and it is not obvious why they should contain any elements in common,
except for the trivial example given by the Jack--Plancherel measure.
The problem of understanding the relations between these three classes
does not seem to be easy and is out of the scope of the current paper.

\subsection{Random Young diagrams and characters}

The names \emph{integer partitions} and \emph{Young diagrams} are equivalent,
but they are used in different contexts; for this reason we will use two 
symbols $\partitions{n}$ and $\Young{n}$ to denote the same object:
\emph{the set of integer partitions of $n$}, also known as
\emph{the set of Young diagrams with $n$ boxes}.
Any function on the set of partitions (or its some subset) will be referred to 
as \emph{character}.

Suppose that for a given integer $n\geq 0$ 
we are given some convenient family $(\chi_\lambda)$
of functions $\chi_\lambda\colon \partitions{n}\to\R$ which is indexed by 
$\lambda\in\Young{n}$.
We assume that $(\chi_\lambda)$ is a linear basis of the space of 
real functions on $\partitions{n}$ and that for each $\lambda\in\Young{n}$
\[  \chi_\lambda(1^n)= 1, \]
where $1^n=(1,1,\dots,1)$ is a partition of $n$ which consists of $n$ parts, each equal to $1$.
We will refer to the functions from the family $(\chi_\lambda)$ as \emph{irreducible characters}.

Our starting point is some character $\chi\colon\partitions{n}\to\R$ which fulfills
an analogous normalization 
\[  \chi(1^n)= 1. \]
We consider its expansion in the basis of irreducible characters
\begin{equation}
\label{eq:proba-linearcomb}
\chi = \sum_{\lambda\in\Young{n}} \mathbb{P}_\chi(\lambda) \ \chi_\lambda.   
\end{equation}
If the coefficients in this expansion are non-negative numbers, 
they define a probability measure $\mathbb{P}_\chi$ on the set $\Young{n}$
of Young diagrams with $n$ boxes; this probability measure is in the focus of the current paper.

\subsection{Irreducible characters of symmetric groups. Plancherel measure}

The most classical choice of the family $(\chi_\lambda)$ in \eqref{eq:proba-linearcomb}
stems from the representation theory of the symmetric groups.
For a Young diagram $\lambda\in\Young{n}$ with $n$ boxes let $\rho_\lambda\colon \Sym{n} \to M_k(\R)$ denotes 
the corresponding \emph{irreducible representation} \cite{SaganSymmetric} of the symmetric group $\Sym{n}$.
For a permutation $\pi\in \Sym{n}$ we define the value of the \emph{irreducible character $\chi_\lambda$
	of the symmetric group} 
as the fraction of the traces
\begin{equation}
\label{eq:character-irrep}
\chi_\lambda(\pi):= \frac{\Tr \rho_\lambda(\pi)}{\Tr \rho_\lambda(\id)}.   
\end{equation}
Since one can identify a permutation $\pi$ with its cycle decomposition,
it follows that the irreducible character $\chi_\lambda(\pi)$ 
is also well-defined if $\pi\in\partitions{n}$ is a partition of $n$.

For this classical choice of $(\chi_\lambda)$ several results are available.
Firstly, for a specific $\chi=\chi_{\reg}$ given by
\begin{equation}
\label{eq:regular}
\chi_{\reg}(\mu) = \begin{cases} 1 & \text{if $\mu=1^n$},\\ 0 & \text{otherwise,} \end{cases}    
\end{equation}
the corresponding probability measure $\mathbb{P}_{\chi_{\reg}}$ is the celebrated 
\emph{Plancherel measure} 
\cite{LoganShepp1977,VersikKerov1977,BaikDeiftJohansson1999,Kerov1993gaussian}
on the set of Young diagrams with $n$ boxes.
The probability measures $\mathbb{P}_\chi$ for more 
general choices of $\chi$ have been investigated, among others, in
\cite{Biane1998,Biane2001,Sniady2006c}. 

\subsection{Irreducible Jack characters}
In the current paper we will use another, more general, family $(\chi^{(\alpha)}_\lambda)$ 
of irreducible characters in \eqref{eq:proba-linearcomb}.
Our starting point is the family of \emph{Jack polynomials} $J^{(\alpha)}_\lambda$ 
\cite{Jack1970/1971}
which can be regarded as a deformation of the family of Schur polynomials;
a deformation that depends on the parameter~$\alpha>0$.
We use the normalization of Jack polynomials
from \cite[Section VI.10]{Macdonald1995}.

We expand Jack polynomial in the basis of power-sum symmetric functions:
\begin{equation} 
\label{eq:definition-theta-A}
J^{(\alpha)}_\lambda = \sum_\pi \theta^{(\alpha)}_\pi(\lambda)\ p_\pi. 
\end{equation}
The above sum runs over partitions $\pi$ such that $|\pi|=|\lambda|$. 
For a given integer $n\geq 1$, any Young diagram $\lambda\in\Young{n}$ and
any partition $\pi\in\partitions{n}$
we define the \emph{irreducible Jack character $\chi^{(\alpha)}_\lambda$} as  
\begin{equation}
\label{eq:character-Jack-unnormalized-zmiana}
\chi^{(\alpha)}_\lambda(\pi) :=  \alpha^{-\frac{\|\pi\|}{2}}\ \frac{z_\pi}{n!}\ \theta^{(\a)}_\pi(\lambda).
\end{equation}
Above,
\[ \|\pi\|:=|\pi|-\ell(\pi) \]
denotes the \emph{length} of the partition $\pi=(\pi_1,\dots,\pi_l)$, 
while $\ell(\pi)=l$ denotes its \emph{number of parts}.
Also,
the numerical factor $z_\lambda$ is defined by
\begin{equation}
\label{eq:zlambda}
z_\lambda:= \prod_{i\geq 1} m_i(\lambda)!\ i^{m_i(\lambda)},
\end{equation}
where
$m_i(\lambda):=\big|\{k: \lambda_k=i\}\big|$
is the \emph{multiplicity} of $i$ in the partition $\lambda$.

\smallskip

It is worth pointing out that in the special case $\alpha=1$ the corresponding
Jack character $\chi^{(1)}_\lambda(\pi)=\chi_\lambda(\pi)$
coincides with the irreducible character \eqref{eq:character-irrep}
of the symmetric group $\Sym{n}$, see \cite{Lassalle2009, DolegaFeray2014}.

\subsection{Probability measures $\mathbb{P}^{(\alpha)}_\chi$}
With $\chi_\lambda:=\chi_\lambda^{(\alpha)}$ given by the irreducible Jack characters, 
\eqref{eq:proba-linearcomb} takes the following more specific form
\begin{equation}
\label{eq:proba-linearcomb-A}
\chi = \sum_{\lambda\in\Young{n}} \mathbb{P}^{(\alpha)}_\chi(\lambda) \ \chi^{(\alpha)}_\lambda.
\end{equation}
If the coefficients $\mathbb{P}^{(\alpha)}_\chi(\lambda)\geq 0$ are non-negative,
we will say that $\chi$ is a \emph{reducible Jack character}.
The resulting probability measure $\mathbb{P}^{(\alpha)}_\chi$ on $\Young{n}$
is in the focus of the current paper.

\medskip

In the simplest example of $\chi:=\chi_{\reg}$ given by \eqref{eq:regular}
the corresponding probability measure $\mathbb{P}^{(\alpha)}_{\chi_{\reg}}$ turns out to be the 
celebrated \emph{Jack--Plancherel measure} \cite{Stanley1989,Fulman2004, Matsumoto2008, DolegaFeray2014,
	BorodinGorinGuionnet2015, Moll2015}
which is a one-parameter deformation of the Plancherel measure.

\subsection{Random Young diagrams related to Thoma's characters}
An additional motivation for considering this particular
class of random Young diagrams
stems from the research related to the problem of finding extremal characters 
of the infinite symmetric group $\Sym{\infty}$, solved by Thoma \cite{Thoma1964}. 
Vershik and Kerov \cite{VershikKerov1981a}
found an alternative, more conceptual proof of Thoma's result, 
which was based on the observation that 
\emph{characters of $\Sym{\infty}$
	are in a natural bijective correspondence with 
	certain sequences $(\lambda_1\nearrow \lambda_2\nearrow\cdots)$ 
	of growing random Young diagrams}.

The original Thoma's problem can be equivalently formulated as finding
all homomorphisms from the ring of symmetric functions to real numbers 
which are \emph{Schur-positive}, i.e.~which take non-negative values on all Schur polynomials. 
In this formulation the problem naturally asks for generalizations in which Schur polynomials
are replaced by another interesting family of symmetric functions.
Kerov, Okounkov and Olshanski \cite{KerovOkounkovOlshanski1998} considered the particular case of \emph{Jack polynomials}
and they proved that a direct analogue of Thoma's result holds true also in this case.
The main idea behind their proof was that the probabilistic viewpoint from the
above-mentioned work of Vershik and Kerov \cite{VershikKerov1981a} 
can be adapted to
the new setting of Jack polynomials. 
Thus, a side product of the work of Kerov, Okounkov and Olshanski is an interesting, natural class of 
random Young diagrams which fits 
into the framework which we consider in the current paper, see \cref{example:removal-of-boxes}
and the forthcoming paper \cite{DolegaSniady-examples-AFP} for more details.

\subsection{Cumulants}
\label{sec:concatenation}

For partitions $\pi_1,\dots,\pi_k$ we define their \emph{product} 
$\pi_1 \cdots \pi_k$ as their concatenation,
for example $(4,3) \cdot (5,3,1)=(5,4,3,3,1)$. In this way
the set of all partitions $\AllPartitions=\bigsqcup_{n\geq 0} \partitions{n}$ 
becomes a unital semigroup with the unit $1=\emptyset$ corresponding to the empty partition; 
we denote by $\R[\AllPartitions]$
the corresponding semigroup algebra, the elements of which are formal linear combinations
of partitions. Any character $\chi\colon\AllPartitions \to\R$ 
with $\chi(\emptyset)=1$ can be canonically extended 
to a linear map $\chi\colon\R[\AllPartitions]\to\R$ (such that 
$\chi(1)=\chi(\emptyset)=1$) which will be denoted by the same symbol.

For partitions $\pi_1,\dots,\pi_\ell$ we define their \emph{cumulant} with respect to the 
character $\chi\colon \AllPartitions\to\R$ as
a coefficient in the expansion of the logarithm of an analogue of a multidimensional
Laplace transform
\begin{equation}
\label{eq:log-laplace}
\kumupartitions^{\chi}_\ell(\pi_1,\dots,\pi_\ell):= 
\left. \frac{\partial^\ell}{\partial t_1 \cdots \partial t_\ell}
\log \chi\left( e^{t_1 \pi_1+\cdots+t_\ell \pi_\ell} \right) 
\right|_{t_1=\cdots=t_\ell=0},
\end{equation}
where the operations on the right-hand side should be understood 
in the sense of formal power series with coefficients 
either in $\R[\AllPartitions]$ or in $\R$.

In the special case when each partition 
$\pi_i=(l_i)$ consist of just one part we will use a simplified notation and we will write
\[ \kumupartitions^{\chi}_\ell(l_1,\dots,l_\ell) = \kumupartitions^{\chi}_\ell\big( (l_1),\dots,(l_\ell) \big).
\]
For example,
\begin{align*}
\kumupartitions^{\chi}_1(l_1) &= \chi(l_1), \\   
\kumupartitions^{\chi}_2(l_1,l_2) &= \chi(l_1,l_2)- \chi(l_1) \chi(l_2).
\end{align*}

\subsection{Asymptotics}

In the current paper we consider asymptotic problems which correspond to the limit 
when the number of boxes  $n\to\infty $ of the random Young diagrams tends to infinity.
This corresponds to considering a \emph{sequence} $(\chi_n)$ of reducible characters
$\chi_n\colon \partitions{n}\to\R$ and the resulting \emph{sequence}
$(\mathbb{P}^{(\alpha)}_{\chi_n})$ of probability measures on $\Young{n}$.

We also allow that the deformation parameter $\alpha(n)$ depends on $n$;
in order to make the notation light we will 
make this dependence implicit and write shortly $\alpha=\alpha(n)$.

\subsection{Hypothesis: asymptotics of $\alpha$}
\label{sec:hypothesis-double-scaling}
All results of the current paper will be based on the assumption 
that $\alpha=\alpha(n)$ is a sequence of positive numbers such that
\begin{equation} 
\label{eq:double-scaling-refined}
\frac{-\sqrt{\alpha}+\frac{1}{\sqrt{\alpha}}}{\sqrt{n}} = 
\sconstant + \frac{\sconstant'}{\sqrt{n}} + o\left( \frac{1}{\sqrt{n}} \right)
\end{equation}
holds true for $n\to\infty$ for some constants $\sconstant$, $\sconstant'$.

Note that the most important case when $\alpha$ is constant fits into this framework with $\sconstant=0$.
The generic case $\sconstant\neq 0$ will be referred to as 
\emph{double scaling limit}.

\subsection{Hypothesis: approximate factorization of characters}
\label{sec:partitions}

In the following we will use the following convention.
If $\chi_n\colon\partitions{n}\to\R$ is a function on the set of partitions of $n$, 
we will extend its domain to 
the set 
\[ \partitions{\leq n}:=\bigsqcup_{0\leq k\leq n} \partitions{k} \] 
of partitions of smaller numbers 
by setting
\begin{equation}
\label{eq:extended-character}
\chi_n(\pi):= \chi_n( \pi,1^{n-|\pi|}) \qquad \text{for } |\pi|\leq n,    
\end{equation}
i.e.~we extend the partition $\pi$ by adding an appropriate number of parts, each equal to $1$.

In this way the cumulant
$\kumupartitions^{\chi_n}_\ell (l_1,\dots,l_\ell)$ 
is well defined if $n\geq l_1+\cdots+l_\ell$
is large enough.

\begin{definition} 
	\label{def:approx-factorization-charactersA}
	Assume that for each integer $n\geq 1$ we are given a function $\chi_n\colon\partitions{n}\to\R$.
	We say that the sequence $(\chi_n)$ has \emph{approximate factorization property} 
	\cite{Sniady2006c} if
	for each integer $\ell\geq 1$ and all integers $l_1,\dots,l_\ell\geq 2$ the limit
	\begin{equation}
	\label{eq:aprox-fact-property}
	\lim_{n\to\infty} \kumupartitions^{\chi_n}_\ell (l_1,\dots,l_\ell)\ n^{\frac{l_1+\cdots+l_\ell+\ell-2}{2}} 
	\end{equation}
	exists and is finite.
	
	\smallskip
	
	We say that the sequence $(\chi_n)$ has \emph{enhanced approximate factorization property}
	if, additionally, in the case $\ell=1$  
	the rate of convergence in \eqref{eq:aprox-fact-property}
	takes the following explicit form:
	for each $l\geq 2$ there exist some constants $a_{l+1},b_{l+1}\in\R$ such that
	\begin{equation}
	\label{eq:refined-asymptotics-characters}
	\kumupartitions^{\chi_n}_1(l) \ n^{\frac{l-1}{2}} =  \chi_n(l) \ n^{\frac{l-1}{2}}  =
	a_{l+1}+ \frac{b_{l+1}+o(1)}{\sqrt{n}} \qquad \text{for }n\to\infty   
	\end{equation}
	and 
	\begin{equation}
	\label{eq:what-is-m}
	\sup_{l\geq 2} \frac{\sqrt[l]{|a_l|}}{l^m} < \infty,
	\qquad
	\text{where }
	m = \begin{cases}2 &\text{for } g \neq 0,\\
	1 & \text{for } g=0,\end{cases}  
	\end{equation}
	where $g$ is given by \eqref{eq:double-scaling-refined}.
\end{definition}

\begin{example}
	\label{example:plancherel}
	It is easy to check that for $\chi_{\reg}$ from 
	\eqref{eq:regular} all higher cumulants vanish:
	\[ \kumupartitions^{\chi_{\reg}}_\ell (l_1,\dots,l_\ell) = 0 \qquad \text{for $\ell\geq 2$}\]
	and the first cumulant takes a particularly simple form
	\[ \kumupartitions^{\chi_{\reg}}_1 (l) = \chi_{\reg} (l) =
	\begin{cases} 
	1 & \text{if } l=1, \\ 
	0 & \text{otherwise.}
	\end{cases}
	\]
	It follows that that the sequence $(\chi_n)$ for which $\chi_n:=\chi_{\reg}$ fulfills the enhanced approximate
	factorization property.
\end{example}

\subsection{Drawing Young diagrams}
\subsubsection{Anisotropic Young diagrams}
\label{sec:anisotropic-young}

\begin{figure}[tbp]
	
	\begin{center}
		\begin{tikzpicture}[scale=0.8]

		\begin{scope}
		\begin{scope}
		\fill[fill=blue!10] (0,0) -- (4,0) -- (4,1) -- (3,1) -- (3,2) -- (1,2) -- (1,3) -- (0,3) -- cycle;
		\clip (0,0) -- (4,0) -- (4,1) -- (3,1) -- (3,2) -- (1,2) -- (1,3) -- (0,3) -- cycle;
		\draw[dashed] (0,0) grid (5,5);
		\end{scope}
		
		\draw[->,thick] (0,0) -- (6,0) node[anchor=west]{{{$x$}}};
		\foreach \x in {1, 2, 3, 4, 5}
		{ \draw (\x, -2pt) node[anchor=north] {{\tiny{$\x$}}} -- (\x, 2pt); }
		
		\draw[->,thick] (0,0) -- (0,6) node[anchor=south] {{{$y$}}};
		\foreach \y in {1, 2, 3, 4, 5}
		{ \draw (-2pt,\y) node[anchor=east] {{\tiny{$\y$}}} -- (2pt,\y); }

		\draw[ultra thick,draw=blue] (5.5,0) -- (4,0) -- (4,1) -- (3,1) -- (3,2) -- (1,2) -- (1,3) -- (0,3) -- (0,5.5) ;
		
		\end{scope}
		
		\begin{scope}[xshift=250]
		
		\begin{scope}[xscale=0.5,yscale=1.5]
		
		\begin{scope}
		\fill[fill=blue!10] (0,0) -- (4,0) -- (4,1) -- (3,1) -- (3,2) -- (1,2) -- (1,3) -- (0,3) -- cycle;
		\clip (0,0) -- (4,0) -- (4,1) -- (3,1) -- (3,2) -- (1,2) -- (1,3) -- (0,3) -- cycle;
		\draw[dashed] (0,0) grid (5,5);
		\end{scope}
		
		\end{scope}

		\draw[->,thick] (0,0) -- (4,0) node[anchor=west]{{{$x$}}};
		\foreach \x in {1, 2, 3}
		{ \draw (\x, -2pt) node[anchor=north] {{\tiny{$\x$}}} -- (\x, 2pt); }
		
		\draw[->,thick] (0,0) -- (0,6) node[anchor=south] {{{$y$}}};
		\foreach \y in {1, 2, 3, 4, 5}
		{ \draw (-2pt,\y) node[anchor=east] {{\tiny{$\y$}}} -- (2pt,\y); }

		\begin{scope}[xscale=0.5,yscale=1.5]
		
		\draw[ultra thick,draw=blue] (6.5,0) -- (4,0) -- (4,1) -- (3,1) -- (3,2) -- (1,2) -- (1,3) -- (0,3) -- (0,3.5) ;
		
		\end{scope}
		\end{scope}

		\end{tikzpicture}
	\end{center}
	
	\caption{A Young diagram $\lambda=(4,3,1)$ shown in the French convention (left)
		and a generalized Young diagram $T_{\frac{1}{2},\frac{3}{2}} \lambda$ (right)
		obtained by an anisotropic scaling. The dashed lines indicate individual boxes.}
	\label{fig:stretch}
\end{figure}
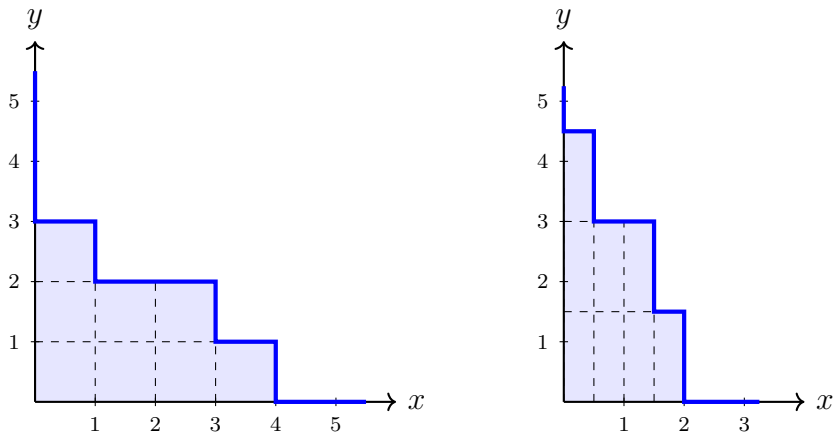

The usual way of drawing Young diagrams is to represent each individual box as a unit square,
see \cref{fig:stretch} (left).
However, when dealing with random Young diagrams related to Jack polynomials
it is more convenient to represent each box as a rectangle with width $w>0$ and height $h>0$ such that
\begin{equation}
\label{eq:aspect-ratio}
\frac{w}{h} = \alpha.   
\end{equation}
A Young diagram viewed like this becomes a polygon contained in the uppper-right quaterplane
which will be denoted by $T_{w,h}\lambda$, see \cref{fig:stretch} (right).
We will refer to such polygons as \emph{anisotropic Young diagrams};
they have been first considered by Kerov \cite{Kerov2000}.

\subsubsection{Russian convention. Profile of a Young diagram}
We draw (a\-ni\-so\-tro\-pic) Young diagrams on the plane 
with the usual Cartesian coordinates $(x,y)$.
However, it is also convenient to use the \emph{Russian} coordinate system 
$(u,v)$ given by
\[ u = x-y, \qquad v = x+y. \]
This new coordinate system gives rise to the \emph{Russian convention} 
for drawing (anisotropic) Young diagrams, see \cref{fig:french-fries}.

The boundary of a Young diagram $\lambda$ 
drawn in the Russian convention
(the solid zigzag line on the right-hand side of \cref{fig:french-fries})
is a graph of a function $\omega_\lambda$ 
which will be called the \emph{profile} of $\lambda$.
If the Young diagram is replaced by an anisotropic
Young diagram $T_{w,h}\lambda$
we define in an analogous way its profile $\omega_{T_{w,h}\lambda}$.

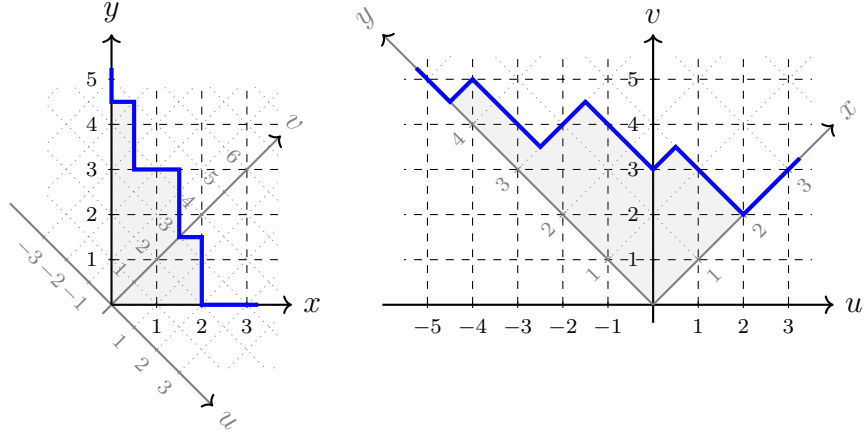
\begin{figure}[tbp]
	
	\begin{center}
		\begin{tikzpicture}[scale=0.6]
		
		\begin{scope}[scale=1/sqrt(2),rotate=-45,draw=gray]
		
		\begin{scope}
		\clip[rotate=45] (-2,-2) rectangle (5.2,6.8);
		\draw[thin, dotted, draw=gray] (-10,0) grid (10,10);
		\begin{scope}[rotate=45,draw=black,scale=sqrt(2)]
		\draw[thin, dashed] (0,0) grid (15,15);
		\end{scope}
		\end{scope}
		
		\draw[->,thick] (-4.5,0) -- (4.5,0) node[anchor=west,rotate=-45]{\textcolor{gray}{$u$}};
		\foreach \z in { -3, -2, -1, 1, 2, 3}
		{ \draw (\z, -2pt) node[anchor=north,rotate=-45] {\textcolor{gray}{\tiny{$\z$}}} -- (\z, 2pt); }
		
		\draw[->,thick] (0,-0.4) -- (0,7.5) node[anchor=south,rotate=-45]{\textcolor{gray}{$v$}};
		
		\foreach \t in {1, 2, 3, 4, 5, 6}
		{ \draw (-2pt,\t) node[anchor=east,rotate=-45] {\textcolor{gray}{\tiny{$\t$}}} -- (2pt,\t); }
		
		\begin{scope}[draw=black,rotate=45,scale=sqrt(2)]
		
		\draw[->,thick] (0,0) -- (4,0) node[anchor=west]{{{$x$}}};
		\foreach \x in {1, 2, 3}
		{ \draw (\x, -2pt) node[anchor=north] {{\tiny{$\x$}}} -- (\x, 2pt); }
		
		\draw[->,thick] (0,0) -- (0,6) node[anchor=south] {{{$y$}}};
		\foreach \y in {1, 2, 3, 4, 5}
		{ \draw (-2pt,\y) node[anchor=east] {{\tiny{$\y$}}} -- (2pt,\y); }
		
		\begin{scope}[xscale=0.5,yscale=1.5] 
		\draw[ultra thick,draw=blue] (6.5,0) -- (4,0) -- (4,1) -- (3,1) -- (3,2) -- (1,2) -- (1,3) -- (0,3) -- (0,3.5) ;
		\fill[fill=gray,opacity=0.1] (4,0) -- (4,1) -- (3,1) -- (3,2) -- (1,2) -- (1,3) -- (0,3) -- (0,0) -- cycle ;
		\end{scope}
		\end{scope}
		
		\end{scope}
		
		\begin{scope}[xshift=12cm, scale=1]
		\begin{scope}
		\clip (-5.5,0) rectangle (3.5,5.5);
		\draw[thin, dashed] (-6,0) grid (6,6);
		\begin{scope}[rotate=45,draw=gray,scale=sqrt(2)]
		\clip (0,0) rectangle (4.5,5.5);
		\draw[thin, dotted] (0,0) grid (6,6);
		\end{scope}
		\end{scope}
		
		\draw[->,thick] (-6,0) -- (4,0) node[anchor=west]{$u$};
		\foreach \z in {-5, -4, -3, -2, -1, 1, 2, 3}
		{ \draw (\z, -2pt) node[anchor=north] {\tiny{$\z$}} -- (\z, 2pt); }
		
		\draw[->,thick] (0,-0.4) -- (0,6) node[anchor=south]{$v$};
		\foreach \t in {1, 2, 3, 4, 5}
		{ \draw (-2pt,\t) node[anchor=east] {\tiny{$\t$}} -- (2pt,\t); }

		\begin{scope}[draw=gray,rotate=45,scale=sqrt(2)]
		
		\draw[->,thick] (0,0) -- (4,0) node[anchor=west,rotate=45] {\textcolor{gray}{{$x$}}};
		\foreach \x in {1, 2, 3}
		{ \draw (\x, -2pt) node[anchor=north,rotate=45] {\textcolor{gray}{\tiny{$\x$}}} -- (\x, 2pt); }
		
		\draw[->,thick] (0,0) -- (0,6) node[anchor=south,rotate=45] {\textcolor{gray}{{$y$}}};
		\foreach \y in {1, 2, 3, 4}
		{ \draw (-2pt,\y) node[anchor=east,rotate=45] {\textcolor{gray}{\tiny{$\y$}}} -- (2pt,\y); }
		
		\begin{scope}[xscale=0.5,yscale=1.5] 
		\draw[ultra thick,draw=blue] (6.5,0) -- (4,0) -- (4,1) -- (3,1) -- (3,2) -- (1,2) -- (1,3) -- (0,3) -- (0,3.5) ;
		\fill[fill=gray,opacity=0.1] (4,0) -- (4,1) -- (3,1) -- (3,2) -- (1,2) -- (1,3) -- (0,3) -- (0,0) -- cycle ;
		\end{scope}
		
		\end{scope}
		
		\end{scope}
		
		\end{tikzpicture}
	\end{center}
	
	\caption{
		The anisotropic Young diagram from \cref{fig:stretch}
		shown in the French and Russian conventions. 
		The solid line represents the \emph{profile} of the Young diagram. 
		The coordinate system $(u,v)$ corresponding to the Russian convention 
		and the coordinate system $(x,y)$ corresponding to the French convention are shown.}
	
	\label{fig:french-fries}
\end{figure}

\subsection{The first main result: Law of Large Numbers}

The following theorem is a generalization of the results of Biane \cite{Biane2001}
who considered the special case $\alpha=1$ and the corresponding representations of the symmetric groups.

\begin{theorem}[Law of large numbers]
	\label{theo:lln}
	Assume that $\alpha=\alpha(n)$ is such that \eqref{eq:double-scaling-refined} holds true.
	Assume that $\chi_n\colon\partitions{n}\to\R$ is a reducible Jack character; 
	we denote by $\lambda_n$ the corresponding random Young diagram with $n$ boxes
	distributed according to $\mathbb{P}^{(\alpha)}_{\chi_n}$.
	We assume also that the sequence $(\chi_n)$ of characters fulfills the enhanced approximate factorization property.
	
	Then there exists some deterministic function $\omega_{\Lambda_\infty}\colon\R\to\R$ 
	with the property that 
	\begin{equation}
	\label{eq:limit}
	\lim_{n\to\infty} \omega_{\Lambda_n} = \omega_{\Lambda_\infty},
	\end{equation}
	where 
	\begin{equation}
	\label{eq:Lambda}
	\Lambda_n:= T_{\sqrt{\frac{\alpha}{n}},\sqrt{\frac{1}{\alpha n}}} \lambda_n    
	\end{equation}
	%
	%
	and the convergence in \eqref{eq:limit} holds true with respect to the supremum norm, in probability.
	In other words, for each $\epsilon>0$
	\[ \lim_{n\to\infty} \mathbb{P}\big( \|\omega_{\Lambda_n}-\omega_{\Lambda_\infty}   \|_\infty>
	\epsilon \big) =0 .\]
\end{theorem}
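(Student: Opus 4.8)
The plan is to reduce the uniform convergence of the profiles to the convergence, in probability, of a countable family of real-valued observables of the random diagram, and then to translate this back into an analytic statement about shapes. The natural observables are the normalized Jack characters $\Ch^{(\a)}_k(\la_n)$ for $k\geq 2$, regarded as polynomial functions on Young diagrams: they form a linear basis of the relevant algebra of $\alpha$-shifted symmetric functions and are dual to the family $(\chi^{(\a)}_\la)$. The crucial structural input is that this duality upgrades from expectations to cumulants, namely that the cumulants of the random variables $\Ch^{(\a)}_{\pi_i}(\la_n)$ with respect to $\PPa_{\chi_n}$ agree, up to explicit combinatorial factors, with the character cumulants $\kumupartitions^{\chi_n}_\ell(\pi_1,\dots,\pi_\ell)$ from \eqref{eq:log-laplace}. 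This is the bridge between the hypotheses on $(\chi_n)$ and the probabilistic behaviour of $\la_n$.

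First I would fix the scaling. Since the box area of $\Lambda_n$ equals $\sqrt{\tfrac{\a}{n}}\cdot\sqrt{\tfrac{1}{\a n}}=\tfrac{1}{n}$ and its aspect ratio is $\a$, the rescaled diagram $\Lambda_n$ has unit area, and one expects $\Ch^{(\a)}_k(\la_n)$ to be of order $n^{(k+1)/2}$. Using the approximate factorization property I would show two things: \emph{(i)} the rescaled expectations $n^{-(k+1)/2}\,\E\,\Ch^{(\a)}_k(\la_n)$ converge to deterministic limits $c_k$, where the leading constants $a_{l+1}$ from \eqref{eq:refined-asymptotics-characters} together with the limit $\big(-\sqrt{\a}+\tfrac{1}{\sqrt{\a}}\big)/\sqrt{n}\to \sconstant$ of \eqref{eq:double-scaling-refined} determine these values; and \emph{(ii)} the rescaled variances $n^{-(k+1)}\,\operatorname{Var}\Ch^{(\a)}_k(\la_n)$, and more generally all higher rescaled cumulants, tend to $0$, which is exactly the content of \eqref{eq:aprox-fact-property} for $\ell\geq 2$. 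Chebyshev's inequality then gives, for each fixed $k$, the convergence in probability $n^{-(k+1)/2}\,\Ch^{(\a)}_k(\la_n)\to c_k$.

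Next I would transfer this to the free cumulants. Because the change of basis between the Jack characters $\Ch^{(\a)}_k$ and the free cumulants $R_2,R_3,\dots$ of the transition measure is triangular, with lower-order corrections governed by powers of $\gamma=-\sqrt{\a}+\tfrac{1}{\sqrt{\a}}$ (which is itself of order $\sqrt{n}$, controlled by \eqref{eq:double-scaling-refined}), each rescaled free cumulant $R_{k+1}(\Lambda_n)$ is a polynomial in the convergent quantities above and hence also converges in probability to a deterministic limit $r_{k+1}$. The sequence $(r_{k+1})$ is then the free-cumulant sequence of the candidate limit shape.

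Finally comes the analytic core. I would invoke the Kerov--Ivanov--Olshanski correspondence between profiles, transition measures, and their moment (equivalently free-cumulant) sequences: convergence of all $R_{k+1}(\Lambda_n)\to r_{k+1}$ corresponds to convergence of all moments of the transition measures $\mu_{\Lambda_n}$, and to upgrade this to weak convergence of measures, and thence to uniform convergence of profiles, one must know that the limiting moment sequence is that of a \emph{unique, compactly supported} probability measure $\mu_\infty$ and that the supports of the $\mu_{\Lambda_n}$ are uniformly bounded. This is precisely where the growth condition \eqref{eq:what-is-m} enters: the bound $\sup_{l}\sqrt[l]{|a_l|}/l^m<\infty$ provides the Carleman-type control guaranteeing that the $(r_{k+1})$ determine a compactly supported measure $\mu_\infty$ (whose profile is the sought $\omega_{\Lambda_\infty}$) and furnishing a priori support bounds that make moment convergence equivalent to supremum-norm convergence. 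I expect this final step -- converting moment convergence into uniform convergence of profiles while keeping the supports uniformly bounded, and checking that the growth condition survives the double-scaling deformation -- to be the main obstacle, whereas the duality and the cumulant estimates are comparatively mechanical once the dictionary between characters and free cumulants is set up.
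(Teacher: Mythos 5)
Your first two stages (character observables, cumulant decay, Chebyshev, transfer to free cumulants via the $\gamma$-triangular change of basis) match the paper's route through \cref{def:approx-factorization-charactersB}, but the final analytic step contains a genuine gap. The growth condition \eqref{eq:what-is-m} does \emph{not} give you a compactly supported limit measure, nor uniform support bounds on the measures attached to $\Lambda_n$: compact support would require $\sqrt[l]{|r_l|}=O(1)$, whereas \eqref{eq:what-is-m} only permits $\sqrt[l]{|m_l|}=O(l^m)$ with $m\in\{1,2\}$, i.e.\ Carleman-type growth; and the unit-area normalization of $\Lambda_n$ only forces the hyperbola bound $x_0y_0\le 1$ (hence the density decay \eqref{eq:bounds-on-density}), while the first row of $\Lambda_n$ can have rescaled length of order $\sqrt{\alpha n}$, so the supports are not uniformly bounded. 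Consequently the Kerov--Ivanov--Olshanski dictionary in its compactly-supported form, which is what you invoke to convert moment convergence into supremum-norm convergence of profiles, is not available here. Worse, in the double scaling regime $\sconstant\neq 0$ one has $m=2$, and the bound $|m_l|\le C^l l^{2l}$ is too weak even for Hamburger determinacy on the whole line; moment convergence alone then does not pin down a unique limit, so your argument would stall exactly where uniqueness is needed.

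The paper closes this gap with two ingredients you are missing. First, when $\sconstant\neq 0$ it proves a \emph{one-sided} support bound by a geometric argument: the box heights of $\Lambda_n$ are bounded below by a constant $c>0$, so by area comparison the bottom row has length at most $1/c$ and the support of the associated measure lies in $(-\infty,1/c]$ (or $[-1/c,\infty)$); this converts the problem into a Stieltjes moment problem, for which $l^{2l}$ growth does satisfy Carleman's condition (\cref{lem:Carleman}, after \cref{prop:CumulantsGrowSlow} and \cref{lem:small-free} establish the growth of $r_l$ and $s_l$ --- the hard combinatorial estimate via the expander formula of \cref{lem:top-degree-jack-character}, which your sketch treats as mechanical). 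Second, instead of Kerov's transition measure (which the paper explicitly avoids) it uses the \emph{area measure} $P_\Lambda$ with density $f_\Lambda=(\omega_\Lambda-|u|)/2$: since this density is $1$-Lipschitz and literally encodes the profile, weak convergence upgrades to locally uniform convergence of densities by a mollification argument, and the a priori decay \eqref{eq:bounds-on-density} handles the tails, yielding uniform convergence on all of $\R$ without any compactness. If you want to salvage your transition-measure route you would need a continuity statement for the Markov--Krein correspondence valid for non-compactly supported, merely Carleman-determinate measures, which is precisely the delicate point the paper's choice of $P_\Lambda$ is designed to bypass.
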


\begin{remark}
The concrete formula for the profile $\omega_{\Lambda_\infty}$ may be obtained
by computing the corresponding $R$-transform and Cauchy transform, see
for example \cite[Theorem 3]{Biane2001}.
\end{remark}

The proof is postponed to \cref{sec:proof-of-LLN}.

\subsection{The second main result: Central Limit Theorem}
\label{sec:clt-simple}
We keep the notations from \cref{theo:lln}.
The difference 
\begin{equation}
\label{eq:fluctuations-around-infty}
\Delta_n:= \sqrt{n} \left( \omega_{\Lambda_n}- \omega_{\Lambda_\infty} \right).
\end{equation}
is a random function on the real line which quantifies 
the (suitably rescaled) discrepancy
between the shape of the random (anisotropic) Young diagram $\Lambda_n$
and the limit shape.
We will regard ${\Delta}_n$
as a Schwartz  distribution on the real line $\R$ or,
more precisely, as a \emph{random vector} from this space.

The following result is a generalization of Kerov's CLT 
\cite{Kerov1993gaussian,IvanovOlshanski2002} 
which concerned Plancherel measure in the special case $\alpha=1$
as well as a generalization of its extension by the first-named author and F\'eray \cite{DolegaFeray2014}
for the generic \emph{fixed} value of $\alpha>0$.
Indeed, \cref{example:plancherel} shows that the assumptions of the
following theorem are fulfilled for $\chi_n:=\chi_{\reg}$,
thus CLT holds for Jack--Plancherel measure \emph{in a wider generality, when
	$\alpha=\alpha(n)$ may vary with $n$}.

On the other hand the following result is also a generalization of the results of the second-named author
\cite{Sniady2006c} who considered a setup similar to the one below in the special case $\alpha=1$.

\begin{theorem}[Central Limit Theorem]
	\label{theo:clt}
	We keep the assumptions and the notations from \cref{theo:lln}.
	
	Then for $n\to\infty$ the random vector
	${\Delta}_n$ converges in distribution to some (non-centered) Gaussian random vector $\Delta_\infty$
	valued in the space $(\R[x])'$ of distributions, the dual space to polynomials.
\end{theorem}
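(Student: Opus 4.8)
The plan is to deduce the statement from a multivariate central limit theorem for a family of polynomial functions of the random Young diagram, and then to invoke the method of cumulants. Since $\Delta_n$ takes values in the dual space $(\R[x])'$, convergence in distribution of the random vector is equivalent to the joint convergence of the real-valued pairings $\langle \Delta_n, x^j\rangle$ over every finite family of exponents $j$; by linearity it suffices to treat monomials. The first step is therefore to rewrite each such pairing, up to a negligible error, in terms of the polynomial observables of $\lambda_n$. Here I would use the algebra of $\alpha$-polynomial functions on Young diagrams generated by the Jack characters $\Ch^{(\alpha)}_\mu$ (equivalently, by the free cumulants of the transition measure), together with the standard dictionary between the profile $\omega_{\Lambda_n}$ and these observables. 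After subtracting the limit shape $\omega_{\Lambda_\infty}$ supplied by \cref{theo:lln} and multiplying by $\sqrt{n}$, the pairing $\langle\Delta_n,x^j\rangle$ becomes, to leading order, a fixed linear combination of the rescaled fluctuations of the $\Ch^{(\alpha)}_\mu(\lambda_n)$, the rescaling being dictated by the anisotropic scaling in \eqref{eq:Lambda} and by the gradation of the observable algebra.

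The second step is to match the joint moments and cumulants of these observables with the combinatorial data of $\chi_n$. The defining relation \eqref{eq:proba-linearcomb-A} is a duality which gives, up to the explicit normalising factor of \eqref{eq:character-Jack-unnormalized-zmiana}, the identity $\E^{(\alpha)}_{\chi_n}\!\big[\Ch^{(\alpha)}_\mu\big] \propto \chi_n(\mu)$. Combined with the product structure of the observable algebra, in which $\Ch^{(\alpha)}_{\mu_1}\cdots\Ch^{(\alpha)}_{\mu_\ell}$ equals $\Ch^{(\alpha)}_{\mu_1\cdots\mu_\ell}$ modulo terms of strictly lower degree, this lets me express the classical joint cumulant of $\Ch^{(\alpha)}_{\mu_1}(\lambda_n),\dots,\Ch^{(\alpha)}_{\mu_\ell}(\lambda_n)$ through the character-cumulants $\kumupartitions^{\chi_n}_\ell$ of \eqref{eq:log-laplace}, again up to lower-order corrections. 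The enhanced approximate factorization property then performs the analytic work: the scaling in \eqref{eq:aprox-fact-property} forces the rescaled cumulants of order $\ell\geq 3$ to vanish in the limit, while those of order $2$ converge to a finite bilinear form that will be the covariance of $\Delta_\infty$. The mean of $\Delta_\infty$ is produced by the first cumulant: the refined asymptotics \eqref{eq:refined-asymptotics-characters} supply the $n^{-1/2}$ correction $b_{l+1}$, while the double-scaling expansion \eqref{eq:double-scaling-refined} of $\alpha$ contributes a further deterministic shift through the factor $\alpha^{-\|\mu\|/2}$, so that the limiting Gaussian is genuinely non-centred.

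The final step is the method of cumulants itself: a sequence of random vectors whose joint cumulants of order at least $3$ tend to zero and whose first two cumulants converge determines a Gaussian limit with the corresponding mean and covariance, and the resulting convergence of all finite-dimensional marginals yields convergence in distribution in $(\R[x])'$. The hardest part, I expect, will be the bookkeeping of the gradation of the Jack characters in the double-scaling regime. Because the observables $\Ch^{(\alpha)}_\mu$ are not homogeneous and because $\alpha=\alpha(n)$ varies with $n$, one must track precisely which lower-degree terms --- in the product formula and in the expansion of the profile in observables --- survive at the order $n^{-1/2}$ relevant for the fluctuations; these feed into both the covariance and the deterministic shift, and misjudging their orders would corrupt the limit. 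The growth hypothesis \eqref{eq:what-is-m}, with its distinction between $m=1$ and $m=2$, is exactly what guarantees that the resulting covariance form and mean functional extend to continuous functionals on $\R[x]$, so that $\Delta_\infty$ is a well-defined Gaussian random vector in the space of distributions.
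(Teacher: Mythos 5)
Your proposal follows essentially the same route as the paper's proof: reduce convergence in $(\R[x])'$ to joint convergence of polynomial pairings (\cref{remark:theo:clt}), control all joint cumulants through the equivalent characterizations of (enhanced) approximate factorization in \cref{def:approx-factorization-charactersB}, and conclude by the method of cumulants, with the non-centered mean produced by the $n^{-1/2}$ corrections from \eqref{eq:refined-asymptotics-characters} and \eqref{eq:double-scaling-refined}. Two small points of divergence, neither a gap: the paper avoids your ``up to negligible error'' reduction entirely, since $\langle\Delta_n,u^{k-2}\rangle$ is \emph{exactly} (up to the scalar in \eqref{eq:my-random-variables-2} and a deterministic shift killed by translation-invariance of cumulants of order $\ell\geq 2$) the rescaled functional $n^{-\frac{k-1}{2}}\Sfunct_k(\lambda_n)\in\Poly$, so condition \ref{item:cumulants-classic-polynomial} applies directly and part \ref{item:equivalent-enhanced} with \eqref{eq:what-is-b-bis} supplies the limiting mean $b''_k$; and your attribution of the growth hypothesis \eqref{eq:what-is-m} to continuity of the limit covariance on $\R[x]$ is off --- in the paper that hypothesis acts upstream, in the Carleman moment-determinacy arguments of the Law of Large Numbers (\cref{theo:lln}) which make $\omega_{\Lambda_\infty}$, and hence $\Delta_n$ itself, well defined, while the CLT step needs no such continuity because convergence in $(\R[x])'$ is taken in the finite-dimensional sense.
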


The above statement about the convergence of the random vector ${\Delta}_n$
	should be understood in a rather specific sense, 
	formulated with help of some suitable test functions. Namely, for each finite collection of polynomials $f_1,\dots,f_k \in \C[x]$ we claim that the joint distribution of the random variables
\[ \langle \Delta_n,f_i \rangle := \int_\R \Delta_n(x)f_i(x) \dif x \qquad \text{for }i\in\{1,\dots,k\} \]
converges to the Gaussian distribution.

Informally speaking: asymptotically, for $n\to\infty$
\[ \omega_{\Lambda_n} \approx \omega_{\Lambda_\infty}+\frac{1}{\sqrt{n}} \Delta_\infty 
\]
where $\omega_{\Lambda_\infty}$ is a deterministic curve and $\Delta_\infty$
is a Gaussian process.

\begin{remark}
	\label{remark:theo:clt} 
	In order to prove \cref{theo:clt} 
	it is enough to show that
	the joint distribution of any finite family of random variables
	$(Y_k)_{k\geq 2}$ converges as $n\to\infty$ to a (non-centered) Gaussian distribution, where
	\begin{equation}
	\label{eq:my-random-variables-2}   
	Y_k:= \frac{k-1}{2} \int  u^{k-2} \ \Delta_n(u) \dif u  
	\end{equation}
	is (up to a simple scalar factor)
	the value of the Schwartz distribution $\Delta_n$ evaluated on a suitable
	polynomial test function.
\end{remark}

The proof is postponed to \cref{sec:proof-of-CLTCLT}.

\subsection{Example}
\label{example:removal-of-boxes}

Let $\alpha>0$ be a fixed positive integer.
For a given integer $i>0$ consider the rectangular Young diagram
\[  (i^{\alpha i}):=(\underbrace{i,\dots,i}_{\text{$\alpha i$ times}})\]
with $n':=\alpha i^2$ boxes. We will assume that $n'$ is an even number.
The special case $\alpha=1$ was considered already by Biane
\cite[Figures 1--3]{Biane1998}. 

Using a random iterative procedure introduced by Kerov
\cite{Kerov1996boundaryYounglattice}
which is an inverse of the Plancherel growth process 
and which will be presented in detail in
a forthcoming paper \cite{DolegaSniady-examples-AFP} 
we remove half of the boxes from the rectangular Young diagram 
$(i^{\alpha i})$; the resulting random Young diagram with $n:=\frac{1}{2} n'$ boxes
will be denoted by $\lambda_n$. We will use the same transformation 
\[T:=T_{\sqrt{\frac{\alpha}{n}},\sqrt{\frac{1}{\alpha n}}}\] in order to scale both 
the original rectangular Young diagram $(i^{\alpha i})$ as well as the resulting
random Young diagram $\lambda_n$.

We notice that the anisotropic Young diagram $T(i^{\alpha i})$ is a square,
see \cref{fig:biane-lln-square};
we shall denote it by $S$. 
As we shall see in \cite{DolegaSniady-examples-AFP}, 
the distribution of the random Young diagram $\lambda_n$ 
can be equivalently formulated via \eqref{eq:proba-linearcomb-A} 
in terms of the corresponding 
natural reducible Jack character and 
\cref{theo:lln} as well as \cref{theo:clt} are applicable. 
Thus the sequence of random anisotropic Young diagrams 
$\Lambda_n=T\lambda_n$ converges to some deterministic limit $\Lambda_\infty$.
Not very surprisingly, this limit $\Lambda_\infty$ turns out to be the bottom half 
of the square $S$, see \cref{fig:biane-lln-square}.

\begin{figure}
	\centering
	\begin{tikzpicture}[scale=3]
	
	\begin{scope}[draw=gray,rotate=45,scale=sqrt(2)]
	
	\draw[fill=green!10,draw=green] (0,0) rectangle (1,1);
	\draw[draw=none,pattern=north west lines, pattern color=blue,opacity=0.3] (0,0) -- (1,0) -- (0,1) -- cycle;
	\draw[blue,dashed,ultra thick] (1.2,0) -- (1,0) -- (0,1) -- (0,1.2);

	\draw[->,thick] (0,0) -- (1.5,0) node[anchor=west,rotate=45] {\textcolor{gray}{{$x$}}};
	\foreach \x in {1}
	{ \draw[thick] (\x, -2pt) node[anchor=north,rotate=45] {\textcolor{gray}{\tiny{$\sqrt{2}$}}} -- (\x, 2pt); }
	
	\draw[->,thick] (0,0) -- (0,1.5) node[anchor=south,rotate=45] {\textcolor{gray}{{$y$}}};
	\foreach \y in {1}
	{ \draw[thick] (-2pt,\y) node[anchor=east,rotate=45] {\textcolor{gray}{\tiny{$\sqrt{2}$}}} -- (2pt,\y); }
	\end{scope}

	\draw[->,thick] (-1.5,0) -- (1.5,0) node[anchor=west]{$u$};
	{ \draw[thick] (-1, -2pt) node[anchor=north] {\tiny{$-\sqrt{2}$}} -- (-1, 2pt); }
	{ \draw[thick] (1, -2pt) node[anchor=north] {\tiny{$\sqrt{2}$}} -- (1, 2pt); }
	
	\draw[->,thick] (0,-0.2) -- (0,2.6) node[anchor=south]{$v$};
	{ \draw[thick] (-2pt,1) node[anchor=south east] {\tiny{$\sqrt{2}$}} -- (2pt,1); }
	{ \draw[thick] (-2pt,2) node[anchor=south east] {\tiny{$2 \sqrt{2}$}} -- (2pt,2); }

	\end{tikzpicture}
	\caption{The green square $S$ depicts the anisotropic Young diagram $ T(i^{\alpha i}) $
		in the Russian coordinate system.
		The blue hatched triangle depicts the limit shape $\Lambda_\infty$;
		the blue dashed line depicts the corresponding profile $\omega_{\Lambda_\infty}$.}
	\label{fig:biane-lln-square}
\end{figure}
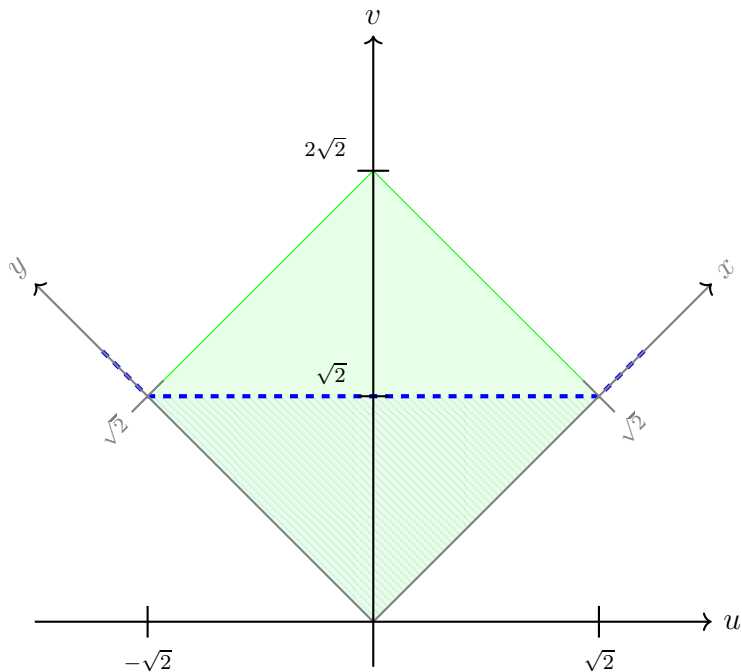

\begin{figure}
	\centering
	 
 \begin{tikzpicture}[scale=0.240000]
\draw[red,thick](-24.000000,24.000000) -- (-20.000000,20.000000) -- (-19.000000,19.038000) -- (-18.000000,18.260000) -- (-17.000000,18.016000) -- (-16.000000,18.464000) -- (-15.000000,19.278000) -- (-14.000000,20.036000) -- (-13.000000,20.280000) -- (-12.000000,19.838000) -- (-11.000000,19.058000) -- (-10.000000,18.452000) -- (-9.000000,18.498000) -- (-8.000000,19.174000) -- (-7.000000,20.056000) -- (-6.000000,20.682000) -- (-5.000000,20.638000) -- (-4.000000,19.974000) -- (-3.000000,19.164000) -- (-2.000000,18.788000) -- (-1.000000,19.256000) -- (0.000000,20.112000) -- (1.000000,20.956000) -- (2.000000,21.328000) -- (3.000000,20.860000) -- (4.000000,20.054000) -- (5.000000,19.368000) -- (6.000000,19.292000) -- (7.000000,19.908000) -- (8.000000,20.756000) -- (9.000000,21.454000) -- (10.000000,21.534000) -- (11.000000,20.934000) -- (12.000000,20.166000) -- (13.000000,19.692000) -- (14.000000,19.906000) -- (15.000000,20.636000) -- (16.000000,21.462000) -- (17.000000,21.936000) -- (18.000000,21.722000) -- (19.000000,20.974000) -- (20.000000,20.000000) -- (24.000000,24.000000);
\draw[black!84.300000](5.000000,19.000000)--(4.000000,20.000000);
\draw[black!15.100000](9.000000,23.000000)--(8.000000,24.000000);
\draw[black!15.900000](-8.000000,16.000000)--(-9.000000,17.000000);
\draw[black!88.900000](-18.000000,18.000000)--(-19.000000,19.000000);
\draw[black!90.500000](-3.000000,19.000000)--(-4.000000,20.000000);
\draw[black!0.300000](-8.000000,24.000000)--(-9.000000,25.000000);
\draw[black!89.000000](-11.000000,19.000000)--(-12.000000,20.000000);
\draw[black!0.200000](2.000000,14.000000)--(1.000000,15.000000);
\draw[black!1.000000](-14.000000,14.000000)--(-15.000000,15.000000);
\draw[black!0.100000](7.000000,25.000000)--(6.000000,26.000000);
\draw[black!98.100000](-19.000000,19.000000)--(-20.000000,20.000000);
\draw[black!60.700000](18.000000,22.000000)--(17.000000,23.000000);
\draw[black!31.200000](2.000000,22.000000)--(1.000000,23.000000);
\draw[black!11.100000](-14.000000,22.000000)--(-15.000000,23.000000);
\draw[black!83.200000](-4.000000,20.000000)--(-5.000000,21.000000);
\draw[black!0.900000](15.000000,25.000000)--(14.000000,26.000000);
\draw[black!0.100000](-1.000000,25.000000)--(-2.000000,26.000000);
\draw[black!88.400000](12.000000,20.000000)--(11.000000,21.000000);
\draw[black!7.400000](-15.000000,15.000000)--(-16.000000,16.000000);
\draw[black!80.300000](-10.000000,18.000000)--(-11.000000,19.000000);
\draw[black!53.800000](6.000000,18.000000)--(5.000000,19.000000);
\draw[black!87.400000](19.000000,21.000000)--(18.000000,22.000000);
\draw[black!2.300000](-7.000000,15.000000)--(-8.000000,16.000000);
\draw[black!0.600000](1.000000,15.000000)--(0.000000,16.000000);
\draw[black!1.300000](16.000000,16.000000)--(15.000000,17.000000);
\draw[black!26.300000](17.000000,23.000000)--(16.000000,24.000000);
\draw[black!0.900000](0.000000,24.000000)--(-1.000000,25.000000);
\draw[black!46.000000](10.000000,22.000000)--(9.000000,23.000000);
\draw[black!80.000000](11.000000,21.000000)--(10.000000,22.000000);
\draw[black!52.200000](-5.000000,21.000000)--(-6.000000,22.000000);
\draw[black!6.300000](0.000000,16.000000)--(-1.000000,17.000000);
\draw[black!100.000000](-21.000000,21.000000)--(-22.000000,22.000000);
\draw[black!7.400000](16.000000,24.000000)--(15.000000,25.000000);
\draw[black!73.400000](3.000000,21.000000)--(2.000000,22.000000);
\draw[black!37.800000](-13.000000,21.000000)--(-14.000000,22.000000);
\draw[black!12.600000](15.000000,17.000000)--(14.000000,18.000000);
\draw[black!27.600000](-16.000000,16.000000)--(-17.000000,17.000000);
\draw[black!98.700000](20.000000,20.000000)--(19.000000,21.000000);
\draw[black!18.700000](-6.000000,22.000000)--(-7.000000,23.000000);
\draw[black!19.100000](7.000000,17.000000)--(6.000000,18.000000);
\draw[black!47.700000](-9.000000,17.000000)--(-10.000000,18.000000);
\draw[black!90.300000](4.000000,20.000000)--(3.000000,21.000000);
\draw[black!1.900000](-15.000000,23.000000)--(-16.000000,24.000000);
\draw[black!3.600000](-7.000000,23.000000)--(-8.000000,24.000000);
\draw[black!26.500000](-1.000000,17.000000)--(-2.000000,18.000000);
\draw[black!62.200000](-17.000000,17.000000)--(-18.000000,18.000000);
\draw[black!7.200000](1.000000,23.000000)--(0.000000,24.000000);
\draw[black!72.100000](-12.000000,20.000000)--(-13.000000,21.000000);
\draw[black!4.200000](8.000000,16.000000)--(7.000000,17.000000);
\draw[black!3.400000](8.000000,24.000000)--(7.000000,25.000000);
\draw[black!73.700000](13.000000,19.000000)--(12.000000,20.000000);
\draw[black!39.300000](14.000000,18.000000)--(13.000000,19.000000);
\draw[black!68.800000](-2.000000,18.000000)--(-3.000000,19.000000);
\draw[black!1.900000](-20.000000,20.000000)--(-16.000000,24.000000);
\draw[black!1.000000](-14.000000,14.000000)--(-10.000000,18.000000);
\draw[black!0.300000](-13.000000,21.000000)--(-9.000000,25.000000);
\draw[black!15.100000](-11.000000,19.000000)--(-7.000000,23.000000);
\draw[black!20.200000](-16.000000,16.000000)--(-12.000000,20.000000);
\draw[black!6.300000](-4.000000,20.000000)--(0.000000,24.000000);
\draw[black!30.900000](5.000000,19.000000)--(9.000000,23.000000);
\draw[black!2.300000](-7.000000,15.000000)--(-3.000000,19.000000);
\draw[black!1.300000](16.000000,16.000000)--(20.000000,20.000000);
\draw[black!0.100000](-6.000000,22.000000)--(-2.000000,26.000000);
\draw[black!6.400000](-15.000000,15.000000)--(-11.000000,19.000000);
\draw[black!3.300000](-12.000000,20.000000)--(-8.000000,24.000000);
\draw[black!18.900000](12.000000,20.000000)--(16.000000,24.000000);
\draw[black!33.600000](-10.000000,18.000000)--(-6.000000,22.000000);
\draw[black!34.600000](-17.000000,17.000000)--(-13.000000,21.000000);
\draw[black!0.800000](-5.000000,21.000000)--(-1.000000,25.000000);
\draw[black!34.900000](6.000000,18.000000)--(10.000000,22.000000);
\draw[black!20.200000](-1.000000,17.000000)--(3.000000,21.000000);
\draw[black!11.300000](15.000000,17.000000)--(19.000000,21.000000);
\draw[black!5.700000](0.000000,16.000000)--(4.000000,20.000000);
\draw[black!0.200000](2.000000,14.000000)--(6.000000,18.000000);
\draw[black!6.500000](11.000000,21.000000)--(15.000000,25.000000);
\draw[black!31.800000](-9.000000,17.000000)--(-5.000000,21.000000);
\draw[black!26.700000](-18.000000,18.000000)--(-14.000000,22.000000);
\draw[black!3.300000](3.000000,21.000000)--(7.000000,25.000000);
\draw[black!26.700000](14.000000,18.000000)--(18.000000,22.000000);
\draw[black!42.300000](-2.000000,18.000000)--(2.000000,22.000000);
\draw[black!14.900000](7.000000,17.000000)--(11.000000,21.000000);
\draw[black!0.400000](1.000000,15.000000)--(5.000000,19.000000);
\draw[black!0.900000](10.000000,22.000000)--(14.000000,26.000000);
\draw[black!13.600000](-8.000000,16.000000)--(-4.000000,20.000000);
\draw[black!9.200000](-19.000000,19.000000)--(-15.000000,23.000000);
\draw[black!24.000000](-3.000000,19.000000)--(1.000000,23.000000);
\draw[black!11.700000](4.000000,20.000000)--(8.000000,24.000000);
\draw[black!34.400000](13.000000,19.000000)--(17.000000,23.000000);
\draw[black!4.200000](8.000000,16.000000)--(12.000000,20.000000);
\draw[black!0.100000](2.000000,22.000000)--(6.000000,26.000000);
\draw[ultra thick,blue,dashed](-24.000000,24.000000) --(-20.000000,20.000000) -- (20.000000,20.000000) -- (24.000000,24.000000); \end{tikzpicture} 
 
	\caption{Diagonal gray lines form a 
		\emph{heatmap} showing probabilities that a given segment belongs to the profile
		of the random Young diagram $\Lambda_n$ (the intensity of color corresponds to
		the probability). 
		The rectangular grid of anisotropically stretched boxes is clearly visible.
		The blue dashed line depicts the limit profile $\omega_{\Lambda_\infty}$.
		In order to save space, only the neighborhood of $\Lambda_\infty$ is shown.
		The red solid line depicts the mean value $t\mapsto \E \omega_{\Lambda_n}(t)$.
		In this example $\alpha=4$, $i=5$, $n=50$.}
	\label{fig:bok5}
	
	\bigskip
	
	\centering
	\subfile{figures/bok10alpha4-1000prob.tex}
	\caption{The analogue of \cref{fig:bok5} for $\alpha=4$, $i=10$, $n=200$.
		The increased number of boxes is compensated by a decrease of the size of the individual boxes.
		With this choice of scaling, the fluctuations of random Young diagrams $\Lambda_n$ around $\Lambda_\infty$
		tend to zero as $n\to\infty$.}
	\label{fig:bok10}
\end{figure}

\cref{fig:bok5,fig:bok10} are an illustration of the Law of Large Numbers
(\cref{theo:lln}): as the number of boxes $n\to\infty$ tends to infinity,
suitably scaled random Young diagrams $\Lambda_n$ indeed seem to converge
to the deterministic limit $\Lambda_\infty$.

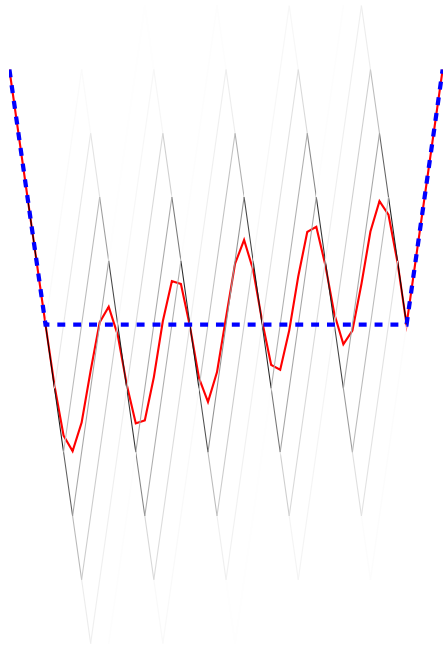
\begin{figure}
	\centering
	 
 \begin{tikzpicture}
\begin{scope}[xscale=0.120000,yscale=0.12*sqrt(50)]
\clip(-20*1.2,20*0.75) rectangle (20*1.2,20*1.25);
\draw[red,thick](-24.000000,24.000000) -- (-20.000000,20.000000) -- (-19.000000,19.038000) -- (-18.000000,18.260000) -- (-17.000000,18.016000) -- (-16.000000,18.464000) -- (-15.000000,19.278000) -- (-14.000000,20.036000) -- (-13.000000,20.280000) -- (-12.000000,19.838000) -- (-11.000000,19.058000) -- (-10.000000,18.452000) -- (-9.000000,18.498000) -- (-8.000000,19.174000) -- (-7.000000,20.056000) -- (-6.000000,20.682000) -- (-5.000000,20.638000) -- (-4.000000,19.974000) -- (-3.000000,19.164000) -- (-2.000000,18.788000) -- (-1.000000,19.256000) -- (0.000000,20.112000) -- (1.000000,20.956000) -- (2.000000,21.328000) -- (3.000000,20.860000) -- (4.000000,20.054000) -- (5.000000,19.368000) -- (6.000000,19.292000) -- (7.000000,19.908000) -- (8.000000,20.756000) -- (9.000000,21.454000) -- (10.000000,21.534000) -- (11.000000,20.934000) -- (12.000000,20.166000) -- (13.000000,19.692000) -- (14.000000,19.906000) -- (15.000000,20.636000) -- (16.000000,21.462000) -- (17.000000,21.936000) -- (18.000000,21.722000) -- (19.000000,20.974000) -- (20.000000,20.000000) -- (24.000000,24.000000);
\draw[black!84.300000](5.000000,19.000000)--(4.000000,20.000000);
\draw[black!15.100000](9.000000,23.000000)--(8.000000,24.000000);
\draw[black!15.900000](-8.000000,16.000000)--(-9.000000,17.000000);
\draw[black!88.900000](-18.000000,18.000000)--(-19.000000,19.000000);
\draw[black!90.500000](-3.000000,19.000000)--(-4.000000,20.000000);
\draw[black!0.300000](-8.000000,24.000000)--(-9.000000,25.000000);
\draw[black!89.000000](-11.000000,19.000000)--(-12.000000,20.000000);
\draw[black!0.200000](2.000000,14.000000)--(1.000000,15.000000);
\draw[black!1.000000](-14.000000,14.000000)--(-15.000000,15.000000);
\draw[black!0.100000](7.000000,25.000000)--(6.000000,26.000000);
\draw[black!98.100000](-19.000000,19.000000)--(-20.000000,20.000000);
\draw[black!60.700000](18.000000,22.000000)--(17.000000,23.000000);
\draw[black!31.200000](2.000000,22.000000)--(1.000000,23.000000);
\draw[black!11.100000](-14.000000,22.000000)--(-15.000000,23.000000);
\draw[black!83.200000](-4.000000,20.000000)--(-5.000000,21.000000);
\draw[black!0.900000](15.000000,25.000000)--(14.000000,26.000000);
\draw[black!0.100000](-1.000000,25.000000)--(-2.000000,26.000000);
\draw[black!88.400000](12.000000,20.000000)--(11.000000,21.000000);
\draw[black!7.400000](-15.000000,15.000000)--(-16.000000,16.000000);
\draw[black!80.300000](-10.000000,18.000000)--(-11.000000,19.000000);
\draw[black!53.800000](6.000000,18.000000)--(5.000000,19.000000);
\draw[black!87.400000](19.000000,21.000000)--(18.000000,22.000000);
\draw[black!2.300000](-7.000000,15.000000)--(-8.000000,16.000000);
\draw[black!0.600000](1.000000,15.000000)--(0.000000,16.000000);
\draw[black!1.300000](16.000000,16.000000)--(15.000000,17.000000);
\draw[black!26.300000](17.000000,23.000000)--(16.000000,24.000000);
\draw[black!0.900000](0.000000,24.000000)--(-1.000000,25.000000);
\draw[black!46.000000](10.000000,22.000000)--(9.000000,23.000000);
\draw[black!80.000000](11.000000,21.000000)--(10.000000,22.000000);
\draw[black!52.200000](-5.000000,21.000000)--(-6.000000,22.000000);
\draw[black!6.300000](0.000000,16.000000)--(-1.000000,17.000000);
\draw[black!100.000000](-21.000000,21.000000)--(-22.000000,22.000000);
\draw[black!7.400000](16.000000,24.000000)--(15.000000,25.000000);
\draw[black!73.400000](3.000000,21.000000)--(2.000000,22.000000);
\draw[black!37.800000](-13.000000,21.000000)--(-14.000000,22.000000);
\draw[black!12.600000](15.000000,17.000000)--(14.000000,18.000000);
\draw[black!27.600000](-16.000000,16.000000)--(-17.000000,17.000000);
\draw[black!98.700000](20.000000,20.000000)--(19.000000,21.000000);
\draw[black!18.700000](-6.000000,22.000000)--(-7.000000,23.000000);
\draw[black!19.100000](7.000000,17.000000)--(6.000000,18.000000);
\draw[black!47.700000](-9.000000,17.000000)--(-10.000000,18.000000);
\draw[black!90.300000](4.000000,20.000000)--(3.000000,21.000000);
\draw[black!1.900000](-15.000000,23.000000)--(-16.000000,24.000000);
\draw[black!3.600000](-7.000000,23.000000)--(-8.000000,24.000000);
\draw[black!26.500000](-1.000000,17.000000)--(-2.000000,18.000000);
\draw[black!62.200000](-17.000000,17.000000)--(-18.000000,18.000000);
\draw[black!7.200000](1.000000,23.000000)--(0.000000,24.000000);
\draw[black!72.100000](-12.000000,20.000000)--(-13.000000,21.000000);
\draw[black!4.200000](8.000000,16.000000)--(7.000000,17.000000);
\draw[black!3.400000](8.000000,24.000000)--(7.000000,25.000000);
\draw[black!73.700000](13.000000,19.000000)--(12.000000,20.000000);
\draw[black!39.300000](14.000000,18.000000)--(13.000000,19.000000);
\draw[black!68.800000](-2.000000,18.000000)--(-3.000000,19.000000);
\draw[black!1.900000](-20.000000,20.000000)--(-16.000000,24.000000);
\draw[black!1.000000](-14.000000,14.000000)--(-10.000000,18.000000);
\draw[black!0.300000](-13.000000,21.000000)--(-9.000000,25.000000);
\draw[black!15.100000](-11.000000,19.000000)--(-7.000000,23.000000);
\draw[black!20.200000](-16.000000,16.000000)--(-12.000000,20.000000);
\draw[black!6.300000](-4.000000,20.000000)--(0.000000,24.000000);
\draw[black!30.900000](5.000000,19.000000)--(9.000000,23.000000);
\draw[black!2.300000](-7.000000,15.000000)--(-3.000000,19.000000);
\draw[black!1.300000](16.000000,16.000000)--(20.000000,20.000000);
\draw[black!0.100000](-6.000000,22.000000)--(-2.000000,26.000000);
\draw[black!6.400000](-15.000000,15.000000)--(-11.000000,19.000000);
\draw[black!3.300000](-12.000000,20.000000)--(-8.000000,24.000000);
\draw[black!18.900000](12.000000,20.000000)--(16.000000,24.000000);
\draw[black!33.600000](-10.000000,18.000000)--(-6.000000,22.000000);
\draw[black!34.600000](-17.000000,17.000000)--(-13.000000,21.000000);
\draw[black!0.800000](-5.000000,21.000000)--(-1.000000,25.000000);
\draw[black!34.900000](6.000000,18.000000)--(10.000000,22.000000);
\draw[black!20.200000](-1.000000,17.000000)--(3.000000,21.000000);
\draw[black!11.300000](15.000000,17.000000)--(19.000000,21.000000);
\draw[black!5.700000](0.000000,16.000000)--(4.000000,20.000000);
\draw[black!0.200000](2.000000,14.000000)--(6.000000,18.000000);
\draw[black!6.500000](11.000000,21.000000)--(15.000000,25.000000);
\draw[black!31.800000](-9.000000,17.000000)--(-5.000000,21.000000);
\draw[black!26.700000](-18.000000,18.000000)--(-14.000000,22.000000);
\draw[black!3.300000](3.000000,21.000000)--(7.000000,25.000000);
\draw[black!26.700000](14.000000,18.000000)--(18.000000,22.000000);
\draw[black!42.300000](-2.000000,18.000000)--(2.000000,22.000000);
\draw[black!14.900000](7.000000,17.000000)--(11.000000,21.000000);
\draw[black!0.400000](1.000000,15.000000)--(5.000000,19.000000);
\draw[black!0.900000](10.000000,22.000000)--(14.000000,26.000000);
\draw[black!13.600000](-8.000000,16.000000)--(-4.000000,20.000000);
\draw[black!9.200000](-19.000000,19.000000)--(-15.000000,23.000000);
\draw[black!24.000000](-3.000000,19.000000)--(1.000000,23.000000);
\draw[black!11.700000](4.000000,20.000000)--(8.000000,24.000000);
\draw[black!34.400000](13.000000,19.000000)--(17.000000,23.000000);
\draw[black!4.200000](8.000000,16.000000)--(12.000000,20.000000);
\draw[black!0.100000](2.000000,22.000000)--(6.000000,26.000000);
\draw[ultra thick,blue,dashed](-24.000000,24.000000) --(-20.000000,20.000000) -- (20.000000,20.000000) -- (24.000000,24.000000); 
\end{scope}
\end{tikzpicture} 
 
	\caption{The analogue of \cref{fig:bok5} for the profiles
		$\sqrt{n}\ \omega_{\Lambda_n}$ for which only the second Russian coordinate $v$
		was anisotropically stretched by factor $\sqrt{n}$.
		In this example $\alpha=4$, $i=5$, $n=50$.}
	\label{fig:anizo5}
	
\end{figure}

\begin{figure}
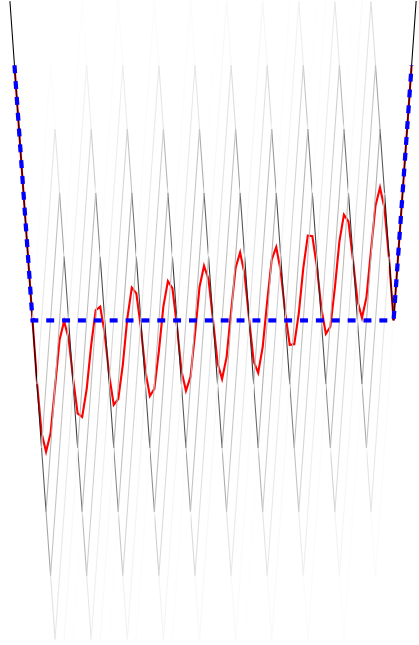

	\centering
	\subfile{figures/bok10alpha4-1000probANISOTROPIC.tex}
	\caption{
		The analogue of \cref{fig:anizo5}
		for $\alpha=4$, $i=10$, $n=200$.
		In this scaling the fluctuations of $\sqrt{n}\ \omega_{\Lambda_n}$ 
		(\emph{``the shaded area''} of the heatmap)
		around $\sqrt{n}\ \omega_{\Lambda_\infty}$ do not vanish as $n\to\infty$.
		Also the discrepancy between the mean value of these fluctuations 
		$\sqrt{n}\ \E \omega_{\Lambda_n}$ (the red solid line)
		and the `first-order approximation' $\sqrt{n}\ \omega_{\Lambda_\infty}$
		(the dashed blue line)
		does not vanish as $n\to\infty$. 
	}
\end{figure}

\begin{figure}
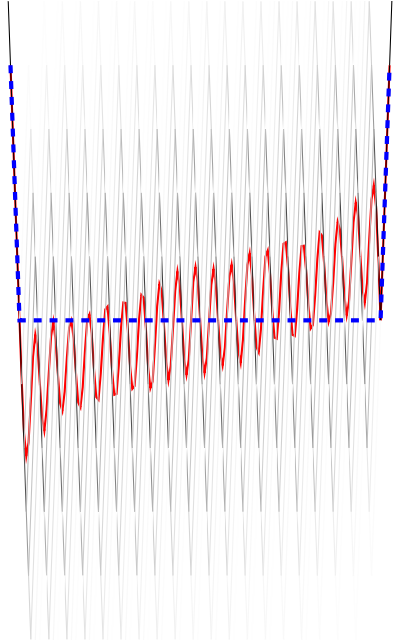

	\centering
	\subfile{figures/bok20alpha4-1000probANISO.tex}
	
	\caption{
		The analogue of \cref{fig:anizo5}
		for $\alpha=4$, $i=20$, $n=800$.
		As $n\to\infty$, the amplitude of the oscillations of the discrepancy 
		$\sqrt{n}\ \left( \E \omega_{\Lambda_n}- \omega_{\Lambda_\infty} \right)$
		remain roughly constant, but their frequency tends to infinity and thus
		the discrepancy converges in the sense of Schwartz distributions.}
	\label{fig:anizo20}
	
\end{figure}

\smallskip

In order to speak about CLT one should consider a more refined scaling:
the one in which one stretches the second Russian coordinate $v$ by a factor of $\sqrt{n}$.
This scaling has a bizarre feature: each individual box is drawn as a parallelogram in 
which the difference 
$v_{\operatorname{max}}-v_{\operatorname{min}}=\sqrt{\alpha}+\frac{1}{\sqrt{\alpha}}$
of the $v$-coordinates of the top and the bottom vertex does not depend on $n$
so one cannot claim that the \emph{size} of an individual box converges to zero;
nevertheless the \emph{area} of an individual box does converge to zero.
Figures~\ref{fig:anizo5}--\ref{fig:anizo20} illustrate this choice of the scaling.

\smallskip

\begin{figure}
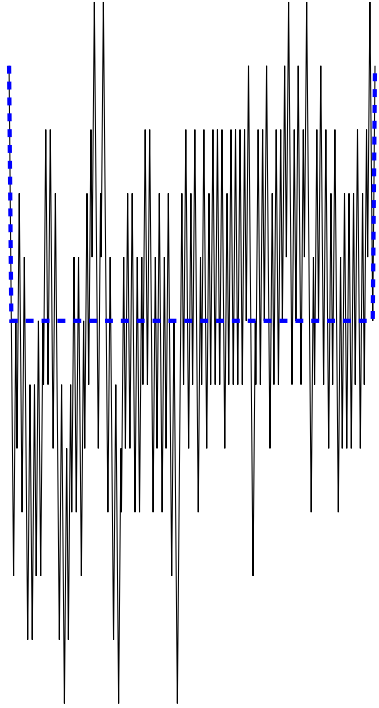

	\centering
	\subfile{figures/single-walk-bok80.tex}
	\caption{A sample profile $\sqrt{n}\ \omega_{\Lambda_n}$ for
		$\alpha=4$, $i=80$, $n=12800$.}
	\label{eq:sample-path}
\end{figure}

\cref{theo:clt} implies in particular that
the limit
\begin{equation}
\label{eq:weak-topology}
\E \Delta_\infty= \lim_{n\to\infty} \E \Delta_n    
\end{equation}
exists as a Schwartz distribution on the real line;
the convergence holds in the weak sense, i.e.~the limit
\[ 
\int u^k\ \E\Delta_\infty(u)  \dif u :=
\lim_{n\to\infty} \int u^k\ \E \Delta_n(u) \dif u 
\]
exists and is finite for an arbitrary integer $k\geq 0$.

The convergence in \eqref{eq:weak-topology} is illustrated in 
Figures~\ref{fig:anizo5}--\ref{fig:anizo20}:
the function $\E\Delta_n$ is the difference between the red solid curve
(i.e.~the plot of $\sqrt{n} \ \E \omega_{\Lambda_n}$)
and the blue dashed curve (i.e.~the plot of $\sqrt{n}\ \omega_{\Lambda_\infty}$). 
As one can see on these examples, the function
$\E\Delta_n$ has oscillations of period and amplitude related to the grid of the boxes
of the Young diagrams. As $n\to\infty$, the amplitude of these oscillations
does not converge to zero (so that the convergence in the supremum norm 
does not hold)
but their frequency tends to infinity (which 
is sufficient for convergence in the weak topology). 

\medskip

The central limit theorem in \cref{theo:clt} 
is somewhat reminiscent to CLT for random walks. A significant difference
lies in the nature of the limit object: in the case of the random walks it is
the Brownian motion which has continuous trajectories while in the case considered
in \cref{theo:clt} it is a random Schwartz distribution $\Delta_\infty$
for which computer simulations (such as the one shown in \cref{eq:sample-path})
suggest that it has  quite singular `trajectories', reminiscent to that of the white noise.
A systematic investigation of such trajectory-wise properties of $\Delta_\infty$
via short-distance asymptotics of the covariance of the corresponding Gaussian field
is out of the scope of the current paper.

\subsection{Content of the paper}
In \cref{sec:approx-fact-characters} we introduce the main algebraic
tool for our considerations, namely \cref{def:approx-factorization-charactersB}
which gives several convenient characterizations of
the approximate factorization property.
In \cref{sec:proof-of-key-tool-short} we prove this result.
\cref{sec:moments-determine-measure} is devoted to some technical results, mostly related
to probability measures which are uniquely determined by their moments.
In \cref{sec:proof-of-LLN} we give the proof of Law of Large Numbers (\cref{theo:lln}).
Finally, in \cref{sec:proof-of-CLTCLT} we give the proof of Central Limit Theorem
(\cref{theo:clt}).

\section{Approximate factorization of characters}
\label{sec:approx-fact-characters}

The purpose of this section is to give a number of 
conditions which are equivalent to the approximate factorization property 
(\cref{def:approx-factorization-charactersA}).
These conditions often turn out to be more convenient in 
applications, such as the ones from \cite{DolegaSniady-examples-AFP}.

\subsection{Conditional cumulants}
\label{sec:conditional-cumulants}

Let $\Alg$ and $\AlgB$ be commutative unital algebras 
and let $\E\colon\Alg\to\AlgB$ be a unital linear map.
We will say that $\E$ is a \emph{conditional expectation value}; 
in the literature one usually imposes some additional constraints on the structure of $\Alg$,
$\AlgB$ and $\E$, 
but for the purposes of the current paper such additional assumptions will not be necessary.

For any tuple $x_1,\dots,x_\ell \in\Alg$ we define their \emph{conditional cumulant} as
\begin{multline*} \condKumu{\Alg}{\AlgB}(x_1,\dots,x_\ell) =  
[t_1 \cdots t_\ell] \log \E e^{t_1 x_1+\dots+t_\ell x_\ell} = \\
\left. \frac{\partial^\ell}{\partial t_1 \cdots \partial t_\ell}
\log \E e^{t_1 x_1+\dots+t_\ell x_\ell} \right|_{t_1=\cdots=t_\ell=0}
\in \AlgB,
\end{multline*}
where the operations on the right-hand side should be understood 
in the sense of formal power series in variables
$t_1,\dots,t_\ell$.

\medskip

Note that the cumulants for partitions which we introduced in \cref{sec:concatenation}
fit into this general framework: for $\Alg:=\R[\AllPartitions]$ one should take
the semigroup algebra of partitions, for $\AlgB:=\R$ the real numbers   
and for $\E:=\chi\colon\R[\AllPartitions]\to\R$ the character.

\subsection{Normalized Jack characters}
The usual way of viewing the characters of the symmetric groups is to fix the 
irreducible representation $\lambda$
and to consider the character as a function of the conjugacy class $\pi$.
However, there is also another very successful viewpoint due to 
Kerov and Olshanski \cite{KerovOlshanski1994}, 
called \emph{dual approach}, which suggests
to do roughly the opposite. Lassalle \cite{Lassalle2008a,Lassalle2009} adapted this idea to
the framework of Jack characters.
In order for this dual approach to be successful 
one has to choose the most convenient normalization constants.
In the current paper we will use the normalization introduced by Dołęga and F\'eray 
\cite{DolegaFeray2014} 
which offers some advantages over the original normalization of Lassalle. 
Thus, with the right choice of the multiplicative constant, the unnormalized Jack character
$\chi^{(\alpha)}_\lambda$ from \eqref{eq:character-Jack-unnormalized-zmiana}
becomes the \emph{normalized Jack character $\Ch^{(\alpha)}_\pi(\lambda)$}, defined as follows.
\begin{definition}
	\label{def:jack-character-classical}
	Let $\alpha>0$ be given and let $\pi$ be a partition. 
	For any Young diagram $\lambda$ the value of 
	the \emph{normalized Jack character $\Ch_\pi^{(\alpha)}(\lambda)$} is given by:
	\begin{equation}
	\label{eq:what-is-jack-character-zmiana}
	\Ch_{\pi}^{(\alpha)}(\lambda):=
	\begin{cases}
	|\lambda|^{\underline{|\pi|}}\ \chi^{(\alpha)}_\lambda(\pi)
	&\text{if }|\lambda| \ge |\pi| ;\\
	0 & \text{if }|\lambda| < |\pi|,
	\end{cases}
	\end{equation}
	where 
	\[ n^{\underline{k}}:=n (n-1) \cdots (n-k+1)\]
	denotes the falling power and $\chi^{(\alpha)}_\lambda(\pi)$ is the Jack character
	\eqref{eq:character-Jack-unnormalized-zmiana}.
	The choice of an empty partition $\pi=\emptyset$ is acceptable; in this case
	$\Ch_\emptyset^{(\alpha)}(\lambda)=1$.
\end{definition}

Each Jack character depends on the deformation parameter $\alpha$; 
in order to keep the notation light we will make this dependence implicit
and we will simply write $\Ch_{\pi}(\lambda)$.

\subsection{The deformation parameters}
In order to avoid dealing with the square root of the variable $\alpha$
which is ubiquitous in the subject of Jack deformation,
we introduce an indeterminate $A := \sqrt{\a}$.
The algebra of Laurent polynomials in the indeterminate $A$ will be denoted by $\Laurent$.

\medskip

A special role will be played by the quantity
\begin{equation}
\label{eq:gamma}
\gamma := -A +\frac{1}{A} \in \Laurent   
\end{equation}
which already appeared in the numerator on the left-hand side of \eqref{eq:double-scaling-refined}.

\subsection{The linear space of $\alpha$-polynomial functions}
\label{sec:polynomial}
In a paper \cite{Sniady-AsymptoticsJack} the second-named author has defined a certain filtered linear space 
of \emph{$\alpha$-po\-ly\-no\-mial functions}.
This linear space consists of certain functions in the set $\AllYoung$
of Young diagrams with values in the ring $\Laurent$ of Laurent polynomials and, among many equivalent definitions, one can define it using normalized Jack characters.

\begin{definition}[{\cite[Proposition 5.2]{Sniady-AsymptoticsJack}}]
	\label{def:polynomial-functions}
	The linear space of $\alpha$-polynomial functions is the linear span (with rational coefficients) of the functions
	\begin{equation}
	\label{eq:linear-span}
	\gamma^k \Ch_\pi \colon \AllYoung \to \Laurent
	\end{equation}
	over the integers $k\geq 0$ and over partitions $\pi\in\AllPartitions$.
	
	The filtration on this vector space is specified as follows:
	for an integer $n\geq 0$ the subspace of vectors of degree at most $n$ 
	is the linear span of the elements \eqref{eq:linear-span} 
	over integers $k\geq 0$ and over partitions $\pi\in\AllPartitions$
	such that 
	\[ k+|\pi|+\ell(\pi)\leq n . \]
\end{definition}

\subsection{Algebras $\Poly$ and $\Poly_\disjoint$ of $\alpha$-polynomial functions}
\label{sec:polynomial-disjoint}

The vector space of $\alpha$-polynomial functions can be equipped with 
a product in two distinct natural ways (which will be reviewed in the following).
With each of these two products it becomes a commutative, unital filtered algebra.

Firstly, as a product we may take the \emph{pointwise product of functions on $\AllYoung$}.
The resulting algebra will be denoted by $\Poly$ (the fact that the $\Poly$ is closed under 
such product was proved by Dołęga and F\'eray
\cite[Theorem 1.4]{DolegaFeray2014}).

Secondly, as a product we may take the \emph{disjoint product $\disjoint$}, see
\cite[Section 2.3]{Sniady-AsymptoticsJack}, which
is defined on the linear base of Jack characters by
\emph{concatenation} (see \cref{sec:concatenation}) of the corresponding partitions
\[ \left( \gamma^p \Ch_\pi\right) \disjoint \left( \gamma^q \Ch_\sigma\right):= \gamma^{p+q} \Ch_{\pi \sigma}.\]
The resulting algebra will be denoted by
$\Poly_\disjoint$.

\subsection{Two probabilistic structures on $\alpha$-polynomial functions}
\label{sec:probabilistic-structures}

Assume that $\chi\colon\partitions{n}\to\R$ is a reducible Jack character 
and let $\mathbb{P}_\chi$ be the corresponding probability measure \eqref{eq:proba-linearcomb-A}
on the set $\Young{n}$ of Young diagrams with $n$ boxes.
With this setup, functions on $\Young{n}$ can be viewed as random variables;
we denote by $\E_\chi$ the corresponding expectation. 

\medskip

Let us fix a partition $\pi\in\partitions{n}$;
we denote by 
$\chi^{(\alpha)}_\# (\pi)$ the random variable 
$\Young{n}\ni\lambda\mapsto \chi^{(\alpha)}_\lambda (\pi)$ given by
irreducible Jack character \eqref{eq:character-Jack-unnormalized-zmiana}.
From the way the probability measure $\mathbb{P}_\chi$ was defined 
in \eqref{eq:proba-linearcomb-A} it follows immediately that
\begin{equation}
\label{eq:mean-value-simple}
\E_\chi\left[ \chi^{(\alpha)}_\# (\pi) \right] =  \chi(\pi).   
\end{equation}

\bigskip

Any $\alpha$-polynomial function $F$ can be restricted to the set $\Young{n}$ of
Young diagrams with $n$ boxes; thus it makes sense to speak about its
expected value $\E_\chi F$. In the case when $F=\Ch_\pi$ is a 
Jack character, this expected value can be explicitly calculated thanks to \eqref{eq:mean-value-simple}:

\begin{equation}
\label{eq:what-is-normalized-character}
\E_\chi \Ch_{\pi}=
\begin{cases}
n^{\underline{|\pi|}}\ \chi(\pi) & \text{if }n < |\pi|, \\
0                                        & \text{otherwise.}
\end{cases}
\end{equation}

\medskip

By considering the multiplicative structure on $\alpha$-polynomial functions
given by the pointwise product, we get in this way a conditional expectation
$\E_\chi\colon \Poly \to \R$;
the corresponding cumulants will be denoted by
$\kappa_{\ell}^{\chi}$.

On the other hand, by considering the disjoint product, we get a conditional expectation
$\E_\chi\colon \Poly_\disjoint \to \R$;
the corresponding cumulants will be denoted by
$\kappa_{\disjoint \ell}^{\chi}$.

\subsection{Equivalent characterizations of approximate factorization of characters}

The following result, \cref{def:approx-factorization-charactersB}, 
is the key tool for the purposes of the current paper.
Its main content is part \ref{item:afp2};
roughly speaking, it states that 
each of the four families of numbers 
\eqref{eq:AFPPartitions}--\eqref{eq:AFPDisjointCumulants}
can be transformed 
into the others.
Each of these four families describes some convenient aspect of the characters $\chi_n$
in the limit $n\to\infty$.
To be more specific:
\begin{itemize}
	\item The family \eqref{eq:AFPCumulants} 
	(and its subset, the family \eqref{eq:AFPCumulants-characters})
	has a direct probabilistic meaning.
	It contains information about the cumulants of some random variables
	which might be handy while proving probabilistic statements such as
	Central Limit Theorem or Law of Large Numbers.
	
	\item On the other hand, the cumulants appearing in the families 
	\eqref{eq:AFPPartitions} and \eqref{eq:AFPDisjointCumulants} are purely algebraic 
	and do \emph{not} have any direct probabilistic meaning.
	However, their merit lies in the fact that in many concrete applications
	(such as the ones from \cite{DolegaSniady-examples-AFP})
	it is much simpler to verify algebraic conditions \ref{item:AFP-A} and 
	\ref{item:disjoint-cumulants} than their probabilistic counterparts
	\ref{item:cumulants-classic-characters} and \ref{item:cumulants-classic-polynomial}.
	
\end{itemize}

\begin{theorem}[The key tool]
	\label{def:approx-factorization-charactersB}
	Assume that $\alpha=\alpha(n)$
	is such as in \cref{sec:hypothesis-double-scaling}.
	Assume also that for each integer $n\geq 1$ we are given a reducible Jack character 
	$\chi_n\colon\partitions{n}\to\R$.
	
	\begin{enumerate}[label=(\emph{\alph*})]
		\setlength\itemsep{2em}
		
		\item \label{item:afp2}
		\emph{Equivalent characterization of 
			approximate factorization property.}
		Then the following four conditions are equivalent: \\
		\begin{enumerate}[label=(\emph{\Alph*})]
			\setlength\itemsep{1em}
			\item \label{item:AFP-A}
			for each integer $\ell \geq 1$ and all integers $l_1,\dots,l_\ell\geq 2$ the limit
			\begin{equation}
			\label{eq:AFPPartitions}
			\lim_{n\to\infty} \kumupartitions^{\chi_n}_\ell (l_1, \dots, l_\ell) \ 
			n^{\frac{l_1+\cdots+l_\ell+\ell-2}{2}} 
			\end{equation}
			exists and is finite;


			\item 
			\label{item:cumulants-classic-characters}
			for each integer $\ell\geq 1$ and all $x_1,\dots,x_\ell\in\{\Ch_1,\Ch_2,\dots\}$ the limit
			\begin{equation} 
			\label{eq:AFPCumulants-characters}
			\lim_{n\to\infty}    \kappa^{\chi_n}_{\ell}(x_{1},\dots,x_{\ell})\ 
			n^{- \frac{\deg x_1+ \cdots + \deg x_\ell - 2(\ell-1)}{2}} 
			\end{equation}
			exists and is finite;

			\item 
			\label{item:cumulants-classic-polynomial}
			for each integer $\ell\geq 1$ and all $x_1,\dots,x_\ell\in\Poly$ the limit
			\begin{equation} 
			\label{eq:AFPCumulants}
			\lim_{n\to\infty}    \kappa^{\chi_n}_{\ell}(x_{1},\dots,x_{\ell})\ 
			n^{- \frac{\deg x_1+ \cdots + \deg x_\ell - 2(\ell-1)}{2}} 
			\end{equation}
			exists and is finite;

			\item 
			\label{item:disjoint-cumulants}
			for each integer $\ell\geq 1$ and all $x_1,\dots,x_\ell\in\Poly_\disjoint$ the limit
			\begin{equation} 
			\label{eq:AFPDisjointCumulants}
			\lim_{n\to\infty}    \kappa^{\chi_n}_{\disjoint \ell}(x_{1},\dots,x_{\ell})\ 
			n^{- \frac{\deg x_1+ \cdots + \deg x_\ell - 2(\ell-1)}{2}} 
			\end{equation}
			exists and is finite.

		\end{enumerate}

		

		
		
		\item 
		\label{item:equivalent-enhanced}
		Assume that the conditions from part  \ref{item:afp2}
		hold true.
		Furthermore, assume that for $\ell=1$ the rate of the convergence of
		\emph{any} of the four expressions under the limit symbol in
		\eqref{eq:AFPPartitions}--\eqref{eq:AFPDisjointCumulants} 
		is of the form 
		\begin{equation}
		\label{eq:nice-limit}
		\operatorname{const}_1+ \frac{\operatorname{const}_2+o(1)}{\sqrt{n}}    
		\end{equation}
		in the limit $n \to \infty$
		and all choices of $l_1$ (respectively, for all choices of $x_1$);
		the constants depend on the choice of $l_1$ (respectively, $x_1$).
		
		Then for $\ell=1$ the rate of convergence of \emph{each} of the four expressions
		\eqref{eq:AFPPartitions}--\eqref{eq:AFPDisjointCumulants} 
		is of the form \eqref{eq:nice-limit}.
	\end{enumerate}

\end{theorem}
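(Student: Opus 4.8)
The plan is to verify the four conditions \eqref{eq:AFPPartitions}--\eqref{eq:AFPDisjointCumulants} against a single \emph{dictionary} between them, and then to propagate the refined rate of part \ref{item:equivalent-enhanced} through that dictionary. The backbone of the dictionary is the mean-value formula \eqref{eq:what-is-normalized-character}, which expresses the pointwise expectation of a Jack character as $\E_{\chi_n}\Ch_\pi = n^{\underline{|\pi|}}\,\chi_n(\pi)$, together with the correspondence between the degree filtration of $\alpha$-polynomial functions and the scaling in $n$: an element of degree $d$ contributes a factor of order $n^{d/2}$, where the indeterminate $\gamma$ counts as degree $1$ precisely because the hypothesis \eqref{eq:double-scaling-refined} forces $\gamma = -\sqrt{\alpha}+\tfrac{1}{\sqrt{\alpha}} \sim g\sqrt{n}$. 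Since $\deg\Ch_k=k+1$, one checks directly that the normalizations in \eqref{eq:AFPPartitions} and in \eqref{eq:AFPCumulants-characters}--\eqref{eq:AFPCumulants} are matched by this rule, and that the falling factorial $n^{\underline{k}}=n^k\big(1+O(1/n)\big)$ differs from $n^k$ only by lower-order terms. I would establish a chain of equivalences connecting the four conditions rather than all pairs separately.

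First I would dispose of the links that are essentially tautological or purely bookkeeping. The implication \eqref{eq:AFPCumulants} $\Rightarrow$ \eqref{eq:AFPCumulants-characters} is immediate, since $\{\Ch_1,\Ch_2,\dots\}\subset\Poly$ and the latter is just a restriction of the former to these generators. The equivalence of \eqref{eq:AFPPartitions} with the disjoint-product condition \eqref{eq:AFPDisjointCumulants} is the most transparent one: the concatenation product on partitions (\cref{sec:concatenation}) is carried, under $\pi\mapsto\Ch_\pi$, exactly onto the disjoint product, so that $\Ch_{\pi}=\Ch_{p_1}\disjoint\cdots\disjoint\Ch_{p_r}$ for $\pi=(p_1,\dots,p_r)$. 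Feeding the mean-value formula into the logarithmic generating function \eqref{eq:log-laplace} then identifies the partition cumulants $\kumupartitions^{\chi_n}_\ell$ with the disjoint cumulants $\kappa^{\chi_n}_{\disjoint\ell}$ of the corresponding single-cycle generators, up to the falling-factorial normalization; the latter only produces $O(1/n)$ corrections and hence does not affect existence of the limit. Extending from the single-cycle generators to all of $\Poly_\disjoint$ is harmless, because $\Poly_\disjoint$ is a polynomial algebra freely generated by $\gamma$ and the $\Ch_k$, and multilinearity of cumulants together with the degree rule $\deg(x\disjoint y)=\deg x+\deg y$ reduces the general case to the generating one.

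The substance of the argument lies in the two remaining links, \eqref{eq:AFPCumulants-characters} $\Rightarrow$ \eqref{eq:AFPCumulants} (passing from the generators $\Ch_k$ to an arbitrary element of the pointwise algebra $\Poly$) and the equivalence of the pointwise condition \eqref{eq:AFPCumulants} with the disjoint one \eqref{eq:AFPDisjointCumulants}. Both rest on the same structural fact, due to Dołęga--F\'eray, that $\Poly$ is a filtered algebra whose top-degree behaviour coincides with the disjoint product: for Jack characters $\deg(\Ch_\pi\cdot\Ch_\sigma)=\deg\Ch_\pi+\deg\Ch_\sigma$ with leading term $\Ch_{\pi\sigma}$, so that $\Ch_\pi\cdot\Ch_\sigma-\Ch_\pi\disjoint\Ch_\sigma$ lies in strictly lower filtration degree. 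The plan is to combine this with the standard expansion of a cumulant containing a product entry, $\kappa_\ell(\dots,yz)=\sum(\text{products of lower cumulants})$, and to bound every resulting term by the degree rule. The point to be established is that each correction term carries a \emph{strictly smaller} power of $n$ than the main term, so that after multiplication by the normalizing factor $n^{-(\deg x_1+\cdots+\deg x_\ell-2(\ell-1))/2}$ it tends to zero, and only the leading contribution, governed by the disjoint product, survives. I expect this degree-counting — showing that cumulants are compatible with the filtration in the precise sense that their order in $n$ is controlled by $\tfrac12(\sum\deg x_i-2(\ell-1))$ and that the pointwise/disjoint discrepancy is subleading — to be the main obstacle, since it is here that one must simultaneously control the algebraic structure constants of Jack characters and the multiplicativity properties of cumulants.

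Finally, for part \ref{item:equivalent-enhanced} I would revisit each step of the chain in the case $\ell=1$ and track the sub-leading coefficient. All the transformations above are explicit polynomial identities whose error terms are controlled by the filtration, and the hypothesis \eqref{eq:double-scaling-refined} supplies the refined expansion $\gamma/\sqrt{n}=g+g'/\sqrt{n}+o(1/\sqrt{n})$ needed to expand the $\gamma$-factors to this precision. The falling factorials contribute corrections of order $1/n$, which are finer than the $1/\sqrt{n}$ scale in \eqref{eq:nice-limit} and hence leave the coefficient of $1/\sqrt{n}$ undisturbed; the only genuine $1/\sqrt{n}$ contributions come from the $\gamma$-expansion and from the assumed refined rate of the single chosen family. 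Consequently a rate of the form \eqref{eq:nice-limit} for any one of the four expressions forces the same form, with explicitly transformed constants, for the remaining three.
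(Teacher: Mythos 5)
Your architecture actually mirrors the paper's proof quite closely: \eqref{eq:AFPCumulants}$\Rightarrow$\eqref{eq:AFPCumulants-characters} is trivial, \eqref{eq:AFPPartitions}$\Leftrightarrow$\eqref{eq:AFPDisjointCumulants} is bookkeeping with falling factorials (the paper delegates it to [\'Sniady 2006, Section 4.7]), \eqref{eq:AFPCumulants-characters}$\Rightarrow$\eqref{eq:AFPCumulants} goes through a generating-set argument with degree control (\cref{lem:Sniady2006-recycled-zombie}), and \eqref{eq:AFPCumulants}$\Leftrightarrow$\eqref{eq:AFPDisjointCumulants} compares the two products through cumulant expansions. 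The genuine gap is that the load-bearing step is left unproved, and the structural input you propose to derive it from is quantitatively too weak. You start from the Dołęga--F\'eray fact that $\Ch_\pi\cdot\Ch_\sigma-\Ch_\pi\disjoint\Ch_\sigma$ lies in \emph{strictly lower} filtration degree, i.e.\ a loss of at least $1$; what the proof requires is the approximate factorization property of the identity map $\id\colon\Poly_\disjoint\to\Poly$, namely
\[ \deg \kumuDisjointPoint(x_1,\dots,x_\ell)\ \leq\ \deg x_1+\cdots+\deg x_\ell-2(\ell-1), \]
a loss of \emph{two} per additional argument. The difference is fatal, not cosmetic: in the nested-cumulant (Brillinger-type) expansion relating $\kappa^{\chi_n}_\ell$ to $\kappa^{\chi_n}_{\disjoint k}$, the term attached to a set partition with $k<\ell$ blocks has order $n^{(\sum_i \deg x_i - c(\ell-k)-2(k-1))/2}$ when the per-merge loss is $c$; after multiplying by the normalization $n^{-(\sum_i\deg x_i-2(\ell-1))/2}$ this is $n^{(2-c)(\ell-k)/2}$, which \emph{diverges} for $c=1$ and is bounded only when $c=2$. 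You do eventually write down the correct target (order controlled by $\tfrac12(\sum\deg x_i-2(\ell-1))$) but defer it as ``the main obstacle'' to be established by degree-counting; it cannot be so established. This bound is exactly \cref{theo:factorization-of-characters}, a recent and nontrivial theorem of the second-named author which the paper imports rather than reproves, and it does not follow from the product formula for Jack characters combined with Leonov--Shiryaev-type manipulations.

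Two secondary points. In the step \eqref{eq:AFPCumulants-characters}$\Rightarrow$\eqref{eq:AFPCumulants} the generating set must be enlarged to $\{\gamma,\Ch_1,\Ch_2,\dots\}$, since $\Poly$ is spanned by the monomials $\gamma^k\Ch_\pi$; one then checks separately that $\kappa_1^{\chi_n}(\gamma)\,n^{-1/2}=\gamma/\sqrt{n}$ converges by \eqref{eq:double-scaling-refined} and that higher cumulants involving the deterministic $\gamma$ vanish, and one must verify the monomial degree control \eqref{eq:lost-assumption} (the paper does this by induction using [Dołęga--F\'eray 2014, Corollaries 2.5 and 3.8]); your degree-$1$ bookkeeping for $\gamma$ gestures at this but omits the verification. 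For part \ref{item:equivalent-enhanced}, tracking the $1/\sqrt{n}$ coefficient needs more than ``the $\gamma$-expansion plus the assumed rate'': since $\deg\gamma=1$, the $\gamma$-divisible top-degree terms of $\Ch_l$ sit \emph{exactly} at the $1/\sqrt{n}$ scale relative to $\Rfunct_{l+1}$ when $g=0$, so their explicit form is needed — this is \eqref{eq:kerov-polynomial-subtle} (modulo $\gamma^2$) in the constant-$\alpha$ regime — while in the double scaling limit $g\neq 0$ every power of $\gamma$ contributes at top order and the full closed formula of \cref{lem:top-degree-jack-character} is required. Your sketch does not register that the precision demanded of the character--free-cumulant dictionary itself changes with whether $g=0$ or $g\neq 0$.
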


When $\alpha=1$, part \ref{item:afp2} of the above result corresponds to \cite[Theorem and Definition 1]{Sniady2006c}. 
The proof is postponed to \cref{sec:proof-of-key-tool-short}.

\section{Proof of \cref{def:approx-factorization-charactersB}}
\label{sec:proof-of-key-tool-short}

In the current section we shall prove the key tool, \cref{def:approx-factorization-charactersB}.

Additionally, concerning part \ref{item:afp2} of \cref{def:approx-factorization-charactersB} 
we shall discuss the exact relationship between the limits 
of the quantities \eqref{eq:AFPPartitions}--\eqref{eq:AFPDisjointCumulants} 
in the case $\ell\in\{1,2\}$. This relationship provides the information about the limit shape
of random Young diagrams in \cref{theo:lln}
as well as about the covariance of the limit Gaussian process describing the fluctuations
in \cref{theo:clt}.

Concerning part \ref{item:equivalent-enhanced} of \cref{def:approx-factorization-charactersB} 
we shall discuss
the exact relationship between the constants which describe the fine asymptotics
\eqref{eq:nice-limit} of the quantities \eqref{eq:AFPPartitions}--\eqref{eq:AFPDisjointCumulants} 
in the case $\ell=1$.
This relationship provides the information about the mean value $\E\Delta_\infty$ 
of the limit Gaussian process from \cref{eq:weak-topology}.

\subsection{Approximate factorization property for $\alpha$-polynomial functions}
\begin{definition}
	Let $\Alg$ and $\AlgB$ be filtered, commutative, unital algebras 
	and let $\E:\Alg\rightarrow\AlgB$ be a 
	unital map.
	We say that $\E$ has \emph{approximate factorization property}
	if for all choices of $x_1,\dots,x_\ell\in\Alg$ we have that 
	\begin{equation}
	\label{eq:approximate-factorization-property}
	\deg_{\AlgB} \condKumu{\Alg}{\AlgB}(x_1,\dots,x_\ell) \leq \left( \deg_{\Alg} x_1 \right) + \cdots + \left( \deg_{\Alg} x_\ell \right)- 2(\ell-1).    
	\end{equation}
\end{definition}

We consider the filtered unital algebras $\Poly_\disjoint$ and
$\Poly$ from \cref{sec:polynomial-disjoint}, 
and as a conditional expectation between them we take the identity map:
\tikzexternaldisable
\begin{equation}
\label{eq:commutative-diagram-A}
\begin{tikzpicture}[node distance=2cm, auto, baseline=-0.8ex]
\node (A) {$\Poly_\disjoint$};
\node (B) [right of=A] {$\Poly.$};

\draw[->] (A) to node {$\id$} (B);
\end{tikzpicture}
\end{equation}
\tikzexternalenable
We denote by $\kumuDisjointPoint:=\kappa_{\Poly_\disjoint}^{\Poly}$ the conditional cumulants
related to the conditional expectation \eqref{eq:commutative-diagram-A}.

\medskip

We are now ready to state the main auxiliary result, 
proved very recently by the second-named author,
which will be necessary for the proof of the key tool, \cref{def:approx-factorization-charactersB}.

\begin{theorem}[{\cite[Theorem 2.3]{Sniady-AsymptoticsJack}}]
	\label{theo:factorization-of-characters}
	The identity map \eqref{eq:commutative-diagram-A}
	%
	has approximate factorization property.
	%
\end{theorem}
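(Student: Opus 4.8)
The plan is to verify the cumulant degree bound \eqref{eq:approximate-factorization-property} for the map $\E=\id$ of \eqref{eq:commutative-diagram-A} on a linear basis and then extend it by multilinearity of the conditional cumulant. By \cref{def:polynomial-functions} it suffices to treat arguments of the form $x_i=\gamma^{k_i}\Ch_{\pi_i}$. Since $\gamma\in\Laurent$ is a scalar that does not depend on the Young diagram, each factor $\gamma^{k_i}$ pulls out of both the pointwise product of $\Poly$ and the disjoint product of $\Poly_\disjoint$, hence out of $\kumuDisjointPoint$, raising the degree on the two sides of \eqref{eq:approximate-factorization-property} by the same amount $k_i$. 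I may therefore assume $k_i=0$ and reduce the statement to
\[
\deg \kumuDisjointPoint(\Ch_{\pi_1},\dots,\Ch_{\pi_\ell})
\ \le\ D-2(\ell-1),
\qquad D:=\sum_{i=1}^\ell\bigl(|\pi_i|+\ell(\pi_i)\bigr),
\]
where $D$ is exactly the sum of the degrees $\deg\Ch_{\pi_i}=|\pi_i|+\ell(\pi_i)$.

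First I would make the combinatorial meaning of $\kumuDisjointPoint$ explicit through moment--cumulant inversion. Writing, for a block $B$ of a set partition $P$ of $\{1,\dots,\ell\}$, the disjoint moment $x_B:=\Ch_{\prod_{i\in B}\pi_i}$ (a single Jack character, of degree $\sum_{i\in B}\deg\Ch_{\pi_i}$), one has
\[
\kumuDisjointPoint(\Ch_{\pi_1},\dots,\Ch_{\pi_\ell})
=\sum_{P}(-1)^{|P|-1}(|P|-1)!\ \prod_{B\in P}x_B ,
\]
where the outer product is the pointwise product of $\Poly$ and the sum runs over all set partitions $P$. This realizes the cumulant as the signed inclusion--exclusion \emph{connected part} of the family $(\Ch_{\pi_i})$. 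As a sanity check, each summand has degree at most $D$, because the disjoint product is degree-additive and the pointwise product is filtered; and since $\sum_P(-1)^{|P|-1}(|P|-1)!=0$ for $\ell\ge 2$, the degree-$D$ components cancel. However, this formal cancellation only yields a drop of $2$, and one checks that the degree-$(D-2)$ components need not cancel on the basis of filtration bookkeeping alone, so the genuine drop of $2(\ell-1)$ must come from elsewhere.

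The engine I would use is the combinatorial model of Jack characters of Dołęga and F\'eray \cite{DolegaFeray2014}, in which $\Ch_\pi(\lambda)$ is a generating series of (bipartite) maps drawn on surfaces --- orientable and non-orientable --- weighted by a power of $\gamma$ that records the ``measure of non-orientability'', and in which the filtration degree of each contributing term is controlled by an Euler-characteristic quantity of the underlying surface. In this language a pointwise product of characters becomes a superposition of maps on a common vertex set, the disjoint product becomes a disjoint union of maps, and the Möbius-weighted alternating sum above is precisely the inclusion--exclusion that retains only those superpositions in which the $\ell$ constituents are glued into a single \emph{connected} surface. Connecting $\ell$ pieces requires at least $\ell-1$ identifications, and each identification lowers the Euler characteristic --- and hence the degree --- by at least $2$; summing the penalties from any set of $\ell-1$ identifications that suffices to connect the pieces produces the required bound $D-2(\ell-1)$, uniformly over all surviving terms.

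The main obstacle is to make the step ``connected $\Rightarrow$ degree drops by $2(\ell-1)$'' rigorous in the Jack regime, where the maps are not orientable. Unlike the orientable case $\alpha=1$, the contribution of a map carries a nontrivial power of $\gamma$, and one must verify that this $\gamma$-weighting never offsets the Euler-characteristic deficit so as to push the degree back up: concretely, that for every connected configuration the total degree, \emph{including} the exponent of $\gamma$, is bounded by $D-2(\ell-1)$. This demands a careful accounting of vertices, edges and faces of the glued (possibly non-orientable) surface against the filtration degree $|\pi|+\ell(\pi)$, together with control of how the non-orientability weight interacts with each gluing. The Möbius cancellation of the preceding paragraph certifies only that disconnected contributions drop out; quantifying the size of the genus penalty in the presence of the $\gamma$-deformation is where the substance of the proof lies.
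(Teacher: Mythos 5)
You should first be aware that this paper contains no proof of the statement at all: \cref{theo:factorization-of-characters} is imported verbatim from \cite[Theorem 2.3]{Sniady-AsymptoticsJack} (``proved very recently by the second-named author''), so your attempt is in effect a reproof of a substantial external result rather than of an argument given here. Your formal scaffolding is correct as far as it goes: since $\gamma$ factors out of both the disjoint and the pointwise product, it factors out of $\kumuDisjointPoint$, so the reduction to arguments $x_i=\Ch_{\pi_i}$ is legitimate; your M\"obius-inversion formula for the conditional cumulants of the identity map \eqref{eq:commutative-diagram-A} is the right one; and you correctly diagnose that the cancellation of the degree-$D$ components (which itself already requires knowing that $\Ch_\pi\Ch_\sigma-\Ch_{\pi\sigma}$ has degree strictly below $\deg\Ch_\pi+\deg\Ch_\sigma$, i.e.\ \cite[Corollaries 2.5 and 3.8]{DolegaFeray2014}) falls far short of the required drop of $2(\ell-1)$.

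The genuine gap is twofold, and you partly name it yourself. First, the engine you propose does not exist as a theorem: the expansion of $\Ch_\pi(\lambda)$ as a generating series over non-orientable maps weighted by $\gamma$ raised to a ``measure of non-orientability,'' with filtration degree governed by an Euler-characteristic quantity, is at the required generality the still-open orientability conjecture of Do{\l}\k{e}ga and F\'eray; what is actually proved in \cite{Sniady-AsymptoticsJack} and quoted in this paper (\cref{lem:top-degree-jack-character}) is a closed formula for the \emph{top-degree part} of a single $\Ch_l$ only, which cannot control all degrees of all connected superpositions appearing in the cumulant. Second, even granting such a model, the decisive quantitative claim --- that each of the at least $\ell-1$ gluings needed for connectivity costs at least $2$ in filtration degree, and that the $\gamma$-exponent bookkeeping never offsets this deficit --- is precisely the content of the theorem; you assert it, flag it as the obstacle, and do not prove it. So what you have is an honest and well-organized program whose single essential lemma is missing, and which, as formulated, cannot currently be closed by citation either, since its foundational expansion is conjectural.
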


\subsection{Approximate factorization for $\alpha=1$}

We denote by $\Poly^{(1)}$ a version of the filtered algebra of $\alpha$-polynomial functions
$\Poly$ obtained by the specialization $\alpha:=1$, $\gamma:=0$; 
analogously we denote by $\Poly^{(1)}_\disjoint$ the algebra $\Poly^{(1)}$
equipped with the multiplication given by the disjoint product.

The following result has been proved earlier by the second-named author.
\begin{theorem}[{\cite[Theorem 15]{Sniady2006c}}]
	\label{theo:aprox-fact-property-oldversion}
	The identity map
	\tikzexternaldisable
	\begin{equation}
	\label{eq:commutative-diagram-Aprim}
	\begin{tikzpicture}[node distance=2.3cm, auto, baseline=-0.8ex]
	\node (A) {$\Poly^{(1)}_\disjoint$};
	\node (B) [right of=A] {$\Poly^{(1)}.$};
	
	\draw[<-,swap] (A) to node {$\id$} (B);
	\end{tikzpicture}
	\end{equation}
	\tikzexternalenable
	has approximate factorization property.
\end{theorem}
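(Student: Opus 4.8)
The plan is to treat $\Poly^{(1)}$ and $\Poly^{(1)}_\disjoint$ as two filtered-algebra structures on one and the same filtered vector space of $\alpha$-polynomial functions specialized at $\alpha=1$, $\gamma=0$, and to compare the pointwise product of $\Poly^{(1)}$ with the disjoint product of $\Poly^{(1)}_\disjoint$ through the conditional cumulants $\kappa_\ell$ of the identity map \eqref{eq:commutative-diagram-Aprim}. Since these cumulants are multilinear, it suffices to establish the bound \eqref{eq:approximate-factorization-property} when each argument is a basis element $x_i=\Ch_{\pi_i}$. I will use throughout that $\deg\Ch_\pi=|\pi|+\ell(\pi)$ and that the disjoint product is \emph{exactly} degree-additive on this basis, since $\Ch_\pi\disjoint\Ch_\sigma=\Ch_{\pi\sigma}$ and $\deg\Ch_{\pi\sigma}=\deg\Ch_\pi+\deg\Ch_\sigma$.

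The first step is the structural fact that the two products agree to leading order: for all partitions $\pi,\sigma$,
\[
\Ch_\pi\cdot\Ch_\sigma=\Ch_{\pi\sigma}+R_{\pi,\sigma},\qquad \deg R_{\pi,\sigma}\leq\deg\Ch_\pi+\deg\Ch_\sigma-2 .
\]
Equivalently, the associated graded algebra of $\Poly^{(1)}$ for the filtration by $\deg$ is canonically isomorphic to $\Poly^{(1)}_\disjoint$, so that the identity map becomes an honest algebra homomorphism after passing to the associated graded algebras. I would derive this from the partial-permutation model of Ivanov and Kerov: realizing $\Ch_\pi$ through partial permutations of a given cycle type, the pointwise product corresponds to composing permutations on the union of their supports and the disjoint product to keeping the supports disjoint; whenever the supports are forced to overlap, a standard genus (Euler-characteristic) count shows that $|\cdot|+\ell(\cdot)$ drops by an even integer, hence by at least $2$. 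By associativity the same holds for any number of factors, so the leading symbol of a pointwise product equals the disjoint product of the leading symbols.

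The second step passes from this pairwise estimate to the full cumulant bound $\deg\kappa_\ell(x_1,\dots,x_\ell)\leq\sum_i\deg x_i-2(\ell-1)$. I would start from the moment--cumulant inversion over the lattice of set partitions of $\{1,\dots,\ell\}$,
\[
\kappa_\ell(x_1,\dots,x_\ell)=\sum_{P}(-1)^{|P|-1}(|P|-1)!\;\mathop{\disjoint}_{B\in P}\Big(\prod_{i\in B}x_i\Big),
\]
where the inner product over each block $B$ is the pointwise product and the outer product over the blocks of $P$ is the disjoint product. By the first step every summand has the same leading symbol, namely the total disjoint product $\Ch_{\pi_1}\disjoint\cdots\disjoint\Ch_{\pi_\ell}$ of degree $\sum_i\deg x_i$; since $\sum_{P}(-1)^{|P|-1}(|P|-1)!=0$ for $\ell\geq2$, these leading symbols cancel, giving at once a nontrivial drop.

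The main obstacle is to show that this cancellation persists all the way down to degree $\sum_i\deg x_i-2(\ell-1)$, rather than only past the top degree. The mechanism is that each replacement of a pointwise product by a disjoint one costs a degree drop of $2$ (the remainder $R$) and \emph{connects} two groups of indices, and a contribution that survives the Möbius cancellation must connect all $\ell$ indices, which forces at least $\ell-1$ such replacements. I would make this precise by expanding every pointwise product as its disjoint part plus the remainder $R$, grouping the resulting terms by the connection pattern they induce on $\{1,\dots,\ell\}$, and checking that the sub-sum attached to each disconnected pattern vanishes by the identity $\sum_{P}(-1)^{|P|-1}(|P|-1)!=0$ applied to the connected components, while every connected pattern carries at least $\ell-1$ remainders and hence has degree at most $\sum_i\deg x_i-2(\ell-1)$. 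Finally, all hypotheses on the pair of products are symmetric under exchanging $\Poly^{(1)}$ and $\Poly^{(1)}_\disjoint$, so the same argument applies verbatim in the orientation of \eqref{eq:commutative-diagram-Aprim} and is consistent with \cref{theo:factorization-of-characters}; alternatively, one may recover the present statement by specialising that theorem at $\alpha=1$, $\gamma=0$.
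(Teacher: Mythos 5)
The paper contains no proof of this statement at all: it is imported wholesale from \cite[Theorem 15]{Sniady2006c}, so the only meaningful comparison is with the proof given in that earlier work. Measured against it, your proposal is correct in substance and reconstructs essentially the original route: \'Sniady's argument also works in the Ivanov--Kerov partial-permutation model, where the two ingredients are exactly your Step 1 (the pointwise and disjoint products of the $\Ch_\pi$ agree at top degree, with each overlap of supports forcing an even, hence at least $2$, drop of $|\cdot|+\ell(\cdot)$ --- your parity observation is sound, since the drop is congruent to $0$ modulo $2$ by multiplicativity of the sign) and your Step 2 (moment--cumulant inversion over the partition lattice plus a connectivity count). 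One piece of advice on Step 2: rather than iterating \emph{binary} replacements of pointwise by disjoint products --- where the remainders $R_{\pi,\sigma}$ are sums of many terms and the ``connection pattern'' is awkward to track through nested expansions --- expand all $\ell$ factors simultaneously in the partial-permutation model, grade each resulting term by the partition $Q$ of $\{1,\dots,\ell\}$ into connected components of the support-overlap pattern, and note that such a term occurs in $\mathop{\disjoint}_{B\in P}\bigl(\prod_{i\in B}x_i\bigr)$ precisely for the $P$ coarser than $Q$, with total weight $\sum_{P\geq Q}(-1)^{|P|-1}(|P|-1)!$, which vanishes unless $Q$ is the one-block partition; a globally connected overlap pattern then loses degree at least $2(\ell-1)$ by the Euler-characteristic count. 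This makes rigorous the ``each connection costs $2$'' mechanism you describe, and it is how the cancellation persists all the way down in the original argument.

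The one genuine slip is your closing alternative: the statement does \emph{not} follow by specializing \cref{theo:factorization-of-characters} at $\alpha=1$, $\gamma=0$. That theorem concerns the identity map $\Poly_\disjoint\to\Poly$, whereas here the arrow points the other way, from $\Poly^{(1)}$ to $\Poly^{(1)}_\disjoint$ --- a difference the paper itself stresses right after stating the two theorems (``the arrows \dots point in the opposite directions''). The two orientations produce genuinely different families of conditional cumulants (they agree up to sign only for $\ell=2$), and the approximate factorization property of a map does not formally transfer to its inverse; both orientations do hold in this setting, but each requires its own run of the connectivity argument with the roles of the two products exchanged, which your main two-step argument --- set up, correctly, with inner pointwise and outer disjoint products --- already provides for the orientation actually claimed.
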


Theorems \ref{theo:factorization-of-characters} and \ref{theo:aprox-fact-property-oldversion}
are of the same flavor. There are two major differences between them:
firstly, the arrows in \eqref{eq:commutative-diagram-A} and \eqref{eq:commutative-diagram-Aprim}
point in the opposite directions;
secondly, the algebra $\Poly$ is more rich than its specialized version $\Poly^{(1)}$,
in particular the variable $\gamma\in\Poly$ is not treated like a scalar since 
$\deg \gamma=1 >0$.

\subsection{Proof of \cref{def:approx-factorization-charactersB}, part
	\ref{item:afp2}}
\label{sec:detailed}

This proof follows closely its counterpart from the work of the second-named author
\cite[Theorem and Definition 1]{Sniady2006c}
with the references to \cref{theo:aprox-fact-property-oldversion}
replaced by \cref{theo:factorization-of-characters} and, occasionally,
the roles of $\Poly$ and $\Poly_\disjoint$ reversed.
We present the details below.

\subsubsection{Proof of the equivalence \ref{item:AFP-A}$\iff$\ref{item:disjoint-cumulants}.}

The quantities \eqref{eq:AFPPartitions} and \eqref{eq:AFPDisjointCumulants}
coincide with their counterparts from the work of the second-named author
\cite[Eqs.~(12) and (13)]{Sniady2006c}.
The equivalence of the conditions \ref{item:AFP-A} and \ref{item:disjoint-cumulants}
was proved in \cite[Section 4.7]{Sniady2006c}.

\subsubsection{Proof of the equivalence \ref{item:cumulants-classic-characters}$\iff$\ref{item:cumulants-classic-polynomial}.}
The implication \ref{item:cumulants-classic-polynomial}$\implies$\ref{item:cumulants-classic-characters}
is immediate since $\Ch_1,\Ch_2,\ldots\in\Poly$.

\smallskip

The following result was proved by the second-named author 
\cite[Corollary 19]{Sniady2006c} (note that the original paper 
does not contain the assumption \eqref{eq:lost-assumption}
without which it is not true). The proof 
did not use any specific properties of the filtered algebra $\Poly$
and thus it remains valid also in our context when the original 
\emph{algebra of polynomial functions} is replaced by 
\emph{the algebra of $\alpha$-polynomial functions}.
\begin{lemma}
	\label{lem:Sniady2006-recycled-zombie}
	Assume that $X\subseteq \Poly$ is a set with the property that 
	each $z\in\Poly$ can be expressed as a polynomial
	in the elements of $X$:
	\begin{equation}
	\label{eq:takie-jednomiany-lubie}
	z= \sum_{1\leq i \leq n}  z_{i,1} \cdots z_{i,l_i}    
	\end{equation}
	for some $n\geq 0$ and $z_{i,j}\in X$ in such a way that
	such that for each value of $1\leq i\leq n$ 
	\begin{equation}
	\label{eq:lost-assumption}
	\deg z \geq  \deg z_{i,1}+ \cdots +\deg z_{i,l_i};   
	\end{equation}
	in other words the degree of each monomial should be bounded from above by the degree of $z$.
	
	Under the above assumption, 
	if condition \ref{item:cumulants-classic-characters} holds true for all $x_1,\dots,x_\ell\in X$
	then more general condition \ref{item:cumulants-classic-polynomial} holds true for arbitrary $x_1,\dots,x_\ell\in \Poly$.
\end{lemma}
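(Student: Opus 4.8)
The plan is to derive the general statement \ref{item:cumulants-classic-polynomial} from the special case \ref{item:cumulants-classic-characters} using only two formal properties of cumulants: multilinearity and the product formula. First I would apply the decomposition hypothesis to each argument, writing every $x_k$ as a finite sum of monomials $\prod_j a_j$ with factors $a_j \in X$ whose degrees sum to at most $\deg x_k$ (this last bound is exactly \eqref{eq:lost-assumption}). Since the conditional cumulant $\kappa^{\chi_n}_{\ell}$ is multilinear in each of its $\ell$ arguments, the quantity $\kappa^{\chi_n}_{\ell}(x_1,\dots,x_\ell)$ expands into a finite sum of cumulants $\kappa^{\chi_n}_{\ell}(m_1,\dots,m_\ell)$ in which each argument $m_k$ is a single monomial in the elements of $X$. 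I would collect all the factors into one list $a_1,\dots,a_N$ and let $\sigma=\{\sigma_1,\dots,\sigma_\ell\}$ be the set partition of $\{1,\dots,N\}$ recording which monomial each factor belongs to, so that $m_k=\prod_{j\in\sigma_k} a_j$. Because the normalizing factor is the same in \ref{item:cumulants-classic-characters} and \ref{item:cumulants-classic-polynomial}, it suffices to prove convergence of each normalized summand separately.

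Next I would invoke the product formula for cumulants (the Leonov--Shiryaev identity), which is valid for any conditional cumulant of the type introduced in \cref{sec:conditional-cumulants} since it is a purely formal consequence of the definition via formal power series. It expresses a cumulant of products as
\[ \kappa^{\chi_n}_{\ell}(m_1,\dots,m_\ell)=\sum_{\tau}\ \prod_{B\in\tau}\kappa^{\chi_n}_{|B|}\big(a_j:j\in B\big), \]
where the sum runs over those set partitions $\tau$ of $\{1,\dots,N\}$ whose join $\tau\vee\sigma$ is the one-block partition, i.e.\ the partitions connected relative to the grouping $\sigma$. For each block $B$, hypothesis \ref{item:cumulants-classic-characters}, applied to the elements of $X$, guarantees that $\kappa^{\chi_n}_{|B|}(a_j:j\in B)$ divided by $n^{\frac12(\sum_{j\in B}\deg a_j-2(|B|-1))}$ converges to a finite limit.

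The heart of the argument is then the exponent bookkeeping. Multiplying the per-block estimates over $B\in\tau$ and writing $b=|\tau|$ for the number of blocks, the raw product $\prod_{B}\kappa^{\chi_n}_{|B|}(\dots)$ behaves like a convergent coefficient times $n$ raised to $\frac12\big(\sum_{j}\deg a_j-2N+2b\big)$. Two inequalities then force this to lie below the target exponent $\frac12\big(\deg x_1+\cdots+\deg x_\ell-2(\ell-1)\big)$: first, $\sum_j\deg a_j=\sum_k\sum_{j\in\sigma_k}\deg a_j\le\sum_k\deg x_k$ by the decomposition hypothesis; second, a $\sigma$-connected partition of an $N$-element set into $b$ blocks, with $\sigma$ having $\ell$ blocks, satisfies $b\le N-\ell+1$. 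The latter I would establish by counting on the bipartite incidence graph between the blocks of $\tau$ and of $\sigma$: connectivity forces at least $b+\ell-1$ edges, while the graph has at most $N$ edges. Consequently, after dividing by $n^{\frac12(\deg x_1+\cdots+\deg x_\ell-2(\ell-1))}$, each summand is a convergent factor times $n$ to a nonpositive power, hence converges; summing the finitely many terms gives the claim.

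The step I expect to be the real obstacle is precisely this degree accounting, rather than any analytic estimate: one must verify that the two slacks --- the degree defect of \eqref{eq:lost-assumption} and the connectivity bound $b\le N-\ell+1$ --- never overshoot the normalization, simultaneously for every connected partition $\tau$ and every monomial tuple. This is exactly where \eqref{eq:lost-assumption} is indispensable, as the remark preceding the lemma warns: without the assumption $\deg z\ge\sum_j\deg z_{i,j}$ a single monomial could carry $\sum_j\deg a_j>\deg x_k$ and produce a term of strictly positive exponent, which would diverge and destroy the conclusion.
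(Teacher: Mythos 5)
Your proof is correct and takes essentially the same approach as the paper's: the paper does not reprove this lemma but defers to \cite[Corollary 19]{Sniady2006c}, whose argument is precisely your combination of multilinearity, the formal Leonov--Shiryaev expansion of a cumulant of products over set partitions $\tau$ with $\tau\vee\sigma$ equal to the one-block partition, and the degree bookkeeping that pairs the monomial bound \eqref{eq:lost-assumption} with the connectivity estimate $|\tau|\leq N-\ell+1$. Your exponent accounting is sound (each normalized summand converges, to zero whenever either inequality is strict, and the decomposition is finite and independent of $n$), so there is nothing to add.
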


We claim that the set  $X=\{\gamma,\Ch_1,\Ch_2,\Ch_3,\dots\}$ generates the filtered algebra $\Poly$
in the way specified in \cref{lem:Sniady2006-recycled-zombie}. 
Indeed, by the way the filtration on $\Poly$ was defined (cf.~\cref{def:polynomial-functions})
it is enough to check the assumption of \cref{lem:Sniady2006-recycled-zombie} for
$z=\gamma^k \Ch_{\pi}$ for integer $k\geq 0$ and a partition $\pi=(\pi_1,\dots,\pi_\ell)$;
we will do it by induction over $\deg z=k+|\pi|+\ell$.
We write 
\[ z=  \gamma^k \Ch_{\pi_1}\cdots \Ch_{\pi_\ell} + \left[  \gamma^k \Ch_{\pi} - \gamma^k \Ch_{\pi_1}\cdots \Ch_{\pi_\ell} \right]. \]
The first summand on the right-hand side is of the form which is fits the framework
given by the right-hand side of \eqref{eq:takie-jednomiany-lubie}.
By \cite[Corollary 2.5, Corollary 3.8]{DolegaFeray2014}, the second summand on the right-hand side is of smaller degree,
thus the inductive hypothesis can be applied. This concludes the proof.

We claim that \eqref{eq:AFPCumulants-characters} holds true for all $x_1,\dots,x_\ell\in X$.
Indeed, in the case when $x_1,\dots,x_\ell\in\{\Ch_1,\Ch_2,\dots\}$ this is just the assumption 
\ref{item:cumulants-classic-polynomial}. Consider now the remaining case when $x_j=\gamma$
for some index $j$. For $\ell=1$ the corresponding expression  from  \eqref{eq:AFPCumulants}
is equal to 
\[
\kappa^{\chi_n}_{1}(\gamma)\ 
n^{- \frac{1}{2}} =\frac{\gamma}{\sqrt{n}}
\]
which by \eqref{eq:double-scaling-refined} converges to a finite limit, as required.
For $\ell\geq 2$, the corresponding cumulant 
\[ \kappa^{\chi_n}_{\ell}(\dots,\gamma,\dots)=0\]
vanishes because it involves a deterministc random variable $\gamma$.
It follows immediately that the limit \eqref{eq:AFPCumulants} exists.

In this way we verified that the assumptions of \cref{lem:Sniady2006-recycled-zombie} are fulfilled.
Condition \ref{item:cumulants-classic-polynomial} follows immediately.

\subsubsection{Proof of the equivalence \ref{item:cumulants-classic-polynomial}$\iff$\ref{item:disjoint-cumulants}.}
The proof will follow closely the ideas from \cite[Section 4.7]{Sniady2006c}
with the roles of the cumulants $\kappa^{\chi_n}$ and $\kappa^{\chi_n}_{\disjoint}$ interchanged.
The original proof was based on the observation that the conditional cumulants 
(denoted in the original work \cite{Sniady2006c} by the symbol $k^{\operatorname{id}}=\kappa_{\Poly^{(1)}}^{\Poly_\disjoint^{(1)}}$)
related to the map \eqref{eq:commutative-diagram-Aprim} fulfill the degree bounds \eqref{eq:approximate-factorization-property}
given by the approximate factorization property.
By changing the meaning of the symbol $k^{\operatorname{id}}$ and setting 
$k^{\operatorname{id}}:=\kappa^{\Poly}_{\Poly_\disjoint}=\kumuDisjointPoint$ 
to be the conditional cumulants related to the map \eqref{eq:commutative-diagram-A}
and by applying \cref{theo:factorization-of-characters}
we still have in our more general context that the cumulants $k^{\operatorname{id}}$ fulfill the degree bounds \eqref{eq:approximate-factorization-property}.
The reasoning from \cite[Section 4.7]{Sniady2006c} is still valid in our context.

\subsection{Functionals $\Sfunct_k$}

For a Young diagram $\lambda$, a real number $\alpha>0$ and an integer $k\geq 2$ we define
\[
\Sfunct_k^{(\alpha)}(\lambda):= 
(k-1) \iint_{(x,y)\in\lambda}  \left( \sqrt{\alpha}\ x-\frac{1}{\sqrt{\alpha}}\ y \right)^{k-2}  \dif x \dif y,   
\]
where the integral on the right-hand side
is taken over a polygon on the plane defined by the Young diagram $\lambda$
(drawn in the French convention).
In order to keep the notation light we shall make the dependence on $\alpha$ implicit and
we shall simply write $\Sfunct_k(\lambda):=\Sfunct_k^{(\alpha)}(\lambda)$.

\smallskip

In \cite[Proposition 4.6]{Sniady-AsymptoticsJack} the second-named author proved that 
$\Sfunct_k\in\Poly$ is an $\alpha$-polynomial function 
of degree $k$.

\subsection{Free cumulants $\Rfunct_k$}
\label{sub:FreeCumu}
In many calculations related to 
the asymptotic representation theory it is convenient to parametrize
the shape of the Young diagram~$\lambda$ by \emph{free cumulants}
$\Rfunct_k(\lambda)$ (which depend on the parameter $\alpha$ in our settings). In the context of the representation theory of the symmetric groups
these quantities have been introduced by Biane \cite{Biane1998}.

For the purposes of the current paper it is enough to know that
for a fixed Young diagram $\lambda$ its sequence of functionals of shape and its 
sequence of free cumulants are related to each other by the following simple systems of equations
\cite[Eqs.~(14) and (15)]{DolegaFeraySniady2008}:
\begin{align}
\label{eq:s-r}
\Sfunct_l &= \sum_{i\geq 1} \frac{1}{i!}  (l-1)^{\underline{i-1}} 
\sum_{\substack{k_1,\dots,k_i\geq 2 \\ k_1+\cdots+k_i=l }} \Rfunct_{k_1} \cdots
\Rfunct_{k_i}, \qquad l\geq 2,\\
\label{eq:r-s}
\Rfunct_{l} &=
\sum_{i\geq 1} \frac{1}{i!} (-l+1)^{i-1} \sum_{\substack{k_1,\dots,k_i\geq 2 \\
		k_1+\cdots+k_i=l }} \Sfunct_{k_1} \cdots \Sfunct_{k_i},\qquad  l\geq 2,
\end{align}
where we use a shorthand notation $\Sfunct_k=\Sfunct_k(\lambda)$,
$\Rfunct_k=\Rfunct_k(\lambda)$.
In fact, the above-cited papers \cite{Biane1998,DolegaFeraySniady2008}
concerned only the special isotropic case $\alpha=1$,
however the passage to the anisotropic case $\alpha\neq 1$ does not create any difficulties,
see the work of Lassalle \cite{Lassalle2009} (who used a different normalization constants)
as well as of the first-named author and F\'eray \cite{DolegaFeray2014} 
(whose normalization we use).

\subsection{The case $\alpha=1$.}
\label{sec:alpha-equal-1}
The main advantage of free cumulants lies in the combination of the following two facts.
\begin{itemize}
	\item Each free cumulant $\Rfunct_k$ of a given Young diagram $\lambda$ can be efficiently calculated
	\cite{Biane1998} and its dependence on the shape of $\lambda$ takes a particularly simple
	form (more specifically, ``$\Rfunct_k$ is a \emph{homogeneous} function'').
	
	\item The family $(\gamma,\Rfunct_2,\Rfunct_3,\dots)$ forms a convenient algebraic basis of the algebra $\Poly$
	with $\deg \gamma=1$ and $\deg \Rfunct_k=k$.
	In the special case $\alpha=1$
	(which corresponds to $\gamma=0$) the expansion of $\Ch_l$ in this basis 
	takes the following, particularly
	simple form:
	\begin{equation} 
	\label{eq:Kerov-polynomial-gamma0}
	\Ch_l=
	\Rfunct_{l+1} +
	(\text{terms of degree at most $l-1$}).
	\end{equation}
\end{itemize}

One of the consequences of \eqref{eq:Kerov-polynomial-gamma0} is that 
in the special case $\alpha=1$ the relationship announced in
\cref{def:approx-factorization-charactersB}\ref{item:equivalent-enhanced}
between the 
refined asymptotics of the four quantities \eqref{eq:AFPPartitions}--\eqref{eq:AFPDisjointCumulants} 
for $\ell=1$ takes the following, particularly
simple form. Assume that there exists some sequence $(a_l)$ with the property that
\begin{align}
\label{eq:quick-convergence-1}
\chi_n(l)              &= a_{l+1}\ n^{-\frac{l-1}{2}} + O\left(  n^{-\frac{l+1}{2}} \right) 
\qquad\text{for all $l\geq 1$}; \\
\intertext{note that it is a stronger version of \eqref{eq:nice-limit} with 
	$\operatorname{const}_2\equiv 0$; then}
\label{eq:quick-convergence-2}
\E_{\chi_n}(\Ch_l)     &= a_{l+1}\ n^{ \frac{l+1}{2}} + O\left(  n^{ \frac{l-1}{2}} \right)
\qquad\text{for all $l\geq 1$}; \\
\label{eq:quick-convergence-3}
\E_{\chi_n}(\Rfunct_{l+1})  &= a'_{l+1}\ n^{ \frac{l+1}{2}} + O\left(  n^{ \frac{l-1}{2}} \right)
\qquad\text{for all $l\geq 1$}, \\
\label{eq:quick-convergence-4}
\E_{\chi_n}(\Sfunct_{l})  &= a''_{l}\ n^{ \frac{l}{2}} + O\left(  n^{ \frac{l-2}{2}} \right)
\qquad \text{for each $l\geq 2$,}
\intertext{where}
\label{eq:what-is-prime}
a'_{l+1} &= a_{l+1} 
\intertext{and}
\label{eq:my-little-constants}
a''_l &= \sum_{i\geq 1} \frac{1}{i!}  (l-1)^{\underline{i-1}} 
\sum_{\substack{k_1,\dots,k_i\geq 2 \\ k_1+\cdots+k_i=l }} a'_{k_1} \cdots a'_{k_i},    
\end{align}
see \eqref{eq:s-r} for the last equality and \cite{Sniady2006c} for more details.

\subsection{Details of \cref{def:approx-factorization-charactersB} part \ref{item:afp2}  
	in the generic case $\alpha\neq 1$.}
\label{sec:co-sie-dzieje-alpha-neq-1}
The original proof of \cite[Theorem and Definition 1]{Sniady2006c}
was based on the idea of expressing various elements $F$ of the algebra
of $\alpha$-polynomial functions $\Poly^{(1)}$
(such as the characters $\Ch_\pi$ or the conditional
cumulants $\kumuDisjointPoint$ of such characters)
as polynomials in the basis $\Rfunct_2,\Rfunct_3,\dots$
of free cumulants and studying the top-degree of such polynomials, as we did in \cref{sec:alpha-equal-1}.
In our more general context of $\alpha\neq 1$ (or, in other words, $\gamma\neq 0$)
the corresponding polynomial for $F$
might have some \emph{extra terms} which depend additionally on the variable~$\gamma$. 
These \emph{extra terms} might influence the asymptotic behavior of random Young diagrams.
We shall discuss this issue in more detail in the remaining of this section
as well as in \cref{sec:extra-terms-evil}.

\subsubsection{The first-order asymptotics when $\alpha$ is constant}

The first-named author and F\'eray \cite[Proposition 3.7]{DolegaFeray2014}
proved that the degree of 
\begin{equation}
\label{eq:F1} 
F :=  \Ch_l    
\end{equation}
with respect to the filtration from \cref{sec:alpha-equal-1}
remains equal to $l+1$ even when we pass from $\Poly^{(1)}$ to $\Poly$
or, in other words, that the degree of the \emph{extra terms} is bounded from above by $l+1$.
In the asymptotics when $\alpha$ is a constant and does not depend on $n$
(or, more generally, when the constant $g$ from \eqref{eq:double-scaling-refined} fulfills $g=0$),
it follows that $\gamma=O(1) \ll O\left( n^{\frac{\degg \gamma}{2}}\right) $.
Since each extra term is divisible by the monomial $\gamma$, it follows 
that the contribution of the extra terms
is negligible when compared to the unique original top-degree term $\Rfunct_{l+1}$. 
It follows that in this asymptotics the relationships between the 
quantities $(a_l)$, $(a'_l)$ and $(a''_l)$ which provide the first-order asymptotics of, respectively,
the characters, the mean value of free cumulant, and the mean value of the functionals of shape, remain the same
as in \cref{sec:alpha-equal-1} for the case $\alpha=1$.

\medskip

From \cref{theo:factorization-of-characters} it follows that
an analogous result holds true for the conditional cumulant
(covariance)
\begin{equation}
\label{eq:F2} 
F:= \kumuDisjointPoint(\Ch_{l_1},\Ch_{l_2})
\end{equation}
and the degree of $F$ remains equal to $l_1+l_2$ 
when we pass from $\Poly^{(1)}$ to $\Poly$.
A reasoning similar to the one above implies that
not only the proof of \cite[Theorem and Definition 1]{Sniady2006c}
remains valid in our context for $\ell=2$, but 
\emph{also the relationships between the 
	quantities \eqref{eq:AFPPartitions}--\eqref{eq:AFPDisjointCumulants}
	which provide the first-order asymptotics of cumulants
	remain the same as in \cite[Theorem 3]{{Sniady2006c}}.}

Anticipating the proof of \cref{theo:clt},
the above considerations imply the following explicit description
of the covariance of the limiting Gaussian process $\Delta_\infty$.

\begin{corollary}
	The covariance of the Gaussian process
	$\Delta_\infty$
	describing the limit fluctuations of random Young diagrams from \cref{theo:clt}
	coincides with its counterpart 
	for $\alpha=1$ from
	\cite[Theorem 3]{Sniady2006c}.
\end{corollary}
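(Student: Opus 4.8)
The plan is to reduce the claimed coincidence of covariances to the structural fact, already established in the preceding paragraphs, that the \emph{extra terms} carrying a factor of $\gamma$ are asymptotically negligible when $g=0$. Concretely, the covariance of $\Delta_\infty$ is encoded by the $\ell=2$ relationships among the four families \eqref{eq:AFPPartitions}--\eqref{eq:AFPDisjointCumulants}, and I would argue that each of these relationships reduces, in the limit, to its $\alpha=1$ counterpart from \cite[Theorem 3]{Sniady2006c}. The starting observation is \eqref{eq:F2}: by \cref{theo:factorization-of-characters} the conditional cumulant $\kumuDisjointPoint(\Ch_{l_1},\Ch_{l_2})$ is an $\alpha$-polynomial function whose degree, when expressed in the basis $(\gamma,\Rfunct_2,\Rfunct_3,\dots)$ of $\Poly$, is at most $l_1+l_2$, matching the degree it has in the specialized algebra $\Poly^{(1)}$.

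First I would fix the precise dictionary between the limit of the covariance-type quantity in \eqref{eq:AFPCumulants} for $\ell=2$ and the coefficients of the top-degree part of $\kumuDisjointPoint(\Ch_{l_1},\Ch_{l_2})$ expanded in free cumulants; this is exactly the bookkeeping already carried out in \cref{sec:alpha-equal-1} and \cref{sec:co-sie-dzieje-alpha-neq-1}. Next I would isolate the contributions to this top-degree part: there is the \emph{genuine} top-degree term, which is a polynomial in $\Rfunct_2,\Rfunct_3,\dots$ alone and is literally identical to the one appearing for $\alpha=1$, plus possible \emph{extra terms} each divisible by the monomial $\gamma$. The crucial estimate, established for $F=\Ch_l$ in \cref{sec:co-sie-dzieje-alpha-neq-1} and transported to $F=\kumuDisjointPoint(\Ch_{l_1},\Ch_{l_2})$ via \cref{theo:factorization-of-characters}, is that every extra term is divisible by $\gamma$ and hence, under the hypothesis $g=0$ (so that $\gamma=O(1)$ while the homogeneous free-cumulant contribution scales like $n^{\deg/2}$), contributes at an order of magnitude strictly smaller than $n^{-\frac{1}{2}}$ relative to the genuine term. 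Therefore, in the limit $n\to\infty$, only the genuine $\gamma$-free top-degree term survives, and it agrees verbatim with the $\alpha=1$ computation.

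Then I would assemble these observations: since the limiting value of \eqref{eq:AFPCumulants} for every pair $(\Ch_{l_1},\Ch_{l_2})$ coincides with its $\alpha=1$ value, and since by \cref{def:approx-factorization-charactersB}\ref{item:afp2} this family of $\ell=2$ limits is exactly what determines the covariance of $\Delta_\infty$ (via the test functions $Y_k$ of \cref{remark:theo:clt} expressed through the functionals $\Sfunct_k$ and free cumulants), the covariance itself must coincide with its counterpart from \cite[Theorem 3]{Sniady2006c}. The passage from the $\Ch_l$ basis to the polynomial test functions $Y_k$ uses only the relations \eqref{eq:s-r} and \eqref{eq:r-s} between $\Sfunct$ and $\Rfunct$, which are $\alpha$-independent in form, so no new $\gamma$-dependence is introduced at this stage.

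The main obstacle I anticipate is making rigorous the claim that the extra terms are \emph{uniformly} negligible across the (infinitely many) pairs $(l_1,l_2)$ and that the degree bound from \cref{theo:factorization-of-characters} is genuinely saturated only by the $\gamma$-free part—in other words, verifying that no extra term of the maximal degree $l_1+l_2$ carrying a factor of $\gamma$ can arise. This is precisely the point where the analogue of \cite[Proposition 3.7]{DolegaFeray2014} for the covariance is needed; I would expect to invoke it (or reprove the relevant degree estimate for $\kumuDisjointPoint(\Ch_{l_1},\Ch_{l_2})$) to confirm that any $\gamma$-term has degree strictly below the governing homogeneous term, so that under $g=0$ its rescaled contribution vanishes in the limit. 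Once this degree separation is secured, the coincidence of covariances follows formally.
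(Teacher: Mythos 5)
Your proposal follows essentially the same route as the paper: invoke \cref{theo:factorization-of-characters} to bound the degree of $\kumuDisjointPoint(\Ch_{l_1},\Ch_{l_2})$ by $l_1+l_2$, observe that all \emph{extra terms} are divisible by $\gamma$, argue that under $g=0$ these are asymptotically negligible, and conclude that the $\ell=2$ relationships among \eqref{eq:AFPPartitions}--\eqref{eq:AFPDisjointCumulants} — and hence the covariance of $\Delta_\infty$ — coincide with their $\alpha=1$ counterparts from \cite[Theorem 3]{Sniady2006c}.

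One correction, however, to your final paragraph: the ``degree separation'' you flag as the main obstacle — that no $\gamma$-divisible term of the maximal degree $l_1+l_2$ can occur — is not needed, and it is in fact \emph{false} already for $F=\Ch_l$: in \eqref{eq:kerov-polynomial-subtle} the terms $\gamma\,\Rfunct_{k_1}\cdots\Rfunct_{k_i}$ with $k_1+\cdots+k_i=l$ have the maximal degree $l+1$, and one should expect the analogous phenomenon for the conditional cumulant \eqref{eq:F2}. The mechanism that actually makes the argument work (and which you state correctly yourself in your second paragraph) is different: the filtration charges $\gamma$ with degree $1$, i.e.\ a budget of order $n^{1/2}$, whereas under $g=0$ one actually has $\gamma=O(1)$. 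Consequently \emph{every} monomial $\gamma^{k}\Rfunct_{k_1}\cdots\Rfunct_{k_i}$ with $k\geq 1$ and $k+k_1+\cdots+k_i\leq l_1+l_2$ contributes $O\bigl(n^{(l_1+l_2-k)/2}\bigr)=o\bigl(n^{(l_1+l_2)/2}\bigr)$ after applying $\E_{\chi_n}$ and the normalization from \eqref{eq:AFPCumulants}, \emph{regardless of whether its total degree is maximal}; so the degree bound of \cref{theo:factorization-of-characters} together with $\gamma$-divisibility of the extra terms already suffices, and attempting to prove a strict degree drop for the $\gamma$-terms would lead you to a dead end. Likewise, no uniformity over the pairs $(l_1,l_2)$ is required: the covariance of $\Delta_\infty$ is determined by the finite-dimensional distributions of the variables $Y_k$ from \cref{remark:theo:clt}, so each pair is handled separately. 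With these two points repaired, the remainder of your assembly — the $\gamma$-free top-degree part has rational coefficients independent of $\alpha$ and specializes at $\gamma=0$ to the $\alpha=1$ expansion, whence the limits for the pairs $(\Ch_{l_1},\Ch_{l_2})$ agree verbatim with the computation of \cite[Theorem 3]{Sniady2006c} — is exactly the paper's proof.
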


\subsubsection{The first-order asymptotics in the double scaling limit}
\label{sec:first-order-for-g-neq0}

In the asymptotics when $\alpha=\alpha(n)$ depends on $n$ 
in a way described in \cref{sec:hypothesis-double-scaling} with $g\neq 0$, 
the \emph{extra terms} in both examples \eqref{eq:F1}, \eqref{eq:F2}  
considered above are of the same 
order as the original terms. It follows that 
the relationship between 
the quantities $(a_l)$, $(a'_l)$ and $(a''_l)$ is altered
and depends on the constant $\sconstant$ from \eqref{eq:double-scaling-refined}, see \cref{subsubsect:DoubleScaling} below.
Also the covariance of the Gaussian process
describing the fluctuations of random Young diagrams is altered;  finding an explicit form for this covariance
is currently beyond our reach because no closed formula 
for the top-degree part of
the conditional cumulant $\kumuDisjointPoint(\Ch_{l_1},\Ch_{l_2})$
(an analogue of the results of \cite[Section 1]{Sniady-AsymptoticsJack} for $\Ch_n$)
is available. 

\subsection{Refined asymptotics of characters}
\label{subsec:refinedasymptotics}
In order to find more subtle relationships between the asymptotics of various
quantities appearing in \cref{def:approx-factorization-charactersB}
we need an analogue of Equation \eqref{eq:Kerov-polynomial-gamma0}
between the character $\Ch_l$ and the free cumulants in the generic case $\alpha\neq 1$.
We present below two such formulas: the one from \cref{sec:rough-estimate}
is conceptually simpler and will be sufficient for 
the scaling when $\alpha$ is fixed;
in the case of the double scaling limit we will need a more involved
formula from \cref{sec:top-degree-by-Sniady}.

\subsubsection{The rough estimate}
\label{sec:rough-estimate}

We start with the formula expressing the top-degree part of the
normalized Jack character $\Ch_l$ modulo terms divisible by $\gamma^2$
which follows follows from 
\cite[Section 11]{Lassalle2009} 
combined with the degree bounds of the first-named author and F\'eray
\cite[Proposition 3.7]{DolegaFeray2014}
as well as from \cite[Theorem A.3]{Sniady-AsymptoticsJack}:
\begin{multline}
\label{eq:kerov-polynomial-subtle}
\Ch_l=
\bigg[ \Rfunct_{l+1}+ \\
+\gamma \sum_{i\geq 1} \sum_{k_1+\cdots+k_i=l}
\frac{l}{i} 
(k_1-1) \cdots (k_i-1) \Rfunct_{k_1} \cdots \Rfunct_{k_i}+ \\
{+ (\text{terms divisible by $\gamma^2$ })\bigg]+ }
\\
(\text{terms of degree at most $l-1$}).
\end{multline}

\subsubsection{Closed formula for top-degree part of Jack characters}
\label{sec:top-degree-by-Sniady}
Let us fix an integer $l\geq 1$.
We will view the symmetric group $\Sym{l}$ as the set of permutations of the set
$[l]:=\{1,\dots,l\}$ and its subgroup 
\[\Sym{l-1}:=\big\{ \sigma\in\Sym{l} : \sigma(l)=l \big\}\] 
as the set of permutations of the same set $[l]$ which have $l$ as a fixpoint.
Consider the set 
\begin{multline*}
\mathcal{X}_l= \big\{ (\sigma_1,\sigma_2) \in \Sym{l}\times \Sym{l} : \\
\text{the group generated by $\sigma_1,\sigma_2$ acts transitively on the set $[l]$}
\big\}.   
\end{multline*}
The group $\Sym{l-1}$ acts on $\mathcal{X}_l$ by coordinate-wise conjugation:
\[ \pi\cdot (\sigma_1,\sigma_2) := \big(\pi \sigma_1 \pi^{-1}, \pi \sigma_2 \pi^{-1}\big). \]
The orbits of this action define an equivalence relation $\sim$ on $\mathcal{X}_l$;
the corresponding equivalence classes have a natural combinatorial interpretation as
\emph{non-labeled, rooted, bicolored, oriented maps with $l$ edges} 
which is out of scope of the current paper
(see \cite[Section 1.4.3]{Sniady-AsymptoticsJack} for details).

\medskip

For a permutation $\pi$ we denote by $C(\pi)$ the set of its cycles.

We say that a triple $(\sigma_1,\sigma_2,q)$ is an \emph{`expander'}
\cite[Appendix A.1]{Sniady-AsymptoticsJack}, see also \cite{DolegaFeraySniady2008}, 
if $\sigma_1,\sigma_2\in\Sym{l}$ are permutations
and $q\colon C(\sigma_2)\to\{2,3,\dots\}$ is a function on the set of cycles of $\sigma_2$
with the following two properties:
\[ \sum_{c\in C(\sigma_2)} q(c) = |C(\sigma_1)|+|C(\sigma_2)|\]
and for every set $A\subset C(\sigma_2)$ such that $A\neq \emptyset$ and $A\neq C(\sigma_2)$
we have that
\begin{multline*}
\#\big\{ c\in C(\sigma_1): \text{$c$ intersects at least one of the cycles in $A$} \big\} > \\
\sum_{d\in A} \big[ q(d)-1  \big].
\end{multline*}

The following is a refined version of the formula
\eqref{eq:kerov-polynomial-subtle}.
\begin{lemma}[{\cite[Theorem A.3 and Theorem 1.6]{Sniady-AsymptoticsJack}}]
	\label{lem:top-degree-jack-character}
	For each integer $l\geq 1$ the expansion of the character
	$\Ch_l$ as a polynomial in the variables $\gamma,\Rfunct_2,\Rfunct_3,\dots$ is given by
	\begin{multline}
	\label{eq:Sniady-proved-this}
	\Ch_l= \\
	\sum_{[(\sigma_1,\sigma_2)]\in \mathcal{X}_l/\sim} \gamma^{l+1-|C(\sigma_1)|-|C(\sigma_2)|} 
	\sum_{\substack{q\colon C(\sigma_2)\to\{2,3,\dots\} \\
			\text{$(\sigma_1,\sigma_2,q)$ is an expander}}}
	\prod_{c\in C(\sigma_2)} \Rfunct_{q(c)} + \\
	+\text{(terms of degree at most $l-1$)}.
	\end{multline}
	where the first sum runs over the representatives of the equivalence clases.
\end{lemma}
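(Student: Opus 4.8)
The statement is quoted verbatim from \cite[Theorem A.3 and Theorem 1.6]{Sniady-AsymptoticsJack}, so strictly speaking nothing needs to be reproved here; nevertheless let me outline the strategy underlying those results. The plan is not to attack the character $\Ch_l$ directly, but to pass through an explicit combinatorial (map) expansion and then extract its top-degree part in the free-cumulant basis.

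First I would invoke a Stanley--F\'eray type formula for the normalized Jack character, in the form established by Lassalle \cite{Lassalle2009} and by the first-named author and F\'eray \cite{DolegaFeray2014}: evaluated in the multirectangular coordinates of $\lambda$, the character $\Ch_l$ is a polynomial whose monomials are indexed by pairs of permutations $(\sigma_1,\sigma_2)\in\Sym{l}\times\Sym{l}$, each weighted by a power of $\gamma$ that records the non-orientability defect of the associated bicolored map. The $\Sym{l-1}$-conjugation action collapses these pairs into the orbits $\mathcal{X}_l/\!\sim$ indexing the outer sum in \eqref{eq:Sniady-proved-this}, and the transitivity constraint in the definition of $\mathcal{X}_l$ is exactly what guarantees that the corresponding term survives at top degree rather than factoring through a product of lower characters.

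Next I would convert from the functionals of shape to free cumulants. Each $\Sfunct_k$ is homogeneous of degree $k$ and admits its own map expansion; combining this with the change of basis \eqref{eq:r-s} rewrites each monomial $\prod_{c}\Rfunct_{q(c)}$ in terms of the underlying map data. The degree bound of \cite[Proposition 3.7]{DolegaFeray2014}, which ensures that $\deg\Ch_l=l+1$ survives the passage from $\Poly^{(1)}$ to $\Poly$, then lets me discard every contribution of degree at most $l-1$ and retain only the top-degree stratum. A consistency check at the first non-trivial order in $\gamma$ is supplied by \eqref{eq:kerov-polynomial-subtle}: the coefficient of $\gamma^0$ must reduce to $\Rfunct_{l+1}$ (this is Biane's planar case \cite{Biane1998}), and the orbits with $|C(\sigma_1)|+|C(\sigma_2)|=l$ must reproduce the explicit $\gamma^1$-term of \eqref{eq:kerov-polynomial-subtle}.

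The main obstacle will be the combinatorial identification of the top-degree coefficient, in which two things must be matched simultaneously. The exponent of $\gamma$ I would pin down by an Euler-characteristic computation, showing that $l+1-|C(\sigma_1)|-|C(\sigma_2)|$ equals the genus/non-orientability invariant of the bicolored map associated with the orbit $[(\sigma_1,\sigma_2)]$. The free-cumulant monomial, on the other hand, forces the selection of exactly those data for which $(\sigma_1,\sigma_2,q)$ is an \emph{expander}. Establishing this \emph{expander} inequality is the technical heart of \cite[Appendix A]{Sniady-AsymptoticsJack}, relying also on the earlier map combinatorics of \cite{DolegaFeraySniady2008}: it is precisely the condition that no proper nonempty subset $A\subset C(\sigma_2)$ of white vertices can be contracted without strictly lowering the degree, so that the monomial $\prod_{c}\Rfunct_{q(c)}$ genuinely contributes at the top degree instead of being absorbed into the $\text{(terms of degree at most $l-1$)}$. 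Verifying that the surviving expanders are in bijective correspondence with the claimed sum, uniformly in the $\gamma$-grading, is where the bulk of the work lies.
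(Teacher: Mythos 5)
The paper contains no proof of this lemma at all: it is imported verbatim, as a black box, from \cite[Theorem A.3 and Theorem 1.6]{Sniady-AsymptoticsJack}, so your opening observation that nothing needs to be reproved here matches the paper exactly, and to that extent your proposal and the paper agree.

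The sketch you then give of the cited argument, however, has a genuine gap at its very first step. You propose to start from ``a Stanley--F\'eray type formula for the normalized Jack character, in the form established by Lassalle \cite{Lassalle2009} and by the first-named author and F\'eray \cite{DolegaFeray2014}'', in which monomials indexed by pairs of permutations carry a power of $\gamma$ recording a non-orientability defect. No such formula is established: this is precisely Lassalle's conjecture, later refined by Dołęga and F\'eray into the conjectural ``measure of non-orientability'' expansion, and it remains open for generic $\alpha$. What those references actually supply is polynomiality in the relevant gradings and the degree bound $\deg \Ch_l = l+1$ (\cite[Proposition 3.7]{DolegaFeray2014}), together with the explicit low-order terms in $\gamma$ that the present paper records in \eqref{eq:kerov-polynomial-subtle}. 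Deducing \eqref{eq:Sniady-proved-this} from the conjectural Stanley-type formula would moreover be circular, since the top-degree stratum of that conjecture is essentially what \cite[Theorem 1.6]{Sniady-AsymptoticsJack} proves unconditionally; the argument there runs in the opposite direction, pinning down the top-degree coefficients from the degree bounds and a characterization of Jack characters, with Theorem A.3 then recasting the resulting map expansion into the expander/free-cumulant form quoted in the lemma. Your remarks on transitivity (disconnected pairs feed into products of lower characters) and on reading the exponent $l+1-|C(\sigma_1)|-|C(\sigma_2)|$ as an Euler-characteristic-type invariant are reasonable heuristics, but as a proof outline the proposal rests its entire expansion on an unproven conjecture and therefore does not reconstruct the cited result.
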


\subsection{Proof of part \ref{item:equivalent-enhanced} of \cref{def:approx-factorization-charactersB}}
\label{sec:extra-terms-evil}

\subsubsection{The scaling when $\alpha$ is constant}
\label{sec:concrete-alpha-constant}

Part \ref{item:equivalent-enhanced} of \cref{def:approx-factorization-charactersB} concerns the trivial 
case $\ell=1$ which one can easily prove from scratch, based on \eqref{eq:kerov-polynomial-subtle}.
We shall present a detailed proof only for a specific case which will be useful in applications
(more specifically, for the proof of \cref{theo:clt} from \cref{sec:proof-of-CLTCLT}) and we shall 
assume that the refined asymptotics of characters specified in
part \ref{item:equivalent-enhanced} of \cref{def:approx-factorization-charactersB}
holds true for the quantity \eqref{eq:AFPPartitions}. The other implications are
analogous.

For a specific choice of the constants
\begin{align}
\label{eq:quick-convergence-1-B}
\chi_n(l)              &= a_{l+1}\ n^{-\frac{l-1}{2}} + b_{l+1} \ n^{-\frac{l}{2}} + o\left(  n^{-\frac{l}{2}} \right)
\qquad \text{for $l\geq 1$}\\
\intertext{for some sequences $(a_l)$ and $(b_l)$,
	it is a simple exercise to use \eqref{eq:what-is-jack-character-zmiana} and \eqref{eq:kerov-polynomial-subtle}
	in order to show that}
\label{eq:quick-convergence-2-B}
\E_{\chi_n}(\Ch_l)     &= a_{l+1}\ n^{ \frac{l+1}{2}} + b_{l+1}\ n^{\frac{l}{2}}+ o\left(  n^{ \frac{l}{2}} \right)
\qquad \text{for $l\geq 1$,}\\
\label{eq:quick-convergence-3-B}
\E_{\chi_n}(\Rfunct_{l+1})  &= a'_{l+1}\ n^{ \frac{l+1}{2}} + b'_{l+1} \ n^{\frac{l}{2}} + o\left(  n^{ \frac{l}{2}} \right)
\qquad \text{for $l\geq 1$,}
\end{align}
where $(a'_l)$ is given again by \eqref{eq:what-is-prime} and $(b'_l)$ is the unique sequence which fulfills
\begin{equation}
\label{eq:co-sie-dzieje-z-b}
b_{l+1}= b'_{l+1}+\gamma \sum_{i\geq 1} \sum_{k_1+\cdots+k_i=l}
\frac{l}{i} 
(k_1-1) \cdots (k_i-1) a_{k_1} \cdots a_{k_i}.
\end{equation}
In particular, \eqref{eq:quick-convergence-2-B} shows that the
refined asymptotics of characters specified in
part \ref{item:equivalent-enhanced} of \cref{def:approx-factorization-charactersB}
holds true for the quantity \eqref{eq:AFPCumulants-characters}.

\medskip

Consider now the quantity under the limit symbol in \eqref{eq:AFPCumulants}
for $\ell=1$ and for the specific choice of $x_1=\Sfunct_l$.
Equation \eqref{eq:s-r} implies that
the refined asymptotics of characters specified in
part \ref{item:equivalent-enhanced} of \cref{def:approx-factorization-charactersB}
holds true for the quantity \eqref{eq:AFPCumulants}:
\begin{equation}
\label{eq:quick-convergence-4-B}
\E_{\chi_n}(\Sfunct_{l})  = a''_{l}\ n^{ \frac{l}{2}} + b''_{l} \ n^{\frac{l-1}{2}} + o\left(  n^{ \frac{l-1}{2}} \right)
\qquad \text{for each $l\geq 2$,}
\end{equation}
with the constants given by \eqref{eq:my-little-constants} and
\begin{equation}
\label{eq:what-is-b-bis}
b''_l = \sum_{i\geq 1} \frac{1}{(i-1)!}  (l-1)^{\underline{i-1}} 
\sum_{\substack{k_1,\dots,k_i\geq 2 \\ k_1+\cdots+k_i=l }} b'_{k_1} a'_{k_2} \cdots a'_{k_i}.    
\end{equation}

We conclude the proof by pointing out that for $\ell=1$ the expression under the limit symbol in \eqref{eq:AFPCumulants}
coincides with its counterpart from \eqref{eq:AFPDisjointCumulants}.
\qed

\begin{remark}
	One can see that generically for $\alpha\neq 1$ and $\gamma\neq 0$
	(even if the initial characters $\chi_n(l)$ have small subleading terms which corresponds to
	$b_l\equiv 0$) the subleading terms in
	\eqref{eq:quick-convergence-4-B}
	are much bigger than their counterparts for $\alpha=1$ from \eqref{eq:quick-convergence-4},
	namely they are of order $\frac{1}{\sqrt{n}}$ times the leading asymptotic term.
	As we shall see in \cref{sec:proof-of-CLTCLT}, 
	this leads to non-centeredness of the limiting Gaussian process $\Delta_\infty$. 
\end{remark}

\subsubsection{The double scaling limit}
\label{subsubsect:DoubleScaling}

In the double scaling limit $\sconstant\neq 0$ considered in \cref{sec:hypothesis-double-scaling}
the reasoning presented in \cref{sec:concrete-alpha-constant} above remains valid
if one replaces all the references to \eqref{eq:kerov-polynomial-subtle}
by \cref{lem:top-degree-jack-character}.
Note, however, that the relationship \eqref{eq:what-is-prime}
in this new context takes the form
\begin{equation*}
a_{l+1}= \sum_{[(\sigma_1,\sigma_2)]\in \mathcal{X}_l/\sim} g^{l+1-|C(\sigma_1)|-|C(\sigma_2)|} 
\sum_{\substack{q\colon C(\sigma_2)\to\{2,3,\dots\} \\
		\text{$(\sigma_1,\sigma_2,q)$ is an expander}}}
\prod_{c\in C(\sigma_2)} a'_{q(c)},
\end{equation*}
while \eqref{eq:co-sie-dzieje-z-b} takes the form
\begin{multline*} 
b_{l+1}=
\\
\shoveleft{\sum_{[(\sigma_1,\sigma_2)]\in \mathcal{X}_l/\sim} 
	\big({l+1-|C(\sigma_1)|-|C(\sigma_2)|}\big) 
	\ \sconstant' \ \sconstant^{l-|C(\sigma_1)|-|C(\sigma_2)|} \times} 
\\ 
\shoveright{\times
	\sum_{\substack{q\colon C(\sigma_2)\to\{2,3,\dots\} \\
			\text{$(\sigma_1,\sigma_2,q)$ is an expander}}}
	\prod_{c\in C(\sigma_2)} a'_{q(c)}+} \\
\shoveleft{
	+\sum_{[(\sigma_1,\sigma_2)]\in \mathcal{X}_l/\sim} \sconstant^{l+1-|C(\sigma_1)|-|C(\sigma_2)|} 
	\times 
} 
\\ 
\times
\sum_{\substack{q\colon C(\sigma_2)\to\{2,3,\dots\} \\
		\text{$(\sigma_1,\sigma_2,q)$ is an expander}}}
\sum_{c \in C(\sigma_2)} b'_{q(c)}\prod_{c'\in
	C(\sigma_2)\setminus\{c\}} a'_{q(c')}.
\end{multline*}

\section{Technical results}
\label{sec:moments-determine-measure}

This section is devoted to some technical results necessary for the 
proof of \cref{theo:lln}.

\subsection{Slowly growing sequence of moments determines the measure}

\begin{lemma}
	\label{lem:Carleman}
	Assume that $\mu$ is a probability measure which is supported on the interval $[x_0,\infty)$
	(respectively, on the interval $(-\infty,x_0]$) for some $x_0\in\R$ and such
	that
	\begin{equation}
	\label{eq:moments-slow}
	| m_{l} | \leq C^l\ l^{2l}    
	\end{equation}
	holds true for some constant $C$ and all integers $l\geq 1$, where
	\[ m_l = m_l(\mu) = \int_\R x^l \dif\mu \]
	is the $l$-th moment of $\mu$.
	
	Then the measure $\mu$ is uniquely determined by its moments.
	
	\medskip
	
	Similarly, if the measure $\mu$ is supported on the real line $\R$ and such that
	\begin{equation}
	\label{eq:moments-slow'}
	| m_{l} | \leq C^l\ l^{l}    
	\end{equation}
	holds true for some constant $C$ and all integers $l\geq 1$, then the measure $\mu$ is uniquely determined by its moments.
\end{lemma}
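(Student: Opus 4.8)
The tool I would reach for is \emph{Carleman's condition} for determinacy of the moment problem, which exists in two flavors matching exactly the two parts of the statement. The plan is, in each case, to verify that the relevant Carleman series diverges; the different growth exponents $l^{2l}$ and $l^{l}$ in the two hypotheses are precisely what makes each series diverge, and the only genuine subtlety is keeping track of which version of the criterion applies to which support.

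For the second part (support equal to $\R$), I would invoke the Hamburger form of Carleman's condition: a probability measure on $\R$ is uniquely determined by its moments provided $\sum_{l\geq 1} m_{2l}^{-1/(2l)} = \infty$. Here one only looks at the even moments, which are automatically non-negative, so no absolute values are lost. From the hypothesis $|m_l| \leq C^l l^l$ one gets $m_{2l} \leq C^{2l}(2l)^{2l}$, hence $m_{2l}^{1/(2l)} \leq 2Cl$ and $m_{2l}^{-1/(2l)} \geq \frac{1}{2Cl}$. The resulting series is a constant multiple of the harmonic series and therefore diverges, which gives determinacy.

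For the first part (support on a half-line), the Hamburger condition is too weak: with $m_l \leq C^l l^{2l}$ one would only obtain $m_{2l}^{-1/(2l)} \gtrsim l^{-2}$, a convergent series. Instead I would use the Stieltjes form of Carleman's condition, which applies to measures on $[0,\infty)$ and requires only $\sum_{l\geq 1} m_l^{-1/(2l)} = \infty$, summing over all moments rather than the even ones alone. First I would reduce to the case $x_0=0$: replacing $\mu$ by its pushforward $\nu$ under $x\mapsto x-x_0$ (and, for support $(-\infty,x_0]$, composing first with the reflection $x\mapsto -x$) produces a measure on $[0,\infty)$ whose moments $\tilde m_l = \int_\R (x-x_0)^l \dif\mu$ satisfy the same kind of bound. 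Indeed, expanding by the binomial theorem and using $k^{2k}\leq l^{2l}$ for $k\leq l$ gives $|\tilde m_l| \leq (|x_0|+C)^l\, l^{2l}$. Since $\nu$ is supported on $[0,\infty)$ its moments are non-negative, so $\tilde m_l^{1/(2l)} \leq (|x_0|+C)^{1/2}\, l$ and $\tilde m_l^{-1/(2l)} \geq \frac{1}{(|x_0|+C)^{1/2}\, l}$; again the series diverges, and Stieltjes--Carleman yields that $\nu$, hence $\mu$, is determinate.

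The only step requiring a little care is the binomial estimate showing that the affine reduction preserves the growth bound, together with the observation that determinacy is transported along the shift and reflection used, since these are bijections setting up an equivalence between the two moment problems. Modulo this bookkeeping, both parts collapse to the elementary fact that $\sum_l 1/l$ diverges, and I expect no real obstacle beyond correctly pairing each support hypothesis with the appropriate version of Carleman's criterion.
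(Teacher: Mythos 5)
Your proof is correct and takes essentially the same route as the paper's: both reduce the half-line case to the Stieltjes moment problem on $[0,\infty)$ via a translation (respectively a reflection), control the translated moments by the binomial estimate $|\tilde m_l|\leq (C+|x_0|)^l\, l^{2l}$, and then verify the Stieltjes and Hamburger forms of Carleman's condition for the two growth hypotheses. Your write-up is in fact slightly more explicit than the paper's, which leaves the divergence of the Carleman series and the case $(-\infty,x_0]$ as routine checks.
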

\begin{proof}
	In the case when $\mu$ is supported on the interval $[0,\infty)$ this is exactly
	Stieltjes moment problem, while in the case when $\mu$ is supported on
	the real line $\R$ this is exactly the Hamburger moment problem.
	It is easy to check that the assumptions \eqref{eq:moments-slow}, and \eqref{eq:moments-slow'}
	imply that the Carleman's conditions in both Stieltjes and Hamburger,
	respectively, problems are satisified and it follows that the measure $\mu$ is uniquely
	determined by its moments.
	
	Now, assume that $\mu$ is a probability measure which is supported on
	the interval $[x_0,\infty)$. We define a probability measure $\mu_{x_0}$ supported on
	the interval $[0,\infty)$, as a translation of $\mu$ that is,
	for any measurable set $A \subset \R$ we have
	\[ \mu_{x_0}(A) := \mu(A+x_0).\]
	Let us compute the moments of $\mu_{x_0}$:
	\[ m_l(\mu_{x_0}) = \int_\R x^l \dif\mu_{x_0} = \int_\R (x-x_0)^l \dif\mu =
	\sum_{k}\binom{l}{k}(-x_0)^{l-k}\int_\R x^k \dif\mu.\]
	This leads to the following inequalities:
	\begin{multline*} \left|m_l(\mu_{x_0}) \right| \leq \sum_{k}\binom{l}{k}|x_0|^{l-k}\ |m_k|
	\leq \\
	\sum_{k}\binom{l}{k}|x_0|^{l-k}\ C^k\ k^{2k} \leq
	(C+|x_0|)^l\ l^{2l}.
	\end{multline*}
	
	By Carleman's criterion it means that the measure $\mu_{x_0}$ is uniquely
	determined by its moments, which is equivalent by the construction
	that the measure $\mu$ is uniquely
	determined by its moments, too. 
	
	The case, when $\mu$ is supported on the interval $(-\infty,x_0]$ is
	analogous, and we leave it as a simple exercise.
\end{proof}

\subsection{Slow growth of $(\Rfunct_n)$ implies slow growth of $(\Sfunct_n)$}
\begin{lemma}
	\label{lem:small-free}
	Let $(\Sfunct_l)_{l\geq 2}$ be a sequence of real numbers
	and let $(\Rfunct_l)_{l\geq 2}$ given by \eqref{eq:r-s}
	be the corresponding sequence of free cumulants.
	Assume that the sequence of free cumulants fulfills the estimate
	\begin{equation}
	\label{eq:free-are-small}
	| \Rfunct_l | \leq C^l\ l^{ml}    
	\end{equation}
	for some constants $m,C\geq 0$ and all $l\geq 2$.
	
	Then the sequence $(\Sfunct_l)$ fulfills an analogous 
	estimate 
	\begin{equation}
	\label{eq:free-are-smallS}
	| \Sfunct_l | \leq C^l\ l^{ml};    
	\end{equation}
	possibly for another value of the constant $C$.
\end{lemma}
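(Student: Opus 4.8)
The plan is to read the bound directly off the explicit formula \eqref{eq:s-r}, which expresses $\Sfunct_l$ as a finite sum of products of free cumulants weighted by combinatorial factors. The whole strategy rests on a single observation: once one extracts the super-exponential factor $l^{ml}$ from the product of free cumulants, every remaining prefactor (the number of admissible compositions, the reciprocal factorial $1/i!$, and the falling factorial $(l-1)^{\underline{i-1}}$) grows at most exponentially in $l$ and can therefore be absorbed into a redefinition of the constant $C$.

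First I would estimate a single product of free cumulants. Fix a composition $k_1 + \cdots + k_i = l$ with each $k_j \ge 2$. Since $k_j \le l$ for every $j$, the hypothesis \eqref{eq:free-are-small} gives
\[
|\Rfunct_{k_1} \cdots \Rfunct_{k_i}| \le \prod_{j=1}^i C^{k_j}\, k_j^{m k_j} \le C^l \prod_{j=1}^i l^{m k_j} = C^l\, l^{m l},
\]
where the last equality uses $k_1 + \cdots + k_i = l$. The key point is already visible here: bounding each $k_j$ by $l$ converts the product $\prod_j k_j^{m k_j}$ into the single clean factor $l^{ml}$, precisely the growth rate we are aiming for, and the bound is uniform over all compositions into $i$ parts.

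Next I would control the combinatorial prefactors. The number of compositions of $l$ into $i$ parts, each at least $2$, is at most $\binom{l}{i}$, and $(l-1)^{\underline{i-1}} \le l^{i}$. Inserting the uniform bound $C^l l^{ml}$ for each product, taking absolute values in \eqref{eq:s-r}, and summing over compositions yields
\[
|\Sfunct_l| \le C^l\, l^{ml} \sum_{i \ge 1} \frac{1}{i!}\, (l-1)^{\underline{i-1}}\, \binom{l}{i} \le C^l\, l^{ml} \sum_{i \ge 1} \frac{l^i}{i!} \cdot \frac{l^i}{i!}.
\]
Here I used $\binom{l}{i} \le l^i / i!$. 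The trailing sum equals $\sum_{i \ge 1} \big(\tfrac{l^i}{i!}\big)^2$, which is bounded by $\big(\sum_{i \ge 0} \tfrac{l^i}{i!}\big)^2 = e^{2l}$ by the elementary inequality $\sum_i a_i^2 \le (\sum_i a_i)^2$ valid for non-negative terms. Hence $|\Sfunct_l| \le (C e^2)^l\, l^{ml}$, and replacing the constant $C$ by $C e^2$ yields exactly \eqref{eq:free-are-smallS}.

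The argument is essentially bookkeeping, so there is no serious obstacle; the only thing requiring care is confirming that none of the prefactors can grow faster than $(\mathrm{const})^l$. Once the factor $l^{ml}$ has been extracted cleanly from the product of free cumulants via the estimate $k_j \le l$, everything else is exponential in $l$ and is harmlessly absorbed into the constant, so the growth rate $l^{ml}$ is preserved exactly.
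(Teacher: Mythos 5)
Your proof is correct and follows essentially the same route as the paper's: you take absolute values in \eqref{eq:s-r}, extract the uniform factor $C^l\, l^{ml}$ from each product of free cumulants via $k_j \le l$, and then absorb the remaining combinatorial prefactors into the constant, bounding the trailing sum by $e^{2l}$ where the paper bounds the (exactly counted) compositions by $2^{2l}$. The only difference is this cosmetic choice of elementary estimate, which merely changes the final constant ($Ce^2$ versus $4C$).
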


\begin{proof}
	The expansion \eqref{eq:s-r} for $\Sfunct_l$ in terms of the free cumulants gives immediately:
	\begin{multline} 
	\label{eq:ograniczenie}
	|\Sfunct_l| \leq \sum_{i\geq 1} \frac{1}{i!}  (l-1)_{i-1}\  
	C^l\sum_{\substack{k_1,\dots,k_i\geq 2 \\ k_1+\cdots+k_i=l }} k_1^{m k_1} \cdots
	k_i^{m k_i} \\
	\leq C^l l^{ml}\sum_{i\geq 1} \frac{1}{i!}  (l-1)_{i-1}
	\sum_{\substack{k_1,\dots,k_i\geq 2 \\ k_1+\cdots+k_i=l }}1.
	\end{multline}
	Since
	\[ \sum_{\substack{k_1,\dots,k_i\geq 2 \\ k_1+\cdots+k_i=l }}1 =
	\sum_{\substack{k_1,\dots,k_i\geq 1 \\ k_1+\cdots+k_i=l-i
	}}1=\binom{l-1-i}{i-1},\]
	we can bound the sum on the right-hand side of \eqref{eq:ograniczenie} as follows:
	\begin{multline*} 
	\sum_{i\geq 1} \frac{1}{i!}  (l-1)_{i-1}
	\sum_{\substack{k_1,\dots,k_i\geq 2 \\ k_1+\cdots+k_i=l }} 1 \leq
	\sum_{i\geq 1}\binom{l-1}{i-1}\binom{l-1-i}{i-1} \leq \\
	\sum_{i\geq
		0}\binom{l-1}{i}^2 
	\leq \left(\sum_{i\geq 0}\binom{l-1}{i}\right)^2\leq 2^{2l},
	\end{multline*}
	which plugged into \eqref{eq:ograniczenie} yields
	\[ |\Sfunct_l| \leq (4C)^l\ l^{ml},\]
	which finishes the proof.
\end{proof}

\subsection{Estimates on some classes of permutations}

Recall that for a permutation $\pi$ we denote by $C(\pi)$ the set of its cycles.
The length 
\[\|\pi\|:=l - |C(\pi)|\] 
of a permutation $\pi\in\Sym{l}$ is defined as the minimal number of factors
necessary to write $\pi$ as a product of transpositions.
\begin{lemma}
	\label{lem:not-many-permutations}
	For all integers $r\geq 0$ and $l\geq 1$
	\[ \#\Big\{ \pi\in\Sym{l} : \| \pi\| = r \Big\} \leq \frac{l^{2r}}{r!}. \]
\end{lemma}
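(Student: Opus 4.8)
The plan is to identify the quantity on the left-hand side with an unsigned Stirling number of the first kind and then to bound it by an elementary symmetric polynomial.

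First I would observe that, since $\|\pi\|=l-|C(\pi)|$, the condition $\|\pi\|=r$ is equivalent to $\pi$ having exactly $l-r$ cycles; hence the number to be estimated is precisely $\stirling{l}{l-r}$, the number of permutations in $\Sym{l}$ with $l-r$ cycles. Next I would invoke the classical generating identity $\sum_{j}\stirling{l}{j}\,x^{j}=x(x+1)\cdots(x+l-1)$ and extract the coefficient of $x^{l-r}$. Writing the right-hand side as $x\prod_{i=1}^{l-1}(x+i)$, one sees that this coefficient equals $e_r(1,2,\dots,l-1)$, the $r$-th elementary symmetric polynomial evaluated at the integers $1,\dots,l-1$; thus $\stirling{l}{l-r}=e_r(1,2,\dots,l-1)$.

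The key step is then the Maclaurin-type inequality $e_r(x_1,\dots,x_n)\le \frac{1}{r!}(x_1+\cdots+x_n)^{r}$, valid for nonnegative $x_i$. I would prove it by expanding $(x_1+\cdots+x_n)^{r}$ multinomially: the squarefree monomials are produced, each in exactly $r!$ orderings, already accounting for $r!\,e_r$, while the remaining terms (those with a repeated index) are nonnegative. Applying this with $x_i=i$ and using $\sum_{i=1}^{l-1} i=\frac{l(l-1)}{2}\le \frac{l^{2}}{2}$ yields
\[
\#\big\{\pi\in\Sym{l}:\|\pi\|=r\big\}=e_r(1,\dots,l-1)\le \frac{1}{r!}\Big(\frac{l(l-1)}{2}\Big)^{r}\le \frac{l^{2r}}{2^{r}\,r!}\le \frac{l^{2r}}{r!},
\]
which is the desired bound (in fact with room to spare).

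The argument is essentially obstacle-free; the only delicate points are the justification of the symmetric-function inequality and the appeal to the generating function for Stirling numbers. If one prefers a fully self-contained route avoiding the latter, I would instead argue by induction on $l$ using the recursion $N(l,r)=N(l-1,r)+(l-1)\,N(l-1,r-1)$ (according to whether $l$ is a fixed point of $\pi$ or is inserted into one of the $l-1$ available positions in a permutation of $\{1,\dots,l-1\}$). In that approach the inductive step reduces to verifying $(l-1)^{2r-1}(l-1+r)\le l^{2r}$ for $1\le r\le l-1$, which follows from Bernoulli's inequality applied to $\big(1+\tfrac{1}{l-1}\big)^{2r-1}$; this elementary estimate is the only genuinely computational part, and I expect it to be the main (though minor) obstacle.
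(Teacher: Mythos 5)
Your proof is correct, but it takes a genuinely different route from the paper. The paper argues group-theoretically, by counting minimal factorizations into transpositions: for any $\pi$ with $\|\pi\|=r$, each cycle $c$ of $\pi$ supplies $\binom{|c|}{2}\geq |c|-1$ length-decreasing transpositions, so $\pi$ admits at least $r!$ distinct ordered factorizations into $r$ transpositions; since there are fewer than $l^2$ transpositions in $\Sym{l}$, there are at most $l^{2r}$ such ordered products, and dividing by $r!$ gives the bound. You instead identify the count as the unsigned Stirling number $\stirling{l}{l-r}=e_r(1,\dots,l-1)$ via the generating identity $\sum_j \stirling{l}{j}x^j=x(x+1)\cdots(x+l-1)$, and then apply the multinomial bound $e_r(x_1,\dots,x_n)\leq \frac{1}{r!}(x_1+\cdots+x_n)^r$; all steps check out (including the edge cases $r=0$ and $r>l-1$, where $e_r$ vanishes), and your inductive fallback via the recursion $N(l,r)=N(l-1,r)+(l-1)N(l-1,r-1)$ together with Bernoulli's inequality is also sound. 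Your route actually yields the strictly sharper estimate $\frac{1}{r!}\bigl(\tfrac{l(l-1)}{2}\bigr)^r\leq \frac{l^{2r}}{2^r\,r!}$, gaining a factor $2^{-r}$ that the paper does not need; what the paper's argument buys in exchange is brevity and self-containedness (no generating-function or symmetric-function input), plus a geodesic-counting flavor that adapts to other generating sets, while yours makes the exact combinatorial identity behind the count visible and quantifies the slack in the lemma.
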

\begin{proof}
	We claim that for each permutation $\pi\in\Sym{l}$ such that $\|\pi\|=r$ there exist
	at least $r$ transpositions $\tau$ with the property that the permutation 
	$\pi':= \pi \tau$ fulfills $\| \pi'\|=\|\pi\|-1$. Indeed, each such a transposition
	is of the form $\tau=(a,b)$ with $a\neq b$ being elements of the same cycle of
	$\pi$; it follows that the number of such transpositions is equal to
	\[ \sum_{c\in C(\pi)} \binom{|c|}{2} \geq \sum_{c\in C(\pi)} \big( |c|-1  \big) = \|\pi\|. \]
	
	By repeating inductively the same argument for the collection of permutations $(\pi')$ obtained above,
	it follows that the permutation $\pi$ can be written in at least $r!$ 
	different ways as a product of $r$ transpositions. Since there are $\binom{l}{2}<l^2$
	transpositions in $\Sym{l}$, this concludes the proof.
\end{proof}

We revisit \cref{sec:top-degree-by-Sniady}.
In the following we will need a convenient way of parametrizing 
the equivalence classes in $\mathcal{X}_l/\sim$;
for this purpose we note that in each equivalence class one can choose a representative (which is not necessarily unique)
$(\sigma_1,\sigma_2)$ with the property that the permutation $\sigma_2$ has a particularly simple
cycle structure, namely
\[ \sigma_2=(1,2,\dots,i_1)(i_1+1,i_1+2,\dots,i_2) \cdots (i_{\ell-1}+1,i_{\ell-1}+2,\dots,i_\ell)\]
for some increasing sequence $1\leq i_1<i_2<\cdots<i_\ell=l$.
Note that for a fixed $\ell=|C(\sigma_2)|$
\begin{multline} 
\label{eq:takietam}
\text{the number of permutations } \sigma_2 \text{ of the above form}\\
\text{is given by } \binom{l}{\ell-1} \leq l^{\ell-1}.
\end{multline}

\subsection{Growth of free cumulants}
\begin{proposition}
	\label{prop:CumulantsGrowSlow}
	We use the notations and assumptions of \cref{theo:lln}.
	For the random variable $\Rfunct_l=\Rfunct_l(\lambda_n)$ we define
	\begin{equation}
	\label{eq:free-cumulants-exist}
	r_l := \lim_{n\to\infty} {n^{- \frac{l}{2}}}\  \E_{\chi_n} \Rfunct_l.   
	\end{equation}
	
	Then there exists some constant $C$ such that
	\begin{equation}
	\label{eq:free-cumulants-grow-slow}
	|r_l| \leq C^{l}\ l^{ml}    
	\end{equation}
	holds true for each integer $l\geq 2$, where
	$m$ is given by \eqref{eq:what-is-m}.
\end{proposition}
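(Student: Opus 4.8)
The plan is to transfer the slow growth of the first-order character coefficients, which is exactly the content of the hypothesis \eqref{eq:what-is-m}, to the free cumulants, using the explicit asymptotic relations established in \cref{sec:extra-terms-evil}. Existence of the limit \eqref{eq:free-cumulants-exist} needs no separate argument: since $\Rfunct_l\in\Poly$ has degree $l$, the quantity $r_l$ is the $\ell=1$, $x_1=\Rfunct_l$ instance of condition \ref{item:cumulants-classic-polynomial} of \cref{def:approx-factorization-charactersB}, which holds under the assumptions of \cref{theo:lln}, and $r_l=a'_l$ in the notation of \cref{sec:extra-terms-evil}. It remains to prove \eqref{eq:free-cumulants-grow-slow}; I record \eqref{eq:what-is-m} as $|a_l|\le C_0^{\,l}\,l^{ml}$, where the $a_l$ are the character coefficients from \eqref{eq:refined-asymptotics-characters}. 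The case $g=0$ (so $m=1$) is immediate: by \eqref{eq:double-scaling-refined} we have $\gamma=O(1)$, the $\gamma$-terms of \eqref{eq:kerov-polynomial-subtle} are negligible at top order, so $r_l=a'_l=a_l$ by \eqref{eq:what-is-prime} and the bound follows from the hypothesis.

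For the double scaling limit $g\ne 0$ (so $m=2$) we have $\gamma\sim g\sqrt n$ and every power of $\gamma$ contributes at top order. Here I would start from the relation between the $a_l$ and the $a'_l=r_l$ recorded in \cref{subsubsect:DoubleScaling}, obtained from \cref{lem:top-degree-jack-character} by taking expectations and using approximate factorization to replace $\E_{\chi_n}\prod_c\Rfunct_{q(c)}$ by $\prod_c\E_{\chi_n}\Rfunct_{q(c)}$ at leading order:
\[
a_{l+1}= \sum_{[(\sigma_1,\sigma_2)]\in \mathcal{X}_l/\sim} g^{\,l+1-|C(\sigma_1)|-|C(\sigma_2)|}\sum_{\substack{q\colon C(\sigma_2)\to\{2,3,\dots\}\\ (\sigma_1,\sigma_2,q)\text{ is an expander}}}\prod_{c\in C(\sigma_2)} a'_{q(c)}.
\]
The factor $a'_{l+1}$ appears in exactly one configuration — $\sigma_1=\operatorname{id}$, $\sigma_2$ a single $l$-cycle, $q\equiv l+1$ — since any term containing it forces $\sum_c q(c)=l+1$ and a single cycle in $\sigma_2$. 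Isolating this term expresses $r_{l+1}=a'_{l+1}$ as $a_{l+1}$ minus a sum over the remaining configurations, all of whose factors $a'_{q(c)}$ have index $q(c)\le l$; this sets up an induction on $l$.

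Assuming $|r_k|\le C^k k^{2k}$ for $k\le l$, I would bound the remaining sum, denoted $R_l$. Dropping the expander inequality (which only enlarges the sum) I may sum over all $q$, and estimate $\prod_c|a'_{q(c)}|\le C^{s}\prod_c q(c)^{2q(c)}$ with $s=|C(\sigma_1)|+|C(\sigma_2)|\le l+1$, the bound on $s$ coming from non-negativity of the exponent of $g$. The number of $\sigma_1$ of prescribed length $\|\sigma_1\|=l-|C(\sigma_1)|$ is controlled by \cref{lem:not-many-permutations}, with its crucial factorial denominator, while \eqref{eq:takietam} bounds the number of canonical $\sigma_2$ with a prescribed number of cycles, and the admissible $q$ number at most the compositions of $s$.

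The main obstacle is the resulting multi-index summation, which must be organized so as not to waste the available savings. Bounding $\prod_c q(c)^{2q(c)}$ by the crude $s^{2s}\le(l+1)^{2(l+1)}$ overshoots the target rate by a factor of order $l^{l}$; instead one keeps this product exactly and retains the factorial denominators of \cref{lem:not-many-permutations}. The ratio of $l^{2\|\sigma_1\|}\prod_c q(c)^{2q(c)}$ to $(l+1)^{2(l+1)}$ decays geometrically in $\|\sigma_1\|$ (for a single cycle it is essentially $e^{-2\|\sigma_1\|}$), and writing $C^{s}=C^{l+1}C^{-p}$ exhibits an extra suppression $C^{-p}$ governed by the number $p\ge 0$ of $g$-factors. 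Together with the factorials, the sums over $\sigma_1$, over the number of cycles of $\sigma_2$, and over $q$ then collapse to convergent (essentially geometric) series whose total is $\le(D/C)\,C^{l+1}(l+1)^{2(l+1)}$ for some $D$ independent of $l$. Choosing $C\ge C_0$ large enough makes this non-trivial contribution a small fraction of the target and closes the induction, yielding \eqref{eq:free-cumulants-grow-slow} with $m=2$.
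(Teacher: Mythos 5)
Your proposal reproduces, in all essentials, the paper's own proof: existence of $r_l$ via condition \ref{item:cumulants-classic-polynomial} of \cref{def:approx-factorization-charactersB} together with the factorization \eqref{eq:free-cumulants-behave-nice}; the $g=0$ case read off from \eqref{eq:kerov-polynomial-subtle}; and for $g\neq 0$ exactly the relation \eqref{eq:totoro}, the isolation of the unique configuration $\bigl(\sigma_1=\operatorname{id},\ \sigma_2 \text{ an } l\text{-cycle},\ q\equiv l+1\bigr)$ carrying $r_{l+1}$, and an induction driven by \cref{lem:not-many-permutations}, the count \eqref{eq:takietam}, and the observation that one must keep $\prod_c q(c)^{2q(c)}$ rather than crush it to $(l+1)^{2(l+1)}$ --- which is precisely what the paper's convexity bound \eqref{eq:convex} accomplishes, yielding the same geometric collapse and the same ``choose $C$ large'' closing step.

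There is, however, one step your write-up needs and does not supply, and your final estimate silently depends on it. Your bound $(D/C)\,C^{l+1}(l+1)^{2(l+1)}$ on the remainder draws its factor $1/C$ entirely from the suppression $C^{-p}$, so it requires $p\geq 1$, i.e.\ $v:=l+1-|C(\sigma_1)|-|C(\sigma_2)|\geq 1$, for \emph{every} remaining configuration; you explicitly allow $p\geq 0$. Pairs with $v=0$ and $\sigma_1\neq\operatorname{id}$ do exist in $\mathcal{X}_l$ (those whose incidence bipartite graph between $C(\sigma_1)$ and $C(\sigma_2)$ is a tree), and once you ``drop the expander inequality'' to enlarge the sum over $q$, such configurations genuinely contribute terms with $C^{-p}=1$; the geometric decay in $\|\sigma_1\|$ alone does not produce a factor $1/C$, so the induction would not close for any fixed $C$. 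The paper repairs this before enlarging the sum: the total $v=0$ contribution to \eqref{eq:Sniady-proved-this} is the $\gamma=0$ specialization, which by \eqref{eq:Kerov-polynomial-gamma0} equals exactly $\Rfunct_{l+1}$; since all terms enter with coefficient $+1$, no $v=0$ class other than the isolated one admits an expander, so the remaining sum in \eqref{eq:induction-step} runs over $u\geq 1$ \emph{and} $v\geq 1$. Relatedly, you assert the inequality $v\geq 0$ (``non-negativity of the exponent of $g$'') without argument --- the paper proves it by noting that transitivity makes the bipartite graph connected while it has at most $l$ edges, hence at most $l+1$ vertices; this fact also underlies your uniqueness claim for the top configuration. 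With these two points supplied (and the trivial base $r_2=1$), your argument coincides with the paper's proof.
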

\begin{proof}
	For a given value of $n$ we will investigate the collection of
	random variables 
	\begin{equation}
	\label{eq:my-collection}
	\left( {n^{- \frac{l}{2}}} \Rfunct_l \right)_{l\geq 2}.    
	\end{equation}
	Approximate factorization property, by 
	\cref{def:approx-factorization-charactersB}\ref{item:afp2}\ref{item:cumulants-classic-polynomial}, 
	implies (for $\ell=1$) that
	the limit \eqref{eq:free-cumulants-exist} exists and is finite for
	each integer $l \geq 2$. Furthermore, it implies that each cumulant
	$\kappa_{\ell}$ for $\ell\geq 2$ of the random variables \eqref{eq:my-collection}
	converges to zero as $n\to\infty$.
	
	Each moment (i.e.~the mean value of a product of some random variables)
	can be expressed as a polynomial in the cumulants of the individual random variables
	via \emph{``moment-cumulant formula''}. Thus
	by vanishing of the higher cumulants which correspond to $\ell\geq 2$,
	the expected value of a product of free cumulants approximately factorizes:
	\begin{equation}
	\label{eq:free-cumulants-behave-nice}
	\lim_{n\to\infty} \frac{1}{n^{\frac{k_1+\cdots+k_\ell}{2}}} 
	\ \E_{\chi_n}\left[ \Rfunct_{k_1} \cdots \Rfunct_{k_\ell} \right]  =
	r_{k_1} \cdots r_{k_\ell}.
	\end{equation}
	
	We consider first the case $g=0$, $m=1$.
	Let us divide both sides of \eqref{eq:kerov-polynomial-subtle} by $n^{\frac{l+1}{2}}$
	and take the mean value $\E_{\chi_n}$. 
	By taking the limit $n\to\infty$ and using \eqref{eq:free-cumulants-behave-nice}
	we obtain in this way
	\[ r_{l+1}=a_{l+1} \]
	and the claim follows immediately.
	
	\medskip
	
	\newcommand{\parar}{u}
	\newcommand{\parat}{v}
	
	From the following on we consider the generic case $g\neq 0$ and $m=2$.
	Analogously as above, 
	let us divide both sides of \eqref{eq:Sniady-proved-this} by $n^{\frac{l+1}{2}}$
	and take the mean value $\E_{\chi_n}$. 
	By taking the limit $n\to\infty$ and using \eqref{eq:free-cumulants-behave-nice}
	we obtain in this way
	\begin{equation}
	\label{eq:totoro}
	a_{l+1} = \sum_{[(\sigma_1,\sigma_2)]\in\mathcal{X}_l/\sim} 
	\sconstant^{l+1-|C(\sigma_1)|-|C(\sigma_2)|} 
	\sum_{\substack{q\colon C(\sigma_2)\to\{2,3,\dots\} \\
			\text{$(\sigma_1,\sigma_2,q)$ is an expander}}}
	\prod_{c\in C(\sigma_2)} r_{q(c)}.
	\end{equation}
	We shall cluster the summands according to the parameters $\parar$ and $\parat$ given by
	\[ \parar := \|\sigma_1\| \qquad \text{and} \qquad 
	\parat:=l+1-|C(\sigma_1)|-|C(\sigma_2)|. \]
	
	In the following we will show that 
	from the transitivity requirement in the definition of $\mathcal{X}_l$ it follows that 
	\[ \parat\geq 0. \]
	Indeed, let us construct a bipartite graph $G = (V_\circ \sqcup
	V_\bullet,E)$ with the vertices corresponding to the cycles of
	$\sigma_1$ and the cycles of $\sigma_2$: $V_\circ =
	C(\sigma_1), V_\bullet = C(\sigma_2)$; we connect two vertices 
	by an edge if the corresponding cycles are not disjoint, 
	that is $e = (c_1,c_2)\in E$ if $c_1 \cap c_2 \neq \emptyset$. Then, the
	transitivity of the action of the group generated by $\sigma_1,\sigma_2\in\Sym{l}$ 
	means precisely that
	the graph $G$ is connected. Since the number of edges of $G$ is bounded from above
	by $l$ it follows that the number of vertices of $G$, which is equal to
	$|C(\sigma_1)|+|C(\sigma_2)|$, cannot exceed $l+1$, which gives the required inequality.
	
	Furthermore, the contribution of the terms for which the equality $\parat=0$ holds true
	corresponds to the specialization $g=0$;
	by revisiting \eqref{eq:kerov-polynomial-subtle} it follows that
	this contribution is equal to $r_{l+1}$ which corresponds to
	the unique equivalence class 
	\[ \big\{ (\operatorname{id},\sigma_2) : 
	\sigma_2\in\Sym{l} \text{ is such that $|C(\sigma_2)|=1$} \big\}
	\]
	for which $\parar=0$ and $\parat=0$. It is easy to check that it is the unique
	summand for which $\parar=0$ (since the latter condition is equivalent to
	$\sigma_1=\operatorname{id}$). 
	By singling out this particular summand,
	\eqref{eq:totoro} can be transformed to
	\begin{equation}
	\label{eq:induction-step}
	r_{l+1} = a_{l+1} -
	\sum_{\substack{\parar\geq 1 \\ \parat\geq 1}}
	\sum_{\substack{[(\sigma_1,\sigma_2)]\in\mathcal{X}_l/\sim \\ \|\sigma_1\|=\parar \\ 
			l+1-|C(\sigma_1)|-|C(\sigma_2)|=\parat}} 
	\sconstant^{\parat} 
	\sum_{\substack{q\colon C(\sigma_2)\to\{2,3,\dots\} \\
			\text{$(\sigma_1,\sigma_2,q)$ is an expander}}}
	\prod_{c\in C(\sigma_2)} r_{q(c)}.
	\end{equation}
	
	With the notations used in \cref{eq:induction-step}
	\begin{equation}
	\label{eq:sum-of-qs}
	\parat+ \sum_{c\in C(\sigma_2)} q(c) = \parat+ |C(\sigma_1)|+|C(\sigma_2)|=l+1    
	\end{equation}
	which implies that for each $C>0$ we have that
	\begin{multline}
	\label{eq:induction-step-2}
	\left| \frac{ r_{l+1}}{C^{l+1}\ (l+1)^{2(l+1)}} \right|  \leq 
	\left| \frac{a_{l+1}}{C^{l+1}\ (l+1)^{2(l+1)}} \right| + \\
	\sum_{\substack{\parar\geq 1 \\ \parat\geq 1}} 
	\sum_{\substack{[(\sigma_1,\sigma_2)]\in\mathcal{X}_l/\sim \\ \|\sigma_1\|=\parar \\ l+1-|C(\sigma_1)|-|C(\sigma_2)|=\parat}} 
	\left(\frac{|\sconstant|}{C\ (l+1)^2}\right)^{\parat} \times \\ \times
	\sum_{\substack{q\colon C(\sigma_2) \to\{2,3,\dots\} \\
			\text{$(\sigma_1,\sigma_2,q)$ is an expander}}}
	\prod_{c\in C(\sigma_2)} \left| \frac{r_{q(c)}}{C^{q(c)}\ (l+1)^{2 q(c)}} \right|.
	\end{multline}
	
	By \cref{lem:not-many-permutations} and \eqref{eq:takietam}, 
	for each pair of integers $\parar,\parat\geq 1$ the number of equivalence classes $[(\sigma_1,\sigma_2)]$ which
	could possibly contribute to the above sum is bounded from above by
	\[ \frac{l^{2\parar}}{\parar!}  (l+1)^{|C(\sigma_2)|-1} = \frac{l^{2\parar}}{\parar!}  (l+1)^{\parar-\parat} \leq \frac{(l+1)^{3\parar-\parat}}{\parar!}.\]
	
	For each representative $(\sigma_1,\sigma_2)$ of an equivalence class
	the number of functions $q\colon C(\sigma_2)\to\{2,3,\dots\}$ which fulfill
	\eqref{eq:sum-of-qs} is bounded from above by 
	\[ (l+1)^{|C(\sigma_2)|-1}= (l+1)^{\parar-\parat}.\] 
	
	\medskip

	Our strategy is to prove \eqref{eq:free-cumulants-grow-slow} for $m=2$ by induction over $l\geq 2$.
	It is trivial to check that $r_2 = 1$ always holds true and thus the induction base $l=2$
	is valid if $C\geq 1$.
	We shall assume that that \eqref{eq:free-cumulants-grow-slow} holds true for $2\leq k\leq l$.
	The value of the constant $C$ will be specified at the end of the proof in such a way that
	each induction step can be justified.
	The induction hypothesis implies that
	\begin{equation*}
	\prod_{c\in C(\sigma_2)} \left| r_{q(c)} \right| \leq C^{l+1-\parat}  \prod_{c\in C(\sigma_2)} q(c)^{2q(c)}.
	\end{equation*}
	
	We claim that the following inequality holds true:
	\begin{equation}
	\label{eq:convex}
	\prod_{c\in C(\sigma_2)} q(c)^{2q(c)} \leq 2^{4(\parar-\parat)}\  (l+1)^{2\big( l+1-\parat-2(\parar-\parat)\big)}.  
	\end{equation}
	Indeed, the logarithm of the left-hand side
	is a convex function
	\[ \R_+^{|C(\sigma_2)|} \ni \big( q(c) : c\in C(\sigma_2) \big) \mapsto 2 \sum_{c\in C(\sigma_2)} {q(c)}\log q(c);\]
	its supremum 
	over the simplex given by inequalities $q(c)\geq 2$ and the equality \eqref{eq:sum-of-qs}
	is attained in one of the simplex vertices which corresponds to
	\[ \big( q(c) \big)_{c\in C(\sigma_2)} = \big(l+1-\parat-2(\parar-\parat),\underbrace{2,\dots,2}_{\text{$|C(\sigma_2)|-1=\parar-\parat$ times}} \big);   \]
	this concludes the proof of \eqref{eq:convex}.
	
	In this way we proved that
	\[ \prod_{c\in C(\sigma_2)} \left| \frac{r_{q(c)}}{C^{q(c)} (l+1)^{2 q(c)}} \right| \leq 
	\left( \frac{2}{l+1} \right)^{4(\parar-\parat)}.
	\]
	
	It follows that the right-hand side of \eqref{eq:induction-step-2} is bounded from above by
	\begin{multline*} 
	\left| \frac{a_{l+1}}{C^{l+1}\ (l+1)^{2(l+1)}} \right| + \\
	\shoveright{\sum_{\substack{\parar\geq 1 \\ \parat\geq 1}} 
		\frac{(l+1)^{3\parar-\parat}}{\parar!} (l+1)^{\parar-\parat} \left(\frac{|\sconstant|}{C\ (l+1)^2}\right)^{\parat}
		\left( \frac{2}{l+1} \right)^{4(\parar-\parat)} \leq} \\
	\shoveleft{\left| \frac{a_{l+1}}{C^{l+1}\ (l+1)^{2(l+1)}} \right| + 
		\sum_{\parar\geq 1} 
		\frac{2^{4\parar}}{\parar!} \sum_{\parat\geq 1} \left(\frac{|\sconstant|}{2^4 C}\right)^{\parat} = }\\
	\left| \frac{a_{l+1}}{C^{l+1}\ (l+1)^{2(l+1)}} \right| + 
	(e^{16}-1) \frac{ \frac{|\sconstant|}{2^4 C} }{1 - \frac{|\sconstant|}{2^4 C}}
	\end{multline*}
	for $\frac{|\sconstant|}{2^4 C}<1$.
	The right-hand side tends to zero uniformly over $l$ as $C\to\infty$;
	there exists therefore some $C$ such that the right-hand side is smaller than~$1$.
	Such a choice of $C$ assures that each inductive step is justified.
	This concludes the proof.
\end{proof}

\section{Law of large numbers. Proof of \cref{theo:lln}}
\label{sec:proof-of-LLN}

For Reader's convenience the proof of \cref{theo:lln} was split
into several subsections which consitute the current section.

\subsection{Measure associated with a Young diagram}
\label{subsec:MeasureAssociatedWithYoung}

Suppose an anisotropic Young diagram $\Lambda\subseteq \R\times \R$ 
(viewed in the French coordinate system) is given.
We will assume that the area of $\Lambda$ is equal to $1$. Let $(x,y)\in\Lambda$
be a random point in $\Lambda$, sampled with the uniform probability.
We denote by $P_{\Lambda}$ the probability distribution of its Russian coordinate
\[ u = x-y.\]
It is a probability measure on $\R$ with the probability density
\begin{equation}
\label{eq:density}
f_{\Lambda}(u)=\frac{\omega_\Lambda(u)-|u|}{2}.   
\end{equation}
This density is a Lipschitz function with the Lipschitz constant equal to $1$.
Such probability measures $P_{\Lambda}$ will be our main tool for investigation
of asymptotics of Young diagrams $\Lambda$. The Reader should be advised that
this is \emph{not} Kerov's transition measure
which is also a probability measure on the real line associated with
a Young diagram \cite{Kerov1993transition} for similar purposes. 

Any anisotropic Young diagram $\Lambda$ with unit area which contains some point $(x_0,y_0)$, contains also
the whole rectangle $\{ (x,y) : 0\leq x \leq x_0, 0\leq y\leq y_0 \}$; by comparison of the areas 
it follows that $x_0 y_0\leq 1$.
The latter inequality written in the Russian coordinate system gives the following restriction
on the possible values of the corresponding profile $\omega$:
\[ |u| \leq \omega(u) \leq \sqrt{u^2+4} \] 
and for the corresponding density 
\begin{equation}
\label{eq:bounds-on-density}
0\leq f_\Lambda(u) \leq \frac{\sqrt{u^2+4}-|u|}{2} = \frac{2}{\sqrt{u^2+4}+|u|}.    
\end{equation}

A simple change of variables in the integrals shows that
for a Young diagram $\lambda_n$ with $n$ boxes and the corresponding anisotropic Young diagram
$\Lambda_n$ given by \eqref{eq:Lambda} the moments of the measure $P_{\Lambda_n}$
are given by
\[ \int u^{k} \dif P_{\Lambda_n}(u) = \frac{1}{k+1} \frac{1}{n^{\frac{k+2}{2}}}   \Sfunct_{k+2}(\lambda_n)
\qquad \text{for $k\geq 0$.}
\]

\subsection{Random variables $S^{[n]}_k$ and their convergence in probability.}
We start the proof of \cref{theo:lln}.
For $k\geq 2$ consider the random variable 
\[ S_k^{[n]}:=\frac{1}{n^{\frac{k}{2}}} \Sfunct_k(\lambda_n).\]
By aproximate factorization property,
the condition \ref{item:AFP-A} from \cref{def:approx-factorization-charactersB}
is fulfilled; it follows that the condition \ref{item:cumulants-classic-polynomial}
is fulfilled as well. In the special case $\ell=1$ and $x_1=\Sfunct_k$ it follows that the limit
\begin{equation}
\label{eq:limit-in-LLN}
s_k:=\lim_{n\to\infty} \E_{\chi_n} S^{[n]}_k  = 
\lim_{n\to\infty} \frac{1}{n^{\frac{k}{2}}} \E_{\chi_n}  \Sfunct_k   
\end{equation}
exists; in the special case $\ell=2$ and $x_1=x_2=\Sfunct_k$ it follows that the variance
\[ \operatorname{Var} S^{[n]}_k = 
\kappa_2^{\chi_n}\left( \frac{1}{n^{\frac{k}{2}}} \Sfunct_k,\frac{1}{n^{\frac{k}{2}}} \Sfunct_k  \right)=
O\left( \frac{1}{n} \right) \]
converges to zero. 
Chebyshev's inequality implies that for each $k\geq 2$ the sequence of random variables $(S_k^{[n]})_n$ converges 
(as $n\to\infty$) to $s_k$ in probability. 

\subsection{The limiting probability measure $P_{\Lambda_\infty}$}
\label{sec:limit-measure-Plambdainfty}
By \eqref{eq:bounds-on-density}, for each $n$ the mean value  
\begin{equation}
\label{eq:mean-value}
u \mapsto \E f_{\Lambda_n}(u)   
\end{equation}
exists, is finite, and fulfills analogous bounds to \eqref{eq:bounds-on-density}.
It is the density of the probability measure $\E P_{\Lambda_n}$;
the moments of this measure are given by
\[ \int u^k \dif \E P_{\Lambda_n}(u)= \frac{1}{k+1} \E_{\chi_n} S^{[n]}_{k+2}  .\]
The topology on the set of probability measures (with all moments finite) 
given by convergence of moments can be metrized; we denote by $d$ the corresponding distance.
Equation \eqref{eq:limit-in-LLN} implies that the sequence of measures
$\left( \E P_{\Lambda_n} \right)$ is a Cauchy sequence in the metric space given by $d$; 
this sequence 
converges therefore \emph{in moments} to some probability
measure which will be denoted by $P_{\Lambda_\infty}$,
in particular
\[ \int u^k \dif P_{\Lambda_\infty}(u)=  \frac{1}{k+1} s_{k+2}.\]

In general, it might happen that the measure $P_{\Lambda_\infty}$ is not unique;
it turns out, however, 
that in the setup which we consider 
\emph{the measure $P_{\Lambda_\infty}$ is uniquely
	determined by its moments}; we shall prove it in the following.

\subsection{The measure $P_{\Lambda_\infty}$ is determined by its moments}
By a minor modification of the proof of \eqref{eq:free-cumulants-behave-nice} we have
\begin{equation}
\label{eq:free-cumulants-behave-nice-B}
\lim_{n\to\infty} \frac{1}{n^{\frac{k_1+\cdots+k_\ell}{2}}} 
\ \E_{\chi_n}\left[ \Sfunct_{k_1} \cdots \Sfunct_{k_\ell} \right]  =
s_{k_1} \cdots s_{k_\ell}.
\end{equation}
It follows that the relation between the families of real numbers
$(s_k)_{k\geq 2}$ and $(r_k)_{k \geq 2}$ (given by \eqref{eq:free-cumulants-exist})
is given by an analogue of \eqref{eq:r-s}.

\cref{prop:CumulantsGrowSlow} states that there exists some constant
$C$ such that
\[ |r_k| \leq C^{k} k^{mk}  \]
for all positive integers $k \geq 2$, where  $m$ is given by \eqref{eq:what-is-m}.
Thus \cref{lem:small-free}
gives us the following estimates for the moments of
$P_{\Lambda_\infty}$:
\[ \left| \int u^k \dif P_{\Lambda_\infty}(u) \right| = 
\left| \frac{1}{k+1} s_{k+2} \right| \leq
C'^{k+2} (k+2)^{m(k+2)}\leq C''^kk^{mk}\]
for some constants $C',C''$.

\medskip

We consider first the case $\sconstant=0$.
\cref{lem:Carleman} implies immediately that the measure $P_{\Lambda_\infty}$ is uniquely determined by its moments
which concludes the proof.

\medskip

In the case $\sconstant>0$ the height of each box constituting the anisotropic Young diagram
$\Lambda_n$ is equal to $\sconstant+o(1)>c$ for some constant $c>0$, uniformly over $n$, 
cf.~\cref{sec:hypothesis-double-scaling}. By comparison of the areas it follows that the length $l$ of the 
bottom rectangle constituting $\Lambda_n$ fulfills
$ l c \leq 1 $; in particular it follows that the support of the measure 
$P_{\Lambda_n}$ 
is contained in the interval $\left( -\infty, \frac{1}{c} \right]$.
It follows that an analogous inclusion holds true for the support of the mean value
$\E P_{\Lambda_n}$; by passing to the limit the same is true for $P_{\Lambda_\infty}$. 
It follows that \cref{lem:Carleman} can be applied which concludes the proof.

\medskip

The case $\sconstant<0$ is fully analogous.

\subsection{Weak convergence of probability measures implies uniform convergence of densities}
For $\epsilon>0$ and $u_0\in\R$ let $\phi_\epsilon\colon \R\to\R_+$ be a function 
on the real line such that $\phi_\epsilon$ is supported on an $\epsilon$-neighborhood of $u_0$
and $\int \phi_\epsilon(u) \dif u = 1$. 
Since $\E f_{\Lambda_n}$ is Lipschitz with constant $1$, it follows that
\begin{multline} 
\label{eq:Lipschitz-is-good}
\left| \E f_{\Lambda_n}(u_0) - \int\ \phi_\epsilon(u)\ \dif \E P_{\Lambda_n}(u) \right| =\\
\left| \int \phi_\epsilon(u)\ \left( \E f_{\Lambda_n} (u_0) - \E f_{\Lambda_n}(u)\right)  \dif u \right| \leq 
\\ 
\int \phi_\epsilon(u)\ \left|\E f_{\Lambda_n} (u_0) - \E f_{\Lambda_n}(u)\right|\  \dif u\leq \epsilon.
\end{multline}
By weak convergence of probability measures, the integral on the left-hand side converges, 
as $n\to\infty$, to $\int\ \phi_\epsilon(u)\ \dif P_{\Lambda_\infty}(u)$. By passing to the limit, the above inequality implies
therefore that
\[
\limsup_{n\to\infty}
\left| \E f_{\Lambda_n}(u_0) - \int\ \phi_\epsilon(u)\ \dif P_{\Lambda_\infty}(u) \right| 
\leq \epsilon.
\]
Since this inequality holds true for arbitrary $\epsilon>0$, it follows that the sequence 
$\E f_{\Lambda_n}(u_0)$ is a Cauchy sequence, hence it converges to a finite limit
which will be denoted by $f_{\Lambda_\infty}(u_0)$.
In other words, we have proved that the functions $\E f_{\Lambda_n}$ converge pointwise
to the function $f_{\Lambda_\infty}$; since all functions $\E f_{\Lambda_n}$ are Lipschitz with the same constant,
\emph{the convergence is uniform on a compact set $K=[-R,R]$ for arbitrary value of $R$}.

On the other hand, inequalities \eqref{eq:bounds-on-density} show that
the distance between $\E f_{\Lambda_n}$ and $f_{\Lambda_\infty}$
with respect to the supremum norm on the set $K^c=\R\setminus K$
is bounded by
\begin{equation}
\label{eq:sup-bound}
\left\| \E f_{\Lambda_n} - f_{\Lambda_\infty} \right\|_{L^{\infty}[K^c]} \leq
\frac{2}{\sqrt{R^2+4}+|R|}. 
\end{equation}
Since the right-hand side converges to zero as $R\to\infty$, it follows that
\emph{the sequence of functions $\E f_{\Lambda_n}$ converges to $f_{\Lambda_\infty}$ 
	uniformly on the whole real line $\R$}.

To conclude, we proved the following theorem which might be of independent interest.

\begin{theorem}
Let $(\Lambda_n)$ be a sequence of random anisotropic Young diagrams, each with the unit area.
Let $(P_{\Lambda_n})$ be the corresponding sequence of random probability measures on $\R$ with
densities $(f_{\Lambda_n})$ as
in \cref{subsec:MeasureAssociatedWithYoung}. Assume that
the sequence of probability measures $(\E P_{\Lambda_n})$ converges to some limit in the weak topology. 

Then there exists a
function $f_{\Lambda_\infty}$ such that $\E f_{\Lambda_n} \to
f_{\Lambda_\infty}$ uniformly on $\R$.
\end{theorem}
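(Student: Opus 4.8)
The plan is to exploit two structural features of the densities $f_{\Lambda_n}$ recorded in \cref{subsec:MeasureAssociatedWithYoung}: first, that each $f_{\Lambda_n}$ is Lipschitz with constant $1$, a property inherited by the averaged density $\E f_{\Lambda_n}$; and second, the uniform pointwise bound \eqref{eq:bounds-on-density}, which forces all of these densities to decay at a common rate away from the origin. The weak convergence hypothesis on $(\E P_{\Lambda_n})$ controls integrals against bounded continuous test functions, so the task is to upgrade this measure-level convergence to uniform convergence of the densities themselves.

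First I would fix a point $u_0 \in \R$ and, for small $\epsilon > 0$, choose a mollifier $\phi_\epsilon \geq 0$ supported in the $\epsilon$-ball around $u_0$ with $\int \phi_\epsilon(u) \dif u = 1$. Using the Lipschitz bound on $\E f_{\Lambda_n}$, the pointwise value $\E f_{\Lambda_n}(u_0)$ differs from the smeared quantity $\int \phi_\epsilon(u) \dif \E P_{\Lambda_n}(u)$ by at most $\epsilon$, uniformly in $n$. Since $\phi_\epsilon$ is a bounded continuous test function, the weak convergence hypothesis guarantees that $\int \phi_\epsilon(u) \dif \E P_{\Lambda_n}(u)$ converges as $n\to\infty$. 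Combining these two facts yields $\limsup_{m,n\to\infty} \left| \E f_{\Lambda_n}(u_0) - \E f_{\Lambda_m}(u_0) \right| \leq 2\epsilon$, and letting $\epsilon \to 0$ shows that $\left( \E f_{\Lambda_n}(u_0) \right)_n$ is a Cauchy sequence; its limit defines $f_{\Lambda_\infty}(u_0)$. This establishes pointwise convergence everywhere on $\R$.

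Next I would promote pointwise convergence to uniform convergence on an arbitrary compact interval $K = [-R,R]$. Since all the functions $\E f_{\Lambda_n}$ share the same Lipschitz constant $1$, they form an equicontinuous, pointwise-convergent family, so an Arzel\`a--Ascoli type argument (or a direct $\epsilon$-net estimate exploiting the uniform equicontinuity) upgrades the pointwise limit to uniform convergence on $K$.

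The remaining and genuinely delicate point is to control the behaviour off the compact set, where weak convergence gives no direct information. Here I would invoke the tail bound \eqref{eq:bounds-on-density}: both $\E f_{\Lambda_n}$ and the limit $f_{\Lambda_\infty}$ are squeezed between $0$ and $\frac{2}{\sqrt{u^2+4}+|u|}$, so on $K^c = \R \setminus K$ their supremum-norm distance is at most $\frac{2}{\sqrt{R^2+4}+R}$, a quantity independent of $n$ that tends to $0$ as $R \to \infty$. Choosing $R$ large to make the tail contribution small and then $n$ large to control the compact part yields uniform convergence on all of $\R$. The main obstacle is precisely this last step: weak convergence is a statement about effectively compactly-supported test functions and says nothing about the tails, so the argument hinges on the a priori uniform decay estimate \eqref{eq:bounds-on-density} rather than on the convergence hypothesis itself.
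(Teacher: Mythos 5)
Your proposal is correct and follows essentially the same route as the paper's own proof: the mollifier-plus-Lipschitz estimate to extract pointwise Cauchy convergence and define $f_{\Lambda_\infty}$, equicontinuity (shared Lipschitz constant $1$) to upgrade to uniform convergence on compacts, and the decay bound \eqref{eq:bounds-on-density} to control the tails on $K^c$. The only cosmetic difference is that you phrase the pointwise step as a Cauchy criterion between indices $m,n$ while the paper compares each $\E f_{\Lambda_n}(u_0)$ to the fixed limit integral $\int \phi_\epsilon \dif P_{\Lambda_\infty}$; the substance is identical.
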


\subsection{Convergence of densities, in probability.}
We have proved that the sequence of random variables $(S_k^{[n]})_n$ converges 
(as $n\to\infty$) to $s_k$ in probability; 
in other words for each $\epsilon>0$ and each integer $k\geq 0$
\begin{equation} 
\label{eq:moments-convergence}
\lim_{n\to\infty} \mathbb{P}\left( \left| \int u^{k} \dif P_{\Lambda_n}(u)
-  
\int u^{k} \dif P_{\Lambda_\infty}(u)
\right| >
\epsilon \right) =0;
\end{equation} 
in other words the sequence of random probability measures $P_{\Lambda_n}$ converges to the measure
$P_{\Lambda_\infty}$ \emph{in moments, in probability}.

The weak topology of probability measures
can be metrized, for example by Lévy--Prokhorov distance $\pi$.
Since the measure $P_{\Lambda_\infty}$ is uniquely determined by its moments, 
convergence to $P_{\Lambda_\infty}$ \emph{in moments} implies convergence to the same limit
\emph{in the weak topology of probability measures}.
With the help of the distances $d$ (cf.~\cref{sec:limit-measure-Plambdainfty})
and $\pi$, the latter statement can be rephrased as follows:
for each $\varepsilon>0$
there exists $\delta>0$ such that for any probability measure $\mu$ 
\[  d(\mu, P_{\Lambda_\infty}) < \delta \implies 
\pi(\mu, P_{\Lambda_\infty}) < \varepsilon.
\]
It follows
that the sequence of random probability measures $P_{\Lambda_n}$ converges to the measure
$P_{\Lambda_\infty}$ \emph{in the weak topology of probability measures, in probability}, 
i.e.~for each $\varepsilon>0$
\begin{equation}
\label{eq:Levy}
\lim_{n\to\infty} \mathbb{P}\left( \pi(P_{\Lambda_n}, P_{\Lambda_\infty})  >
\varepsilon \right) =0.  
\end{equation}

\medskip

Let $\epsilon>0$; by adapting the proof of \eqref{eq:Lipschitz-is-good}, we get that
\begin{equation}
\label{eq:weak-conv-implications1}
\left| f_{\Lambda_n}(u_0) - \int\ \phi_\epsilon(u)\ \dif  P_{\Lambda_n}(u) \right| 
\leq \epsilon.  
\end{equation}
Lévy--Prokhorov distance $\pi$ metrizes the weak convergence of probability measures;
it follows that we can choose sufficiently small $\varepsilon>0$ 
with the property that for any probability measure
$\mu$ on $\R$
\begin{equation}
\label{eq:weak-conv-implications2}
\pi(\mu, P_{\Lambda_\infty})\leq \varepsilon \implies 
\left| \int \phi_\epsilon(u) \dif \mu(u)-  \int \phi_\epsilon(u) \dif P_{\Lambda_\infty}(u) \right| <\epsilon.
\end{equation}
Equation \eqref{eq:Levy} combined with \eqref{eq:weak-conv-implications2}
as well as \eqref{eq:weak-conv-implications1}
imply therefore that
\[ \lim_{n\to\infty} 
\mathbb{P} \left( \left| f_{\Lambda_n}(u_0) - \int \phi_\epsilon(u) \dif P_{\Lambda_\infty}(u) \right| > 2\epsilon
\right) = 0.
\]
As $\epsilon\searrow 0$, the integral $\int \phi_\epsilon(u) \dif P_{\Lambda_\infty}(u) $ 
converges to the density $f_{\Lambda_\infty}(u_0)$ by an analogue of \eqref{eq:Lipschitz-is-good}.
In this way we proved that \emph{for each $u_0$ 
	the sequence $f_{\Lambda_n}(u_0)$ converges to
	$f_{\Lambda_\infty}(u_0)$ in probability}.

By the same type of argument as in \eqref{eq:sup-bound}
it follows that 
\emph{the sequence of functions $f_{\Lambda_n}$ converges uniformly to $f_{\Lambda_\infty}$ in probability}, 
i.e.~for each $\epsilon>0$
\[\lim_{n\to\infty} \mathbb{P} \left( \left\| f_{\Lambda_n} - f_{\Lambda_\infty} \right\|_{\infty} > \epsilon \right) = 0 \]
with respect to the supremum norm.

\subsection{Back to Young diagrams.}
The function $\omega_{\Lambda_\infty}$ which was promised in the formulation of \cref{theo:lln}
is simply given by the relationship \eqref{eq:density} for the specific choice of 
$f_{\Lambda}:=f_{\Lambda_\infty}$, namely
\[ \omega_{\Lambda_\infty}(u):= 2 f_{\Lambda_\infty}(u) + |u|.\]

We just finished the proof of
the fact that $\omega_{\Lambda_n}$ converges to
$\omega_{\Lambda_\infty}$ in the supremum norm as $n
\to \infty$, in probability, thus the proof of \cref{theo:lln} is completed.

\section{Central Limit Theorem. Proof of \cref{theo:clt}} 
\label{sec:proof-of-CLTCLT}

\begin{proof}[Proof of \cref{theo:clt}]
	
	In the light of \cref{remark:theo:clt} we shall investigate the cumulants of the form
	\[ \kappa_\ell \left(Y_{i_1}, \dots, Y_{i_\ell}  \right)\]
	for the random variables $(Y_k)$ given by \eqref{eq:my-random-variables-2}.
	
	\bigskip

	We start with the case $\ell\geq 2$. 
	By \eqref{eq:fluctuations-around-infty}, each $Y_k$ is equal (up to a deterministic shift) to
	the random variable
	\[ X_k:= 
	\sqrt{n}\ \frac{k-1}{2} \int  u^{k-2} \ \omega_{\Lambda_n}(u) \dif u =
	n^{-\frac{k-1}{2}} \Sfunct_k(\lambda_n). 
	\]
	For $\ell\geq 2$ the cumulant $\kappa_\ell$ is translation-invariant;
	it follows therefore that
	\begin{equation}
	\label{eq:cumulant-concrete}
	\kappa_\ell \left( Y_{i_1},\dots, Y_{i_\ell} \right)=
	\kappa_\ell \left( X_{i_1},\dots, X_{i_\ell} \right)=
	\kappa_\ell \left( {{n}^{-\frac{i_1-1}{2}}}\ \Sfunct_{i_1}, \dots, 
	{{n}^{-\frac{i_\ell-1}{2}}}\ \Sfunct_{i_\ell}    \right).
	\end{equation}
	
	By \cref{def:approx-factorization-charactersB}\ref{item:afp2} 
	the approximate factorization property of $(\chi_n)$ is equivalent to condition 
	\ref{item:cumulants-classic-polynomial} which we apply in the special case when
	$(x_1,\dots,x_{\ell}):=(\Sfunct_{i_1},\dots,\Sfunct_{i_\ell})$.
	The latter implies  that the right-hand side of 
	\eqref{eq:cumulant-concrete}
	is of order
	$O\left( {n}^{\frac{2-\ell}{2}} \right)$.
	This implies that for each $\ell\geq 3$
	\[ \lim_{n\to\infty} \kappa_\ell \left( Y_{i_1},\dots, Y_{i_\ell} \right) = 0.\]
	
	\smallskip
	
	Consider now the case $\ell=2$. If we adapt the above reasoning, 
	we get that
	the limit
	\[ \lim_{n\to\infty} \kappa_2\left( Y_{i_1}, Y_{i_2} \right)\]
	exists and is finite.
	
	\bigskip
	
	We consider now the case $\ell=1$. 
	By \cref{def:approx-factorization-charactersB}\ref{item:equivalent-enhanced},
	enhanced approximate factorization property implies that for each $k\geq 2$
	there exist constants
	$a_k'', b_k''$ such
	that
	\[ {n^{- \frac{k}{2}}}\ 
	\E \Sfunct_k(\lambda_n) = a_k'' +
	\frac{b_k''+o(1)}{\sqrt{n}}\]
	as $n\to\infty$.
	It follows that the cumulant
	\[
	\kappa_1(Y_k)=
	\E Y_k
	=
	\sqrt{n} \left({n^{- \frac{k}{2}}}\ \E
	\Sfunct_k(\lambda_n) - \lim_{m\to\infty} {m^{- \frac{k}{2}}}\ \E \Sfunct_k(\lambda_m) 
	\right) 
	\]
	converges as $n\to\infty$ to $b_k''$  (given explicitly by \eqref{eq:what-is-b-bis}).

	\bigskip

	Let us summarize the above discussion.
	We have proved that the limit
	\[ \lim_{n\to\infty} \kappa_\ell \left( Y_{i_1},\dots, Y_{i_\ell} \right) \]
	exists and is finite for any choice of $\ell \geq 1$ and $i_1,\dots,i_\ell \geq 2$.
	In other words: the joint distribution of the random variables $(Y_i)$
	converges in moments as $n\to\infty$ to the joint distribution of an abstract family of random
	variables $(Z_i)$ with the property that all cumulants vanish: 
	$\kappa_\ell \left( Z_{i_1},\dots, Z_{i_\ell} \right)=0$, except for $\ell\leq 2$.
	The latter is the defining property of the Gaussian distribution.
	Since the Gaussian distribution is uniquely determined by its moments,
	it follows that $(Y_i)$ converges to $(Z_i)$ not only in moments
	but also in the weak topology of probability measures, as required.
\end{proof}

\section*{Acknowledgments}

Research supported by \emph{Narodowe Centrum Nauki}, grant number \linebreak 2014/15/B/ST1/00064.

\bibliographystyle{alpha}
\bibliography{biblio2015}

\end{document}